\documentclass[12pt]{scrartcl}
\usepackage[english]{babel}
\usepackage{amsmath}
\usepackage{amsthm}
\usepackage{txfonts}
\usepackage{algorithmic}
\usepackage{tikz}
\usepackage{mathtools}
\usepackage{subcaption}
\usepackage{slashbox}
\usetikzlibrary{calc,decorations.pathreplacing}

\theoremstyle{definition}
\newtheorem{alg}{Algorithm}[section]
\newtheorem{df}[alg]{Definition}
\newtheorem{ex}[alg]{Example}
\theoremstyle{remark}
\newtheorem*{rem}{Remark}
\theoremstyle{plain}
\newtheorem{lma}[alg]{Lemma}
\newtheorem{thm}[alg]{Theorem}
\newtheorem{prp}[alg]{Proposition}
\newtheorem{crl}[alg]{Corollary}

\DeclareMathOperator \xx {\pmb{\mathrm x}}
\DeclareMathOperator \yy {\pmb{\mathrm y}}
\DeclareMathOperator \XX {\pmb{\mathrm X}}
\DeclareMathOperator \YY {\pmb{\mathrm Y}}
\DeclareMathOperator*\tcup {{\textstyle\bigcup}}
\DeclareMathOperator \minset {Min_\subseteq}
\DeclareMathOperator \Dist {Dist}
\DeclareMathOperator \bisect {bisect}
\DeclareMathOperator \child {child}
\DeclareMathOperator \midp {mid}
\DeclareMathOperator \size {size}
\DeclareMathOperator \ptb {Ptb}

\newcommand \mathbox [2][2em]{\makebox[#1]{$\displaystyle #2$}}
\newlength \relwidth \settowidth \relwidth {\ensuremath{=}}
\newcommand \Stackrel [2]{\mathrel{\mathbox[\relwidth]{\stackrel{#1}{#2}}}}
\newcommand \mbigcup [1][]{\mathbox{\bigcup_{#1}}}
\newlength \Omegaheight \settoheight \Omegaheight {\ensuremath{\Omega}}
\newcommand \barOmega {\overline{\rule{0pt}{.875\Omegaheight}\smash\Omega}}
\newcommand \sei \coloneqq
\newcommand \N {\mathcal N}
\newcommand \AR {\mathcal{AR}}
\newcommand \hsk {\mathrm{hSk}}
\newcommand \vsk {\mathrm{vSk}}
\newcommand \sk {\mathrm{Sk}}
\newcommand \raiseqed {\\[-3.25em]}

\newcommand \pq [3][p,q]{#2^{#1}(#3)}
\newcommand \clos [1][p,q]{\operatorname{clos}^{#1}}
\newcommand \D [1][p,q]{\operatorname{\mathbf D}^{#1}}
\newcommand \U [2][p,q]{\overline U^{#1}(#2)}
\newcommand \ho {_{\mathrm{ho}}}
\newcommand \refine [1][p,q]{\operatorname{ref}^{#1}}
\newcommand \M {\mathcal M}
\newcommand \KM {\tilde K}
\newcommand \G {\mathcal G}
\newcommand \Guni [1]{\mathcal G_{u\mid #1}}
\newcommand \A [1][p,q]{\mathbb A^{#1}}
\newcommand \ext [1][p,q]{\operatorname{ext}^{#1}}
\newcommand \ceilfrac [2] {\bigl\lceil\tfrac{#1}{#2}\bigr\rceil}
\newcommand \floorfrac [2] {\bigl\lfloor\tfrac{#1}{#2}\bigr\rfloor}
\newcommand \sqr[1]{\negmedspace\sqrt{#1}}
\newcommand \Mtilde {\overset{\mspace{9mu}\textstyle\sim}{\smash{\M}\rule{0pt}{.8ex}}}

\newcounter{num}
\numberwithin{num}{alg}
\renewcommand{\thenum}{\arabic{num}}
\newcommand{\pnumpx}[1][]{\refstepcounter{num}\noindent\textbf{(\thenum)}\enspace\emph{#1}}

\newcommand{\numref}[1]{\textbf{(\ref{#1})}}

\title{Analysis-suitable adaptive T-mesh refinement with linear complexity}
\author{Philipp Morgenstern\footnote{Institute for Numerical Simulation, Rheinische Friedrich-Wilhelms-Universit\"at Bonn -- \texttt{morgenstern@ins.uni.bonn.de}}\enspace and Daniel Peterseim\footnote{Institute for Numerical Simulation, Rheinische Friedrich-Wilhelms-Universit\"at Bonn -- \texttt{peterseim@ins.uni.bonn.de}}}

\begin{document}
\maketitle
\begin{abstract}
We present an efficient adaptive refinement procedure that preserves analysis-suitability of the T-mesh, this is, the linear independence of the T-spline blending functions. We prove analysis-suitability of the overlays and boundedness of their cardinalities, nestedness of the generated T-spline spaces, and linear computational complexity of the refinement procedure in terms of the number of  marked and generated mesh elements. 
\end{abstract}

\textbf{Keywords:} 
Isogeometric Analysis, T-Splines, Analysis-Suitability, Nestedness, Adaptive mesh refinement

\section{Introduction}
T-splines \cite{Tsplines-birth} have been introduced as a free-form geometric technology
and are one of the most promising features in the Isogeometric Analysis (IGA) framework introduced by Hughes, Cottrell and Basilevs \cite{IGA-birth,IGA-longbirth}.
At present, the main interest in IGA is in finding discrete function spaces that integrate well into CAD applications and, at the same time, can be used for  Finite Element Analysis. 
Throughout the last years, hierarchical B-Splines \cite{Spline-forests, IGA-with-hier-Splines} and LR-Splines \cite{LR-Splines, IGA-with-LR-Splines} have arisen as alternative approaches to T-Splines  for the establishment of an adaptive B-Spline technology. While none of these strategies has outperformed the other competing approaches until today, this paper aims to push forward and motivate the T-Spline technology.

Since T-splines can be locally refined \cite{Tspline-refinement1}, they potentially link the powerful geometric concept of Non-Uniform Rational B-Splines (NURBS) to meshes with T-junctions (referred as ``hanging nodes'' in the Finite Element context) and, hence, the well-established framework of adaptive mesh refinement. 
However, in \cite{particular-Tmeshes},  it was shown that T-meshes can induce linear dependent T-spline blending functions. This prohibits the use of T-splines as a basis for analytical purposes such as solving a partial differential equation. In particular, the mesh refinement algorithm presented in \cite{Tspline-refinement1} does not preserve analysis-suitability in general. 
This insight motivated the research on T-meshes that guarantee the linear independence of the corresponding T-spline blending functions, referred to as \emph{analysis-suitable T-meshes}.
Analysis-suitability has been characterized in terms of topological mesh properties in $2d$ \cite{lin-ind-Tsplines} and, in an alternative approach, through the equivalent concept of Dual-Compatibility \cite{AS-Tsplines-are-DC}, which allows for generalization to three-dimensional meshes.

A refinement procedure that preserves the analysis-suitability of two-dimensional T-meshes was finally presented in \cite{Tspline-refinement2}. 
The procedure first refines the marked elements, producing a mesh that is not analysis-suitable in general, and then  computes a refinement which is analysis-suitable and generates a T-spline space that is a superspace of the previous one. 
This second refinement involves heuristic local estimates on how much refinement is needed to achieve the desired properties. 
Hence, the reliable theoretical analysis of the algorithm is very difficult and so is the analysis of corresponding automatic mesh refinement algorithms driven by a posteriori error estimators. Such analysis is currently available only for triangular meshes \cite{Axioms-of-Adaptivity,CKNS,Stevenson}, but is necessary to reliably point out the advantages of adaptive mesh refinement.

In this paper, we present a new refinement algorithm which provides 
\begin{enumerate}
\item the preservation of analysis-suitability and nestedness of the generated T-spline spaces,
\item a bounded cardinality of the overlay (which is the coarsest common refinement of two meshes),
\item linear computational complexity of the refinement procedure in the sense that there is a constant bound, depending only on the polynomial  degree of the T-spline blending functions, on the ratio between the number of generated elements in the fine mesh and the number of marked elements in all refinement steps.
\end{enumerate}

This paper is organized as follows. We define the refinement algorithm along with a class of admissible meshes in Section~\ref{sec: refinement}. In Section~\ref{sec: AS}, we prove that all admissible meshes are analysis-suitable. Section~\ref{sec: overlay} proves essential properties of the overlay of two admissible meshes, and in Section~\ref{sec: nestedness} we prove nestedness of the T-spline spaces corresponding to admissible refinements. 
Section~\ref{sec: complexity} shows linear complexity of the refinement procedure, and conclusions and an outlook to future work are finally given in Section~\ref{sec: conclusions}.
The Sections \ref{sec: AS}, \ref{sec: overlay} and \ref{sec: complexity} independently rely on the definitions and results of Section~\ref{sec: refinement}, Section~\ref{sec: nestedness} also makes use of the definitions from Section~\ref{sec: overlay}.

\section{Adaptive mesh refinement}\label{sec: refinement}
This section defines the new refinement algorithm and characterizes the class of meshes which is generated by this algorithm. The initial mesh is assumed to have a very simple structure. In the context of IGA, the partitioned rectangular domain is referred to as \emph{index domain}. This is, we assume that the \emph{physical domain} (on which, e.g., a PDE is to be solved) is obtained by a continuous map from the active region (cf. Section~\ref{sec: AS}), which is a subset of the index domain. Throughout this paper, we focus on the mesh refinement only, and therefore we will only consider the index domain. For the parametrization and refinement of the T-spline blending functions, we refer to \cite{Tspline-refinement2}.
\begin{df}[Initial mesh, element]
Given positive numbers $M,N\in\mathbb N$, the initial mesh $\G_0$ is a tensor product mesh consisting of closed squares (also denoted \emph{elements}) with side length 1, i.e.,
\[\G_0\sei\Bigl\{[m-1,m]\times[n-1,n]\mid m\in\{1,\dots,M\},n\in\{1,\dots,N\}\Bigr\}.\]
The domain partitioned by $\G_0$ is denoted by $\barOmega\sei\tcup\G_0$.
\end{df}

The key property of the refinement algorithm will be that refinement of an element $K$ is allowed only if elements in a certain neighbourhood are sufficiently fine. The size of this neighbourhood, which is denoted \emph{$(p,q)$-patch} and defined through the definitions below, depends on the size of $K$ and the polynomial bi-degree $(p,q)$ of the T-spline blending functions. 

\begin{df}[Level]
The \emph{level} of an element $K$ is defined by \[\ell(K)\sei-\log_2|K|,\] where $|K|$ denotes the volume of $K$.
This implies that all elements of the initial mesh have level zero and that the bisection of an element $K$ yields two elements of level $\ell(K)+1$.
\end{df}

\begin{df}[Vector-valued distance]\label{df: distance}
Given $x\in\barOmega$ and an element $K$, we define their distance 
as the componentwise absolute value of the  difference between $x$ and the midpoint of $K$,
\[\Dist(K,x)\sei\operatorname{abs}\bigl(\midp(K)-x\bigr)\ \in\mathbb R^2.\]
For two elements $K_1,K_2$, we define the shorthand notation 
\[\Dist(K_1,K_2)\sei\operatorname{abs}\bigl(\midp(K_1)-\midp(K_2)\bigr).\]
\end{df}

\begin{df}\label{df: magic patch}
Given an element $K$ and polynomial degrees $p$ and $q$, the $(p,q)$-patch is defined by 
\[\pq\G K \sei \bigl\{K'\in\G\mid\Dist(K',K)\le\D(\ell(K))\bigr\},\]
where \[\D(k)=\begin{cases}
2^{-k/2}\left(\floorfrac p2+\tfrac12,\,\ceilfrac q2+\tfrac12\right)&\text{if $k$ is even,}\\
2^{-(k+1)/2}\left(\ceilfrac p2+\tfrac12,\,2\floorfrac q2+1\right)&\text{if $k$ is odd.}
\end{cases}\]
Note as a technical detail that this definition does \emph{not} require that $K\in\G$.
\end{df}
\begin{rem}
In a uniform even-leveled mesh, $\tcup\pq\G K$ is  obtained by extending $K$ by a face extension length (cf. Definition~\ref{df: TJ-extensions}) above and below and by an edge extension length to the left and to the right. 
In a uniform odd-leveled mesh, $\tcup\pq\G K$ is obtained by extending $K$ by a face extension length to the left and to the right and by an edge extension length above and below. 
The $(p,q)$-patch will be used to enforce a local quasi-uniformity of the mesh.
Throughout the rest of this paper, we assume $p,q\ge2$. This guarantees that neighboring elements of $K$ (elements that share an edge or vertex with $K$) are always in $\pq\G K$, and that nested elements $\check K\subseteq \hat K$ have nested $(p,q)$-patches $\pq\G {\check K}\subseteq\pq\G {\hat K}$.
\end{rem}

\begin{figure}[ht]
\centering
\includegraphics[width=.3\textwidth]{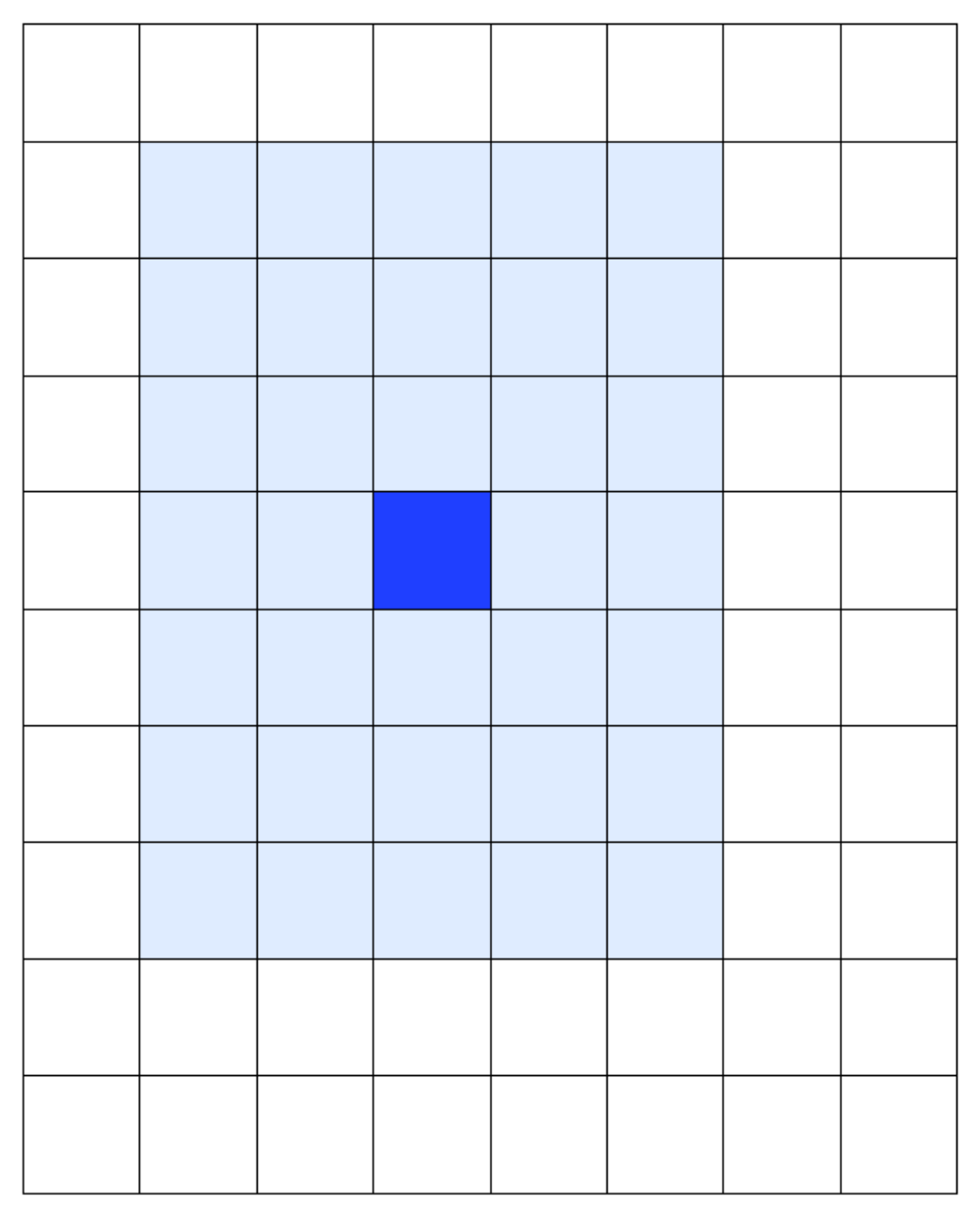}
\hspace{.12\textwidth}
\includegraphics[width=.3\textwidth]{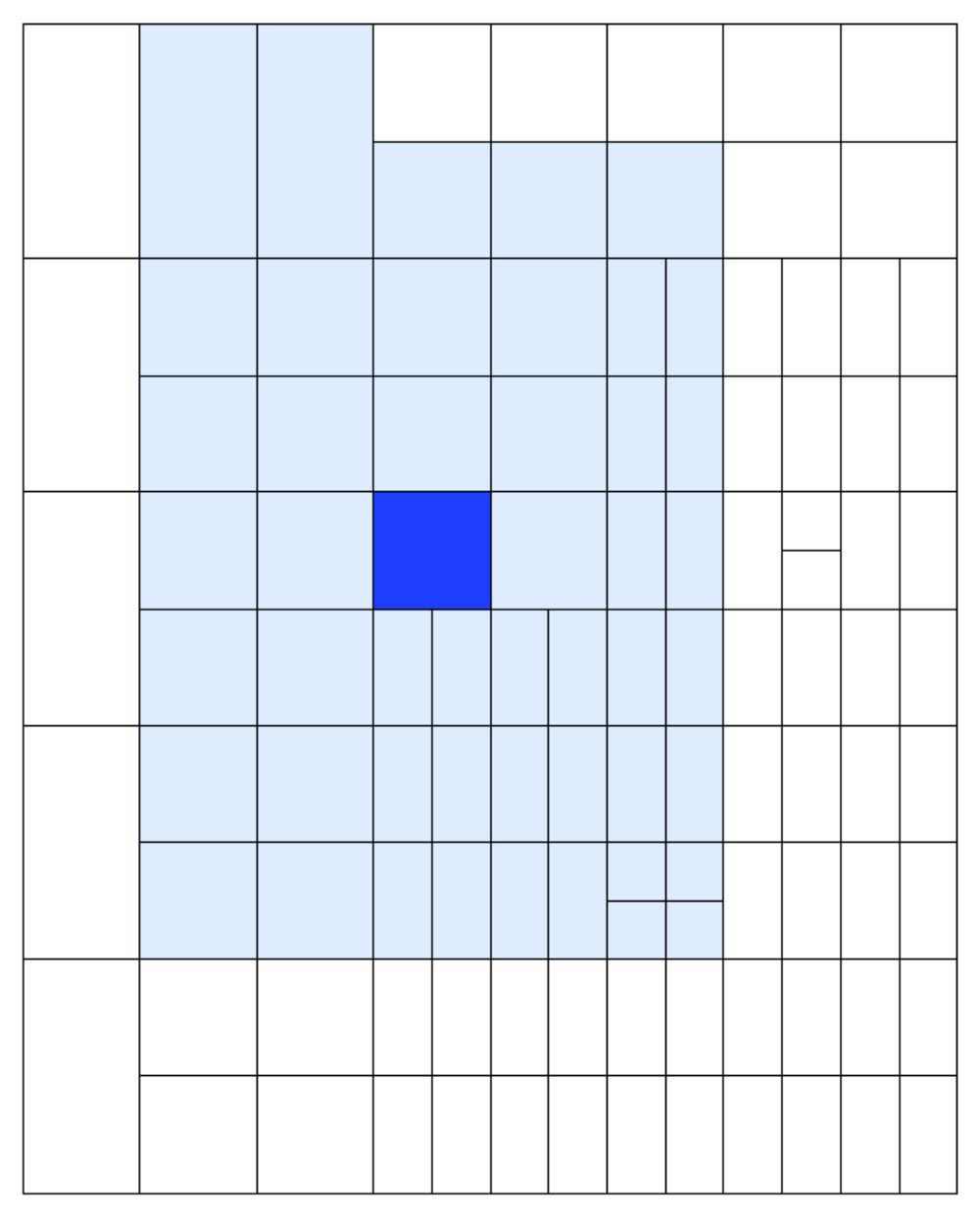}
\caption{Example of the patch $\pq\G K$ in a uniform mesh and in a non-uniform mesh for even $\ell(K)$ and $p=q=5$. $K$ is marked in blue, and $\pq\G K$ is highlighted in light blue.}
\label{fig: magic patch}
\end{figure}

In the subsequent definitions, we will give a detailed description of the elementary bisection steps and then present the new refinement algorithm. 

\begin{df}[Bisection of an element]
Given an arbitrary element $K=[\mu,\mu+\tilde\mu]\times[\nu,\nu+\tilde\nu]$, where $\mu, \nu, \tilde\mu,\tilde\nu\in\mathbb{R}$ and $\tilde\mu,\tilde\nu>0$, we define the operators
\begin{align*}
\bisect_\mathrm x(K) &\sei \bigl\{\,[\mu,\mu+\tfrac{\tilde\mu}2]\times[\nu,\nu+\tilde\nu],\enspace[\mu+\tfrac{\tilde\mu}2,\mu+\tilde\mu]\times[\nu,\nu+\tilde\nu]\,\bigr\}
\\\text{and}\enspace
\enspace\bisect_\mathrm y(K) &\sei \bigl\{\,[\mu,\mu+\tilde\mu]\times[\nu,\nu+\tfrac{\tilde\nu}2],\enspace[\mu,\mu+\tilde\mu]\times[\nu+\tfrac{\tilde\nu}2,\nu+\tilde\nu]\,\bigr\}.
\end{align*}
Note that $\bisect_\mathrm x$ adds an edge in $y$-direction, while $\bisect_\mathrm y$ adds an edge in $x$-direction. \end{df}
\begin{df}[Bisection]\label{df: bisection}%
Given a mesh $\G$ and an element $K\in\G$, we denote by $\bisect(\G,K)$ the mesh that results from a level-dependent bisection of $K$,
\begin{align*}
\bisect(\G,K)&\sei\G\setminus\{K\}\cup\child(K),\\
\text{with}\enspace\child(K)&\sei
\begin{cases}\bisect_\mathrm x(K)&\text{if $\ell(K)$ is even,}\\\bisect_\mathrm y(K)&\text{if $\ell(K)$ is odd.}\end{cases}
\end{align*}
\end{df}
\begin{df}[Multiple bisections]
We introduce the shorthand notation $\bisect(\G,\M)$ for the bisection of several elements $\M=\{K_1,\dots,K_J\}\subseteq\G$, defined by successive bisections in an arbitrary order,
\[\bisect(\G,\M)\sei\bisect(\bisect(\dots\bisect(\G,K_1),\dots),K_J).\]
\end{df}

We will now define the new refinement algorithm through the bisection of a superset $\clos_\G(\M)$ of the marked elements $\M$. In the remaining part of this section, we characterize the class of meshes generated by this refinement algorithm.

\begin{alg}[Closure]\label{alg: closure}
Given a mesh $\G$ and a set of marked elements $\M\subseteq\G$ to be bisected, the \emph{closure} $\clos_\G(\M)$ of $\M$ is computed as follows.
\begin{algorithmic}
\STATE $\Mtilde\sei \M$
\REPEAT
\FORALL{$K\in\Mtilde$}
\STATE $\Mtilde\sei\Mtilde\cup\bigl\{K'\in\pq\G K\mid \ell(K')<\ell(K)\bigr\}$
\ENDFOR
\UNTIL{$\Mtilde$ stops growing}
\RETURN $\clos_\G(\M)=\Mtilde$
\end{algorithmic}
\end{alg}

\begin{alg}[Refinement]\label{alg: refinement}

Given a mesh $\G$ and a set of marked elements $\M\subseteq\G$ to be bisected, $\refine(\G,\M)$ is defined by \[\refine(\G,\M)\sei\bisect(\G,\clos_\G(\M)).\]
\end{alg}
\begin{ex}
The Figures \ref{fig: refinement example 1}, \ref{fig: refinement example 2} and \ref{fig: refinement example 3}
illustrate three successive applications of Algorithm~\ref{alg: refinement} with $p=q=3$. In each case, only one element $K$ is marked. In the first case, the patch of $K$ is as fine as $K$ and hence no additional refinement is necessary. In the second case, one additional iteration of Algorithm~\ref{alg: closure} is needed to compute $\clos_\G(\{K\})$. In the third case, the Algorithm stops after three iterations.
\end{ex}
\begin{figure}
\centering
\includegraphics[width=.17\textwidth]{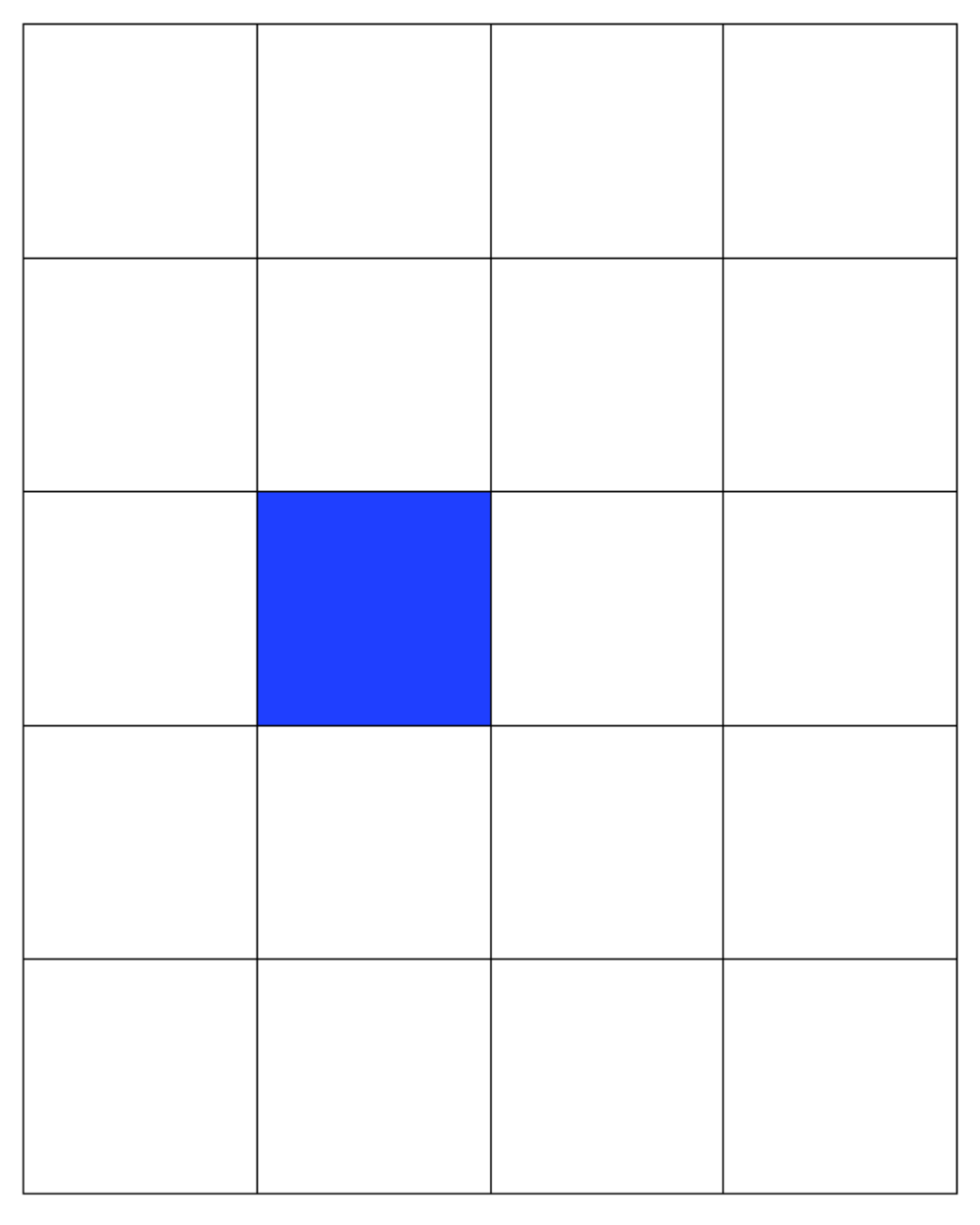}\raisebox{.10\textwidth}{\enspace$\rightarrow$\enspace}
\includegraphics[width=.17\textwidth]{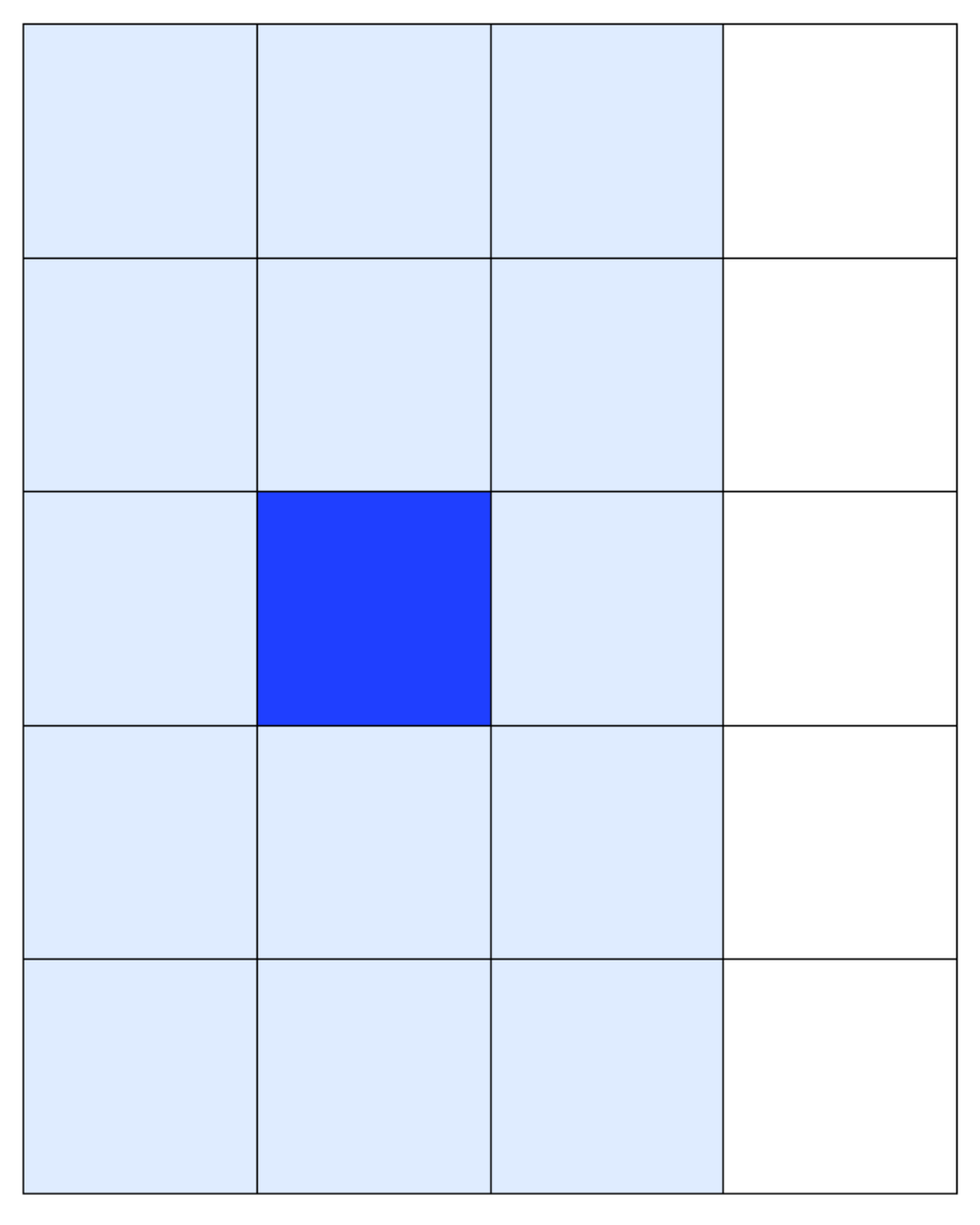}\raisebox{.10\textwidth}{\enspace$\rightarrow$\enspace}
\includegraphics[width=.17\textwidth]{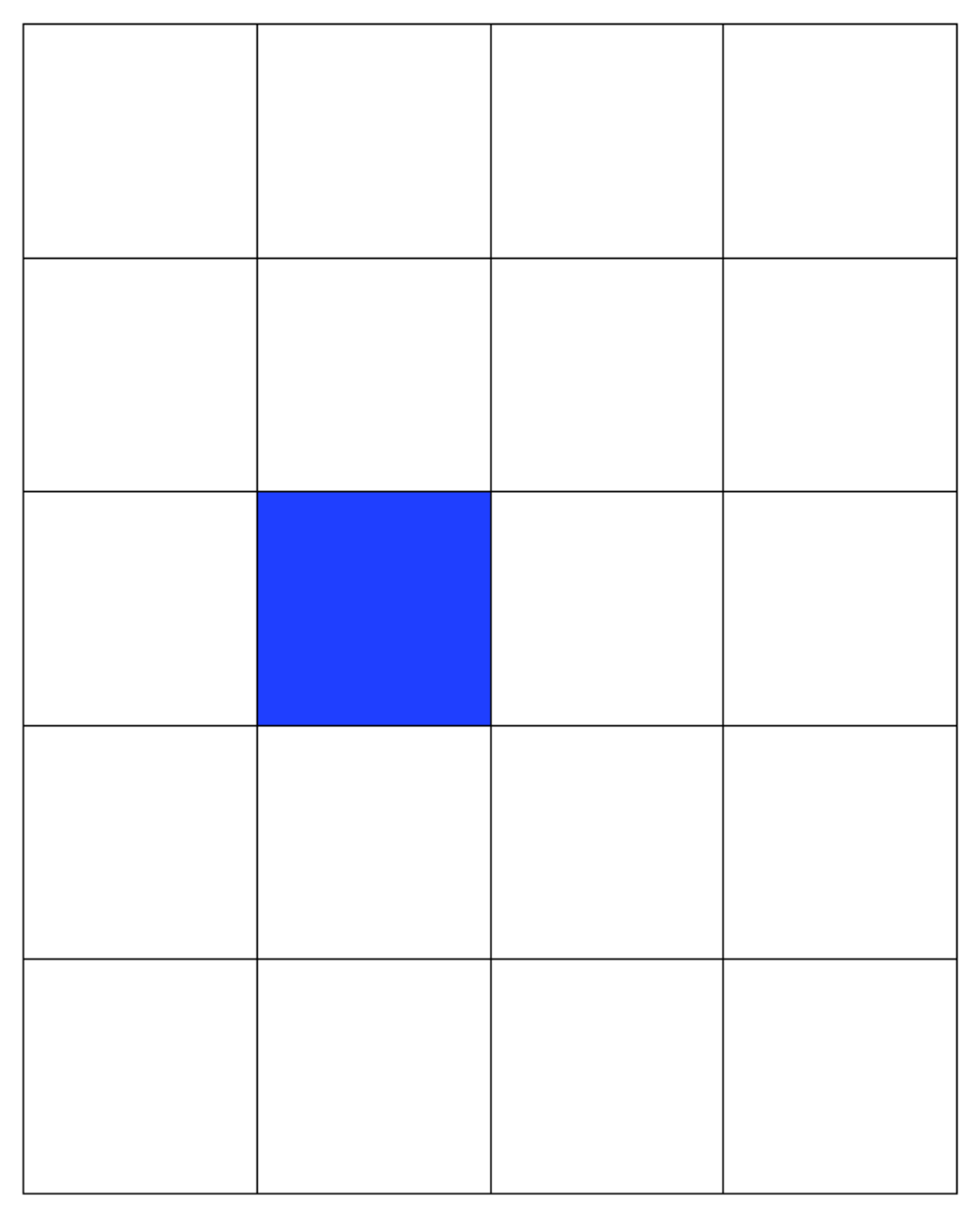}\raisebox{.10\textwidth}{\enspace$\rightarrow$\enspace}
\includegraphics[width=.17\textwidth]{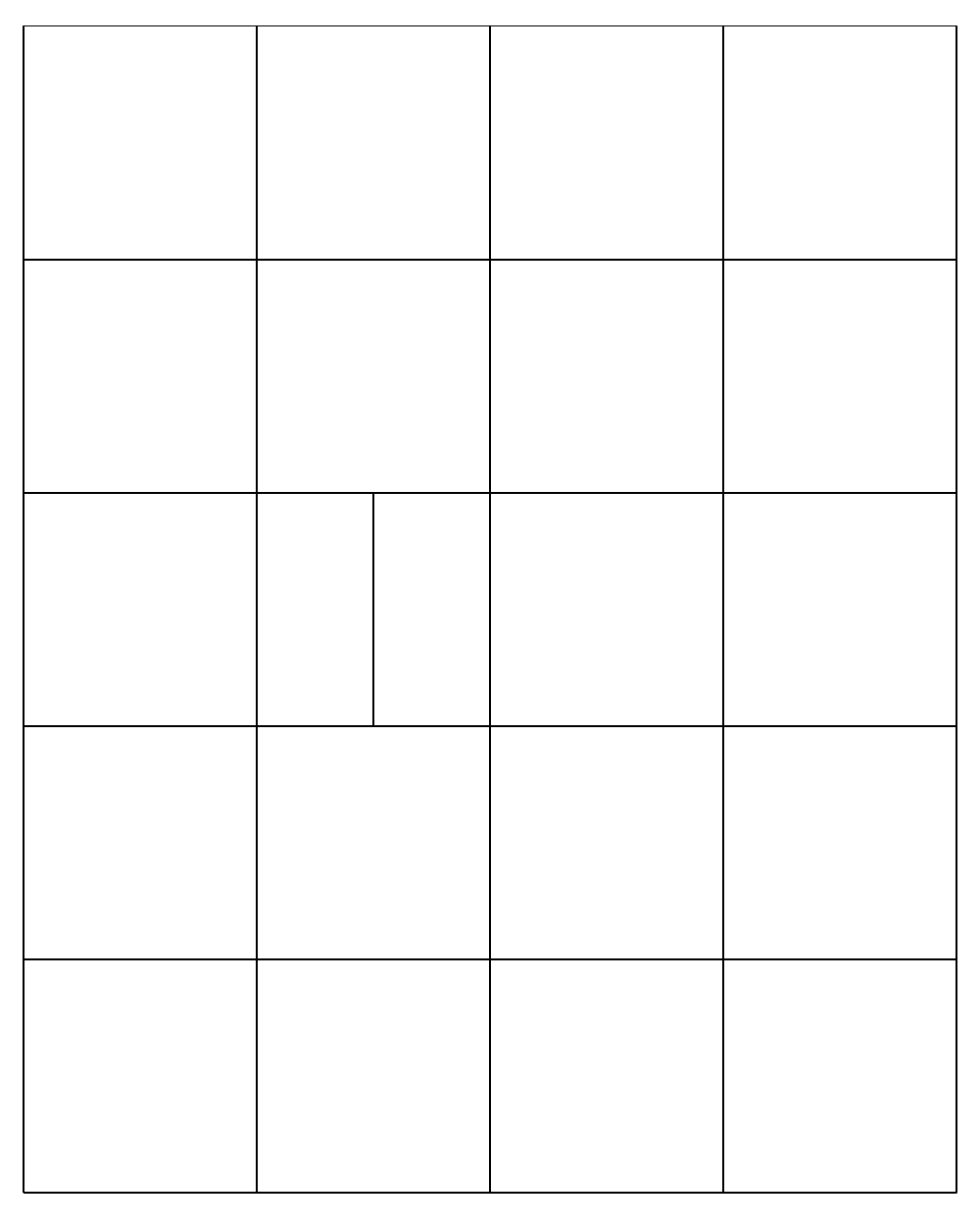}
\caption{First refinement example. The patch $\pq\G{K}$ (highlighted in light blue) is as fine as $K$. Consequently, Algorithm~\ref{alg: closure} stops after the first iteration.}
\label{fig: refinement example 1}
\end{figure}
\begin{figure}
\centering
\includegraphics[width=.17\textwidth]{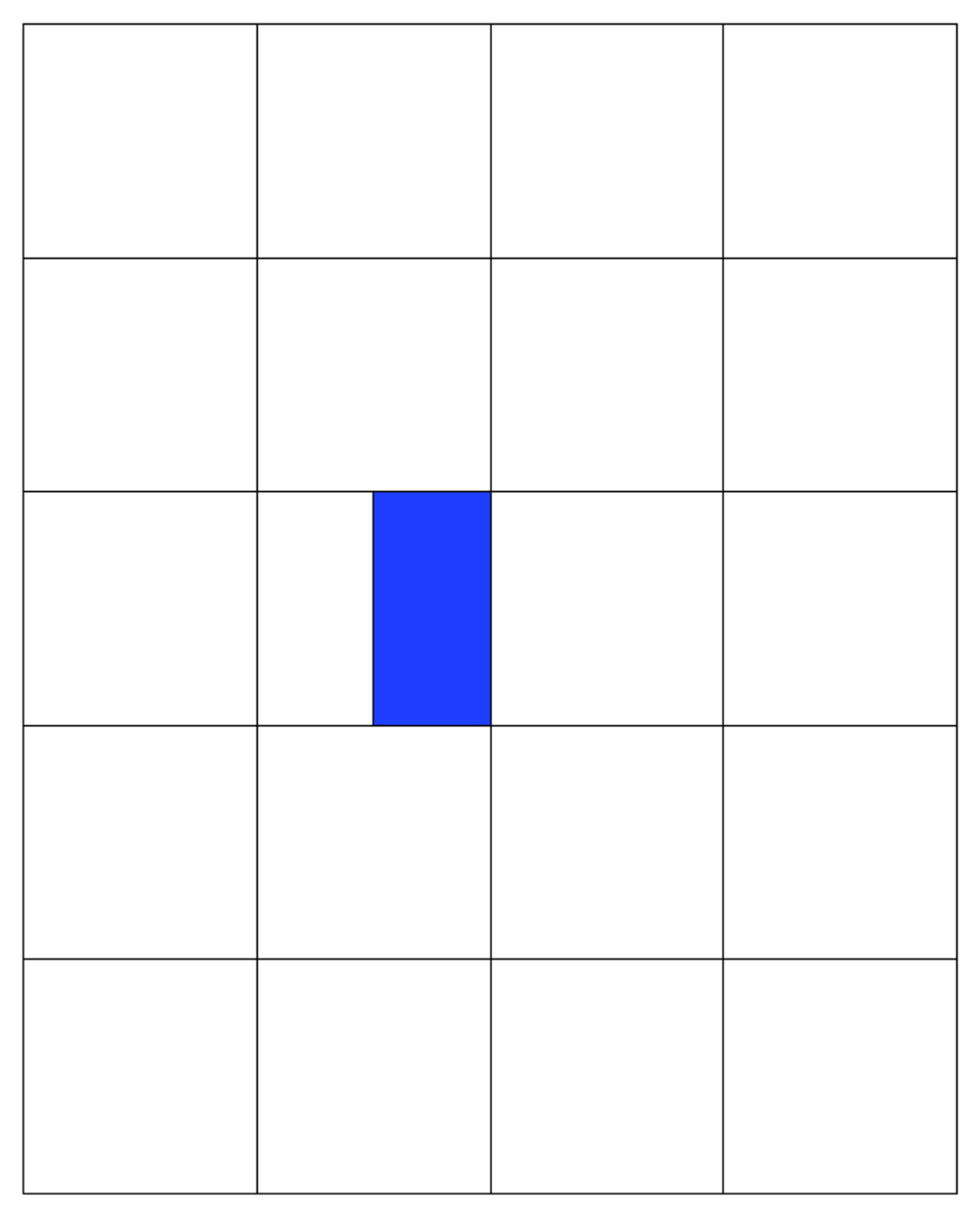}\raisebox{.10\textwidth}{\enspace$\rightarrow$\enspace}
\includegraphics[width=.17\textwidth]{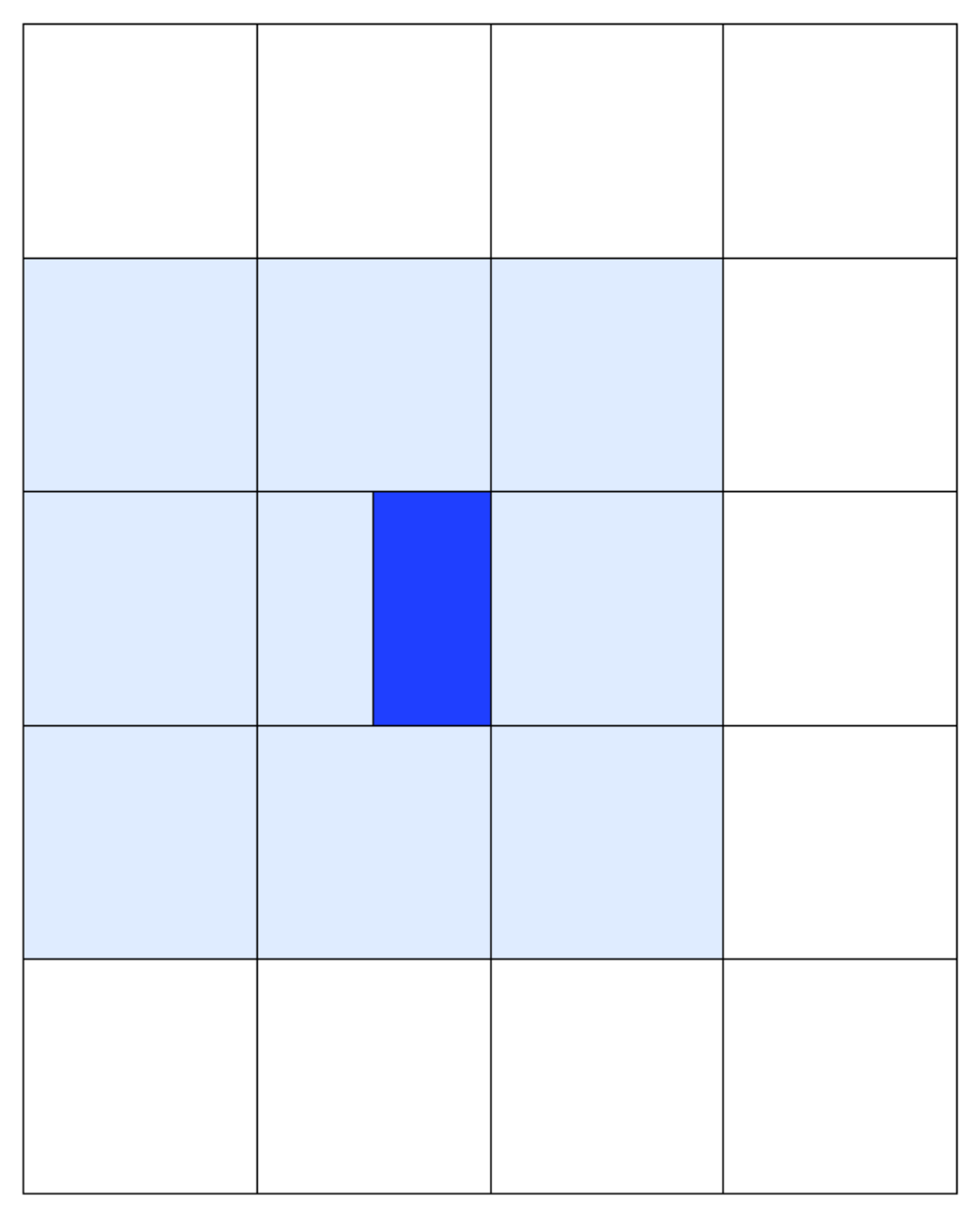}\raisebox{.10\textwidth}{\enspace$\rightarrow$\enspace}
\includegraphics[width=.17\textwidth]{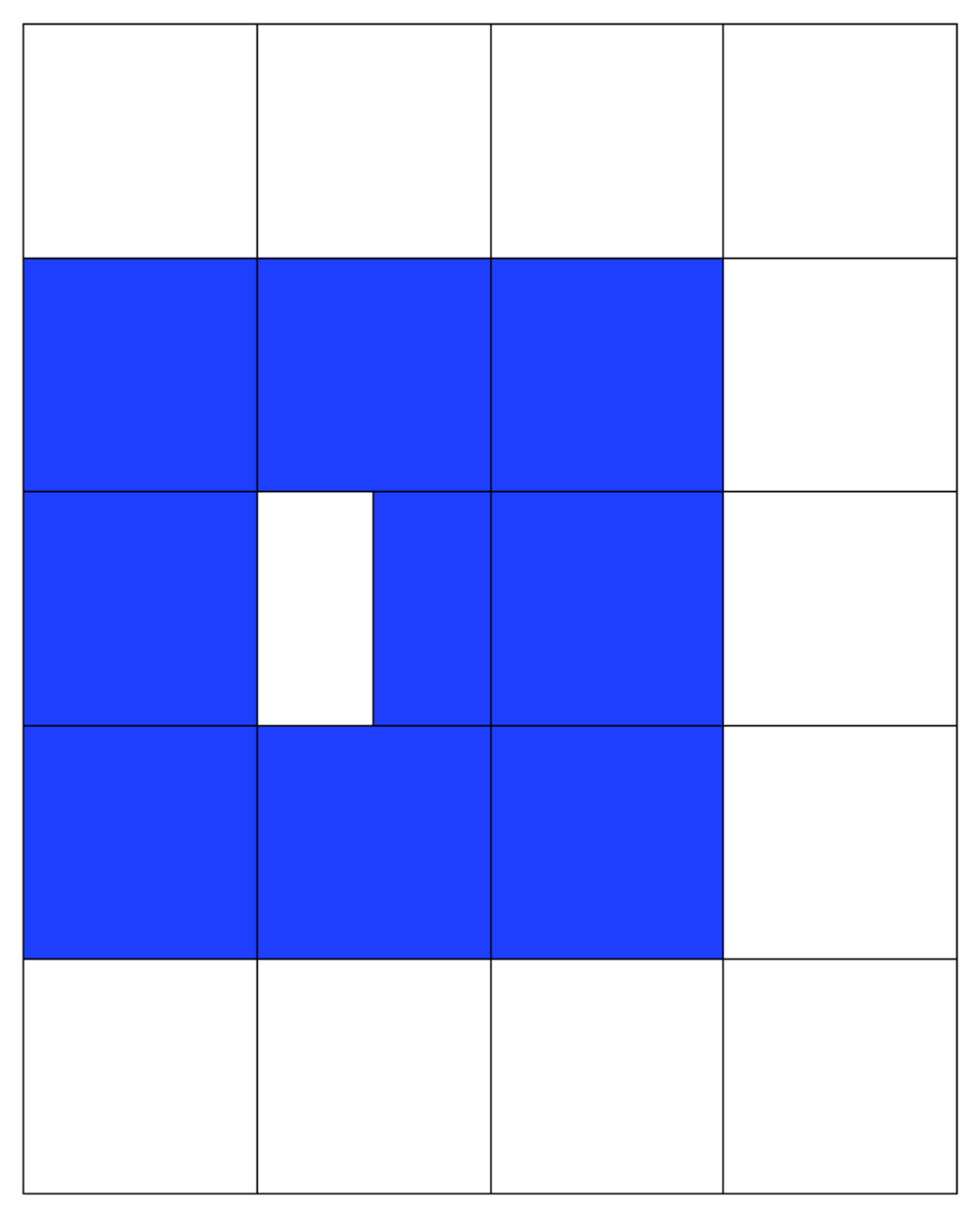}\\[1.5ex]\raisebox{.10\textwidth}{\enspace$\rightarrow$\enspace}
\includegraphics[width=.17\textwidth]{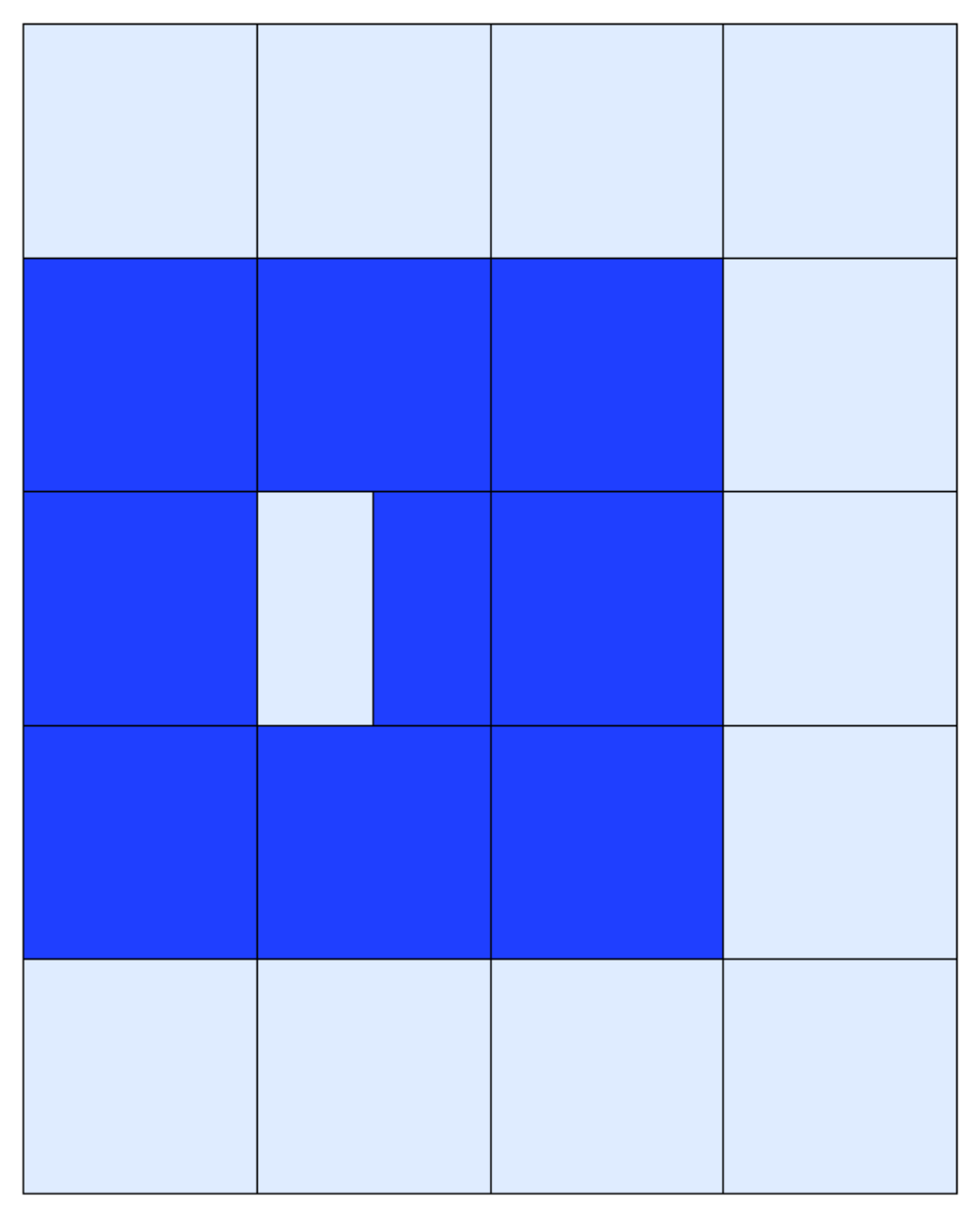}\raisebox{.10\textwidth}{\enspace$\rightarrow$\enspace}
\includegraphics[width=.17\textwidth]{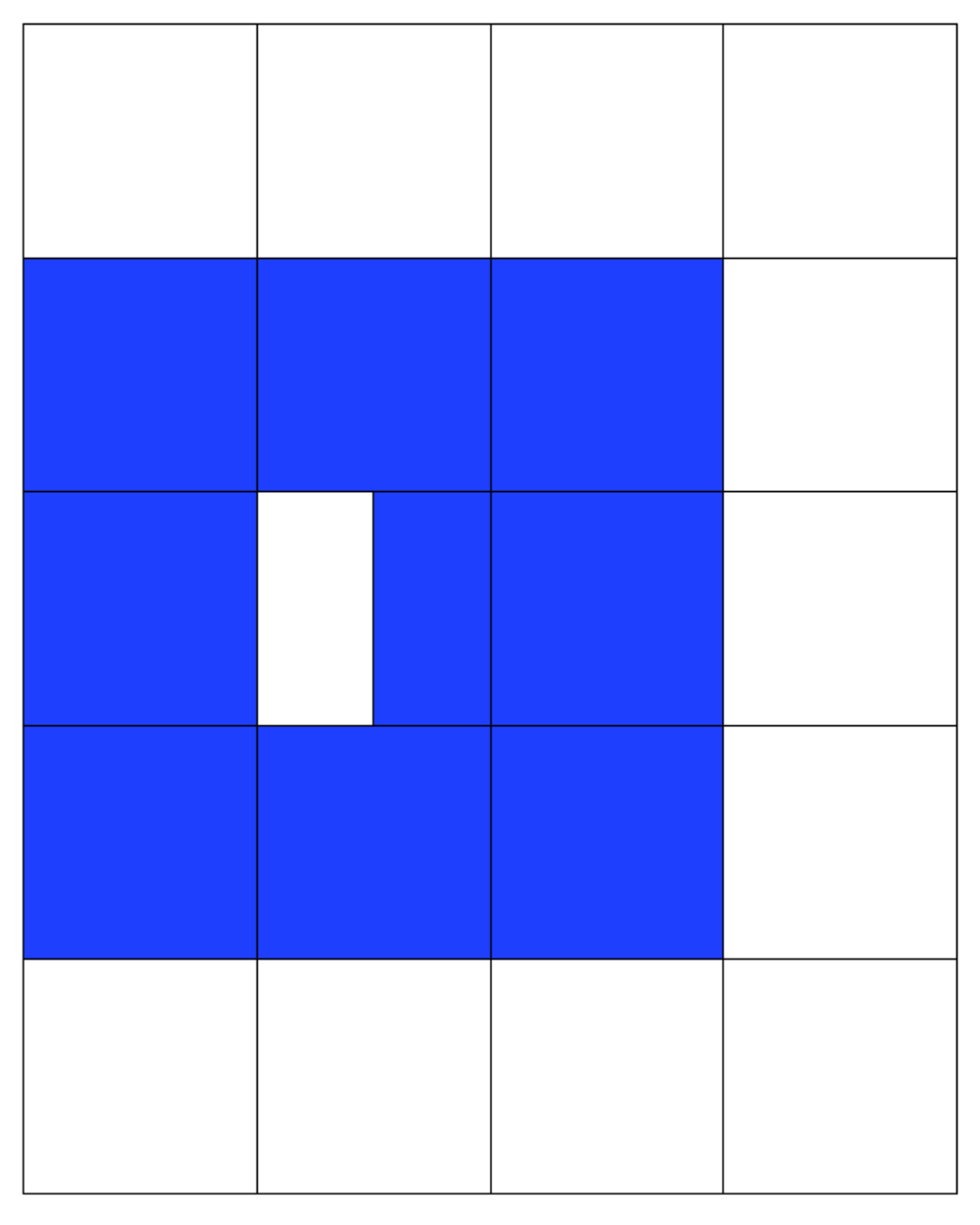}\raisebox{.10\textwidth}{\enspace$\rightarrow$\enspace}
\includegraphics[width=.17\textwidth]{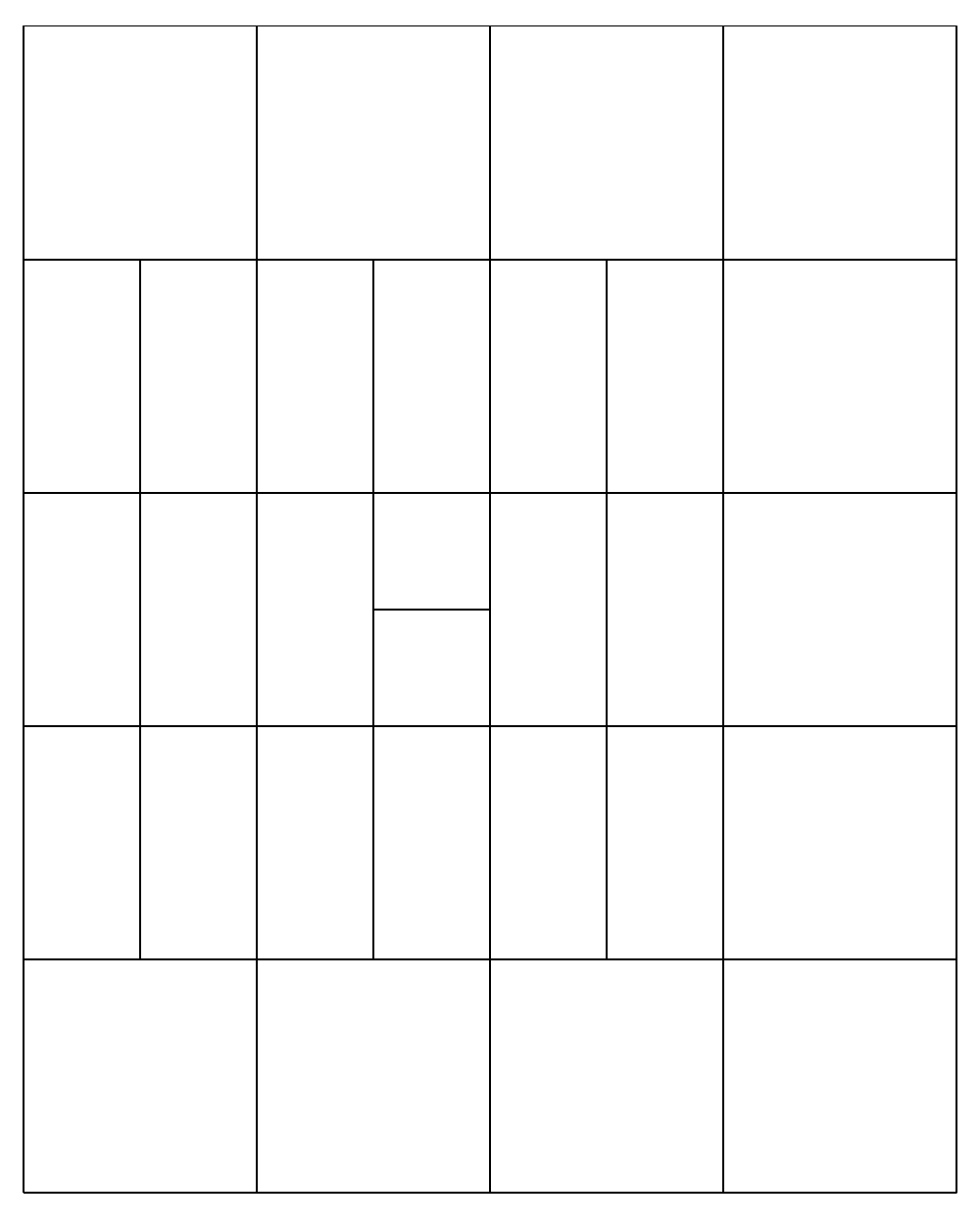}
\caption{Second refinement example. The patch $\pq\G{K}$ contains elements that are coarser than $K$. These are marked by Algorithm~\ref{alg: closure}. Then the algorithm checks their patches for even coarser elements, which do not exist. Hence Algorithm~\ref{alg: closure} stops after two iterations.}
\label{fig: refinement example 2}
\end{figure}
\begin{figure}
\centering
\includegraphics[width=.17\textwidth]{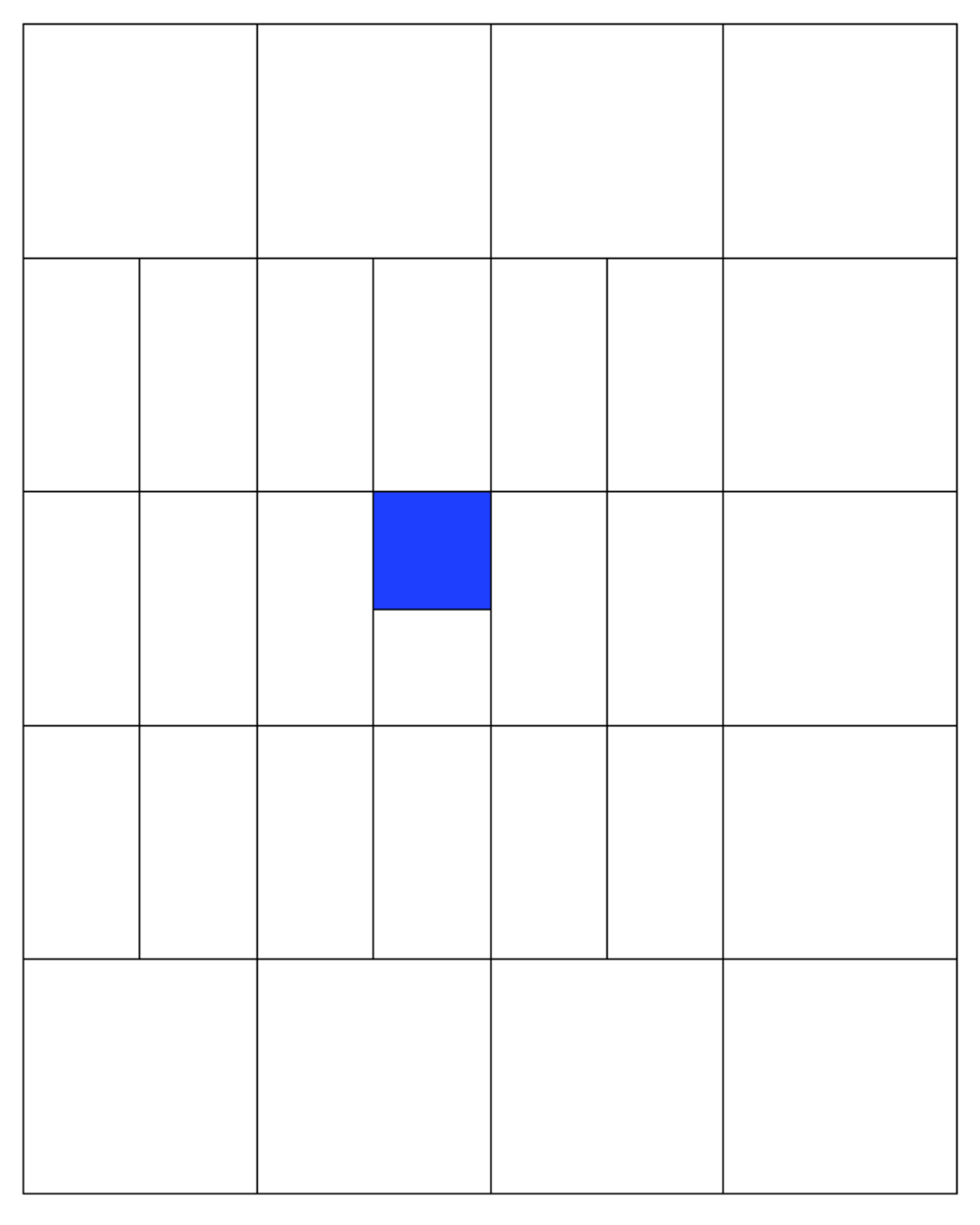}\raisebox{.10\textwidth}{\enspace$\rightarrow$\enspace}
\includegraphics[width=.17\textwidth]{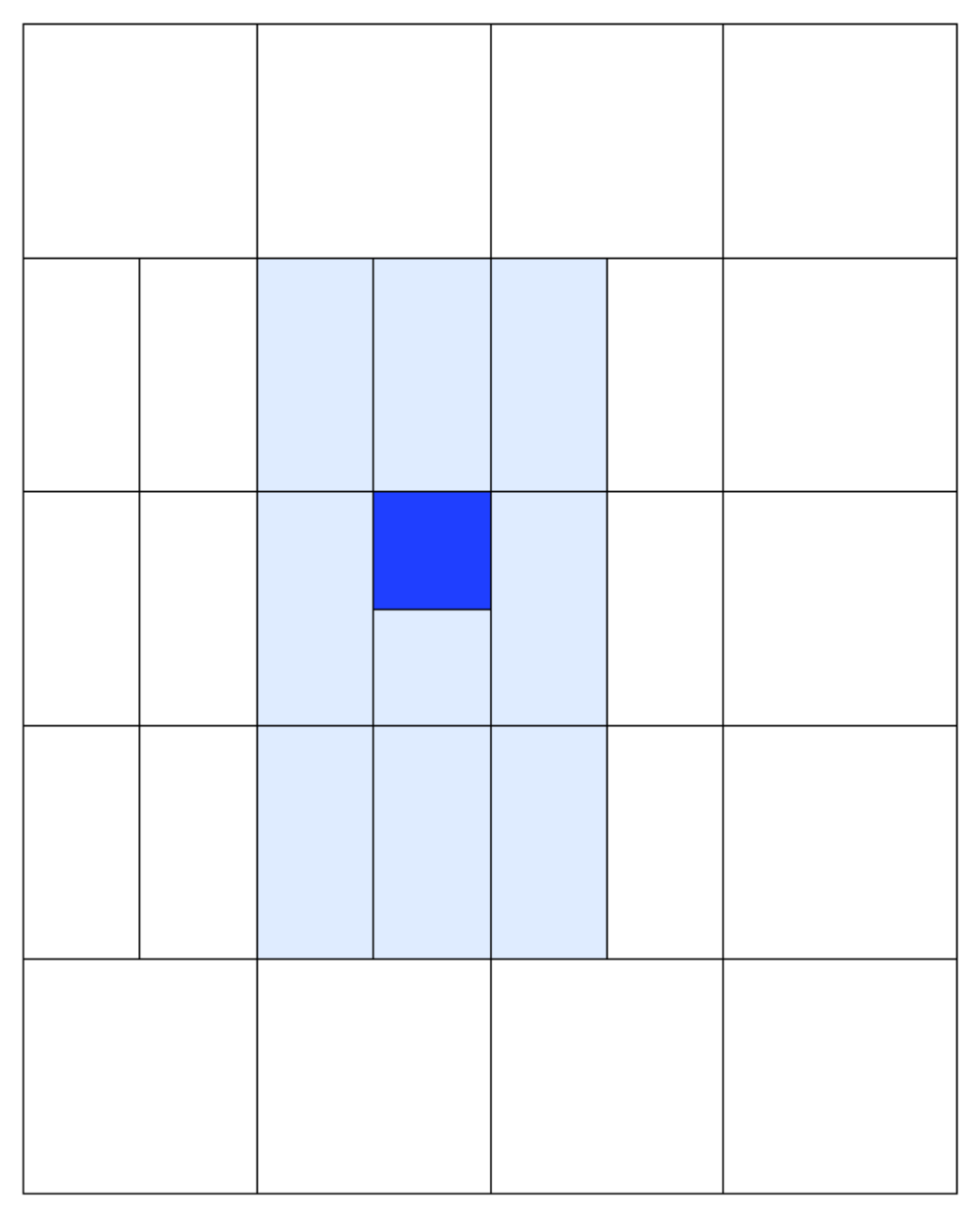}\raisebox{.10\textwidth}{\enspace$\rightarrow$\enspace}
\includegraphics[width=.17\textwidth]{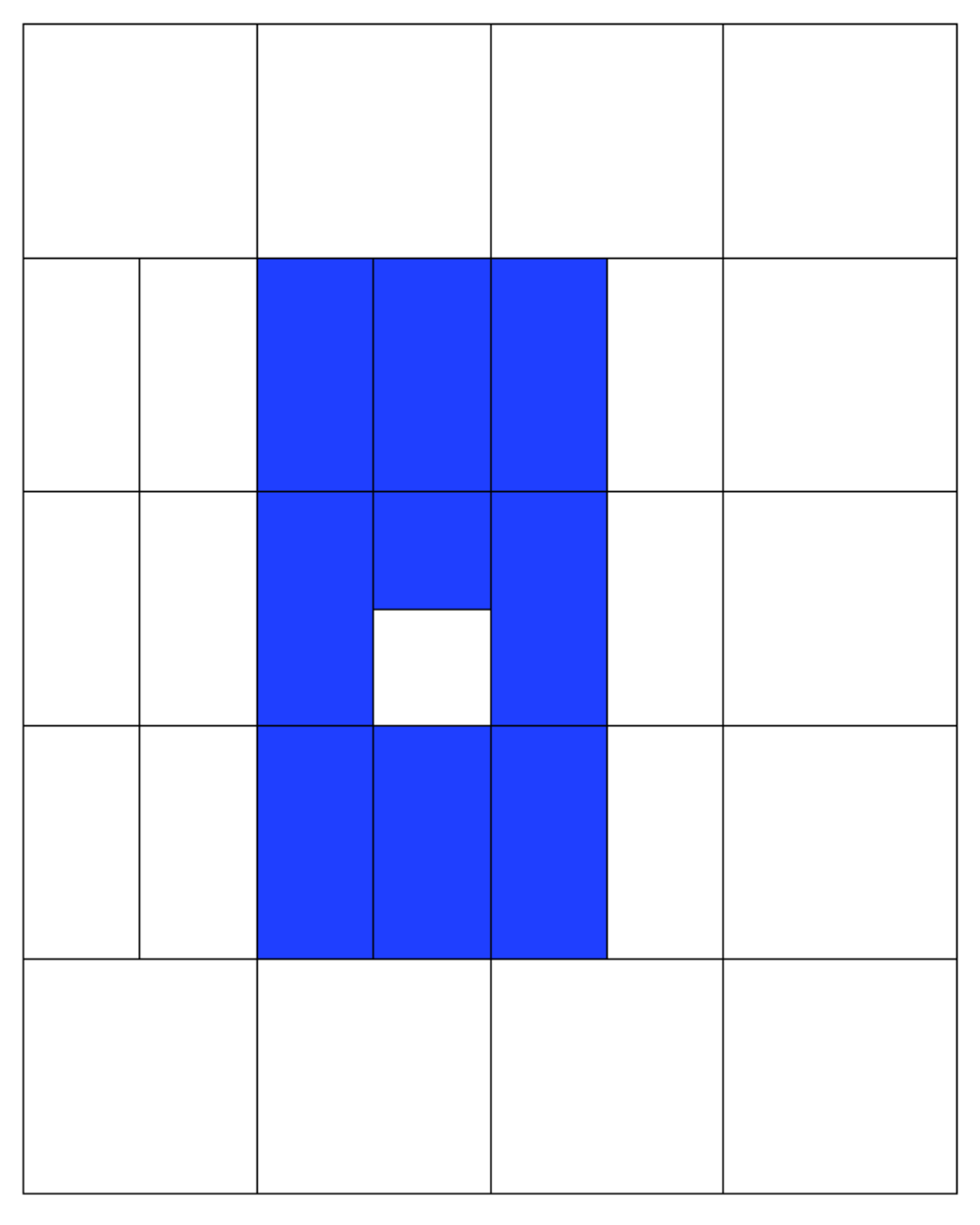}\raisebox{.10\textwidth}{\enspace$\rightarrow$\enspace}
\includegraphics[width=.17\textwidth]{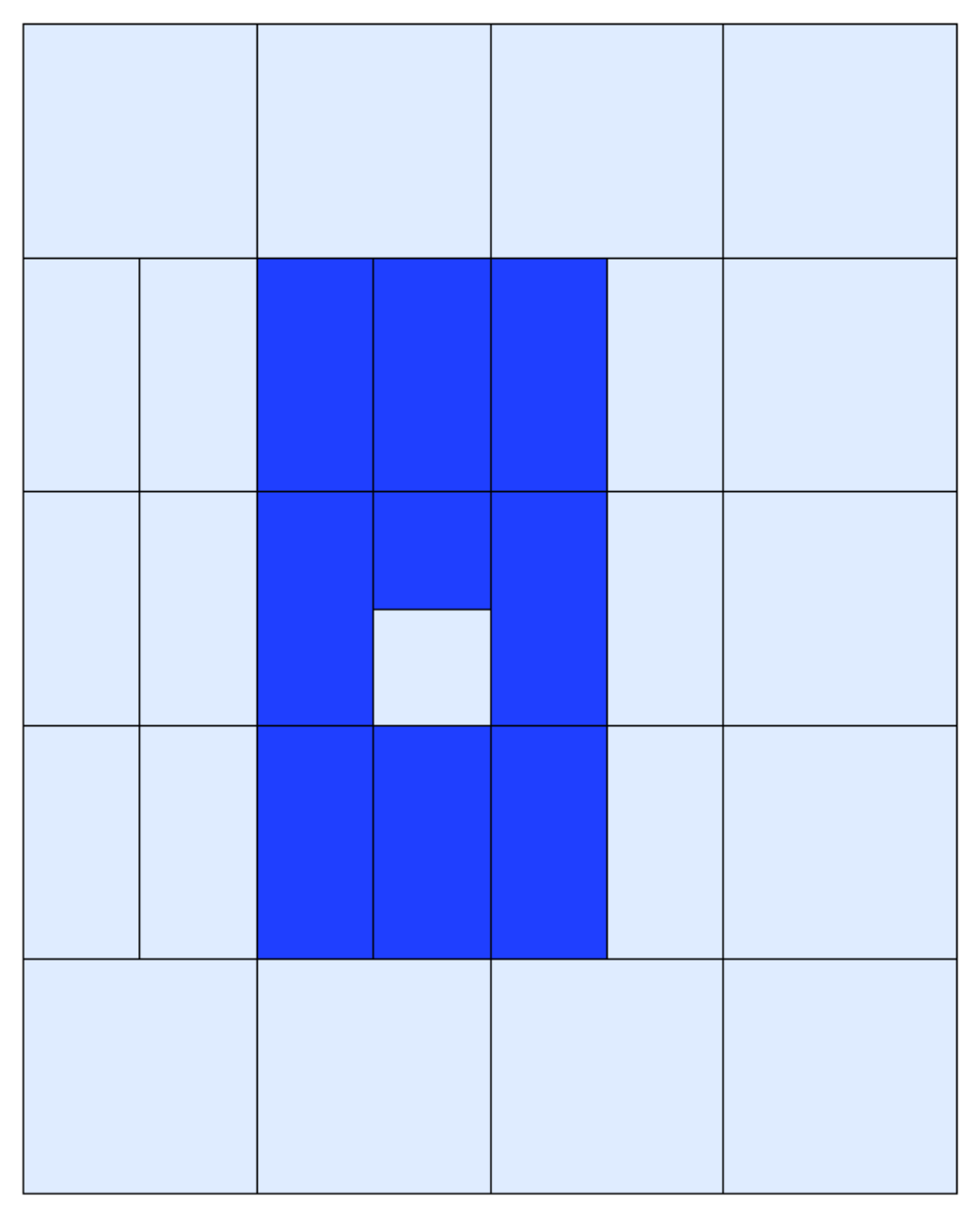}\\[1.5ex]\raisebox{.10\textwidth}{\enspace$\rightarrow$\enspace}
\includegraphics[width=.17\textwidth]{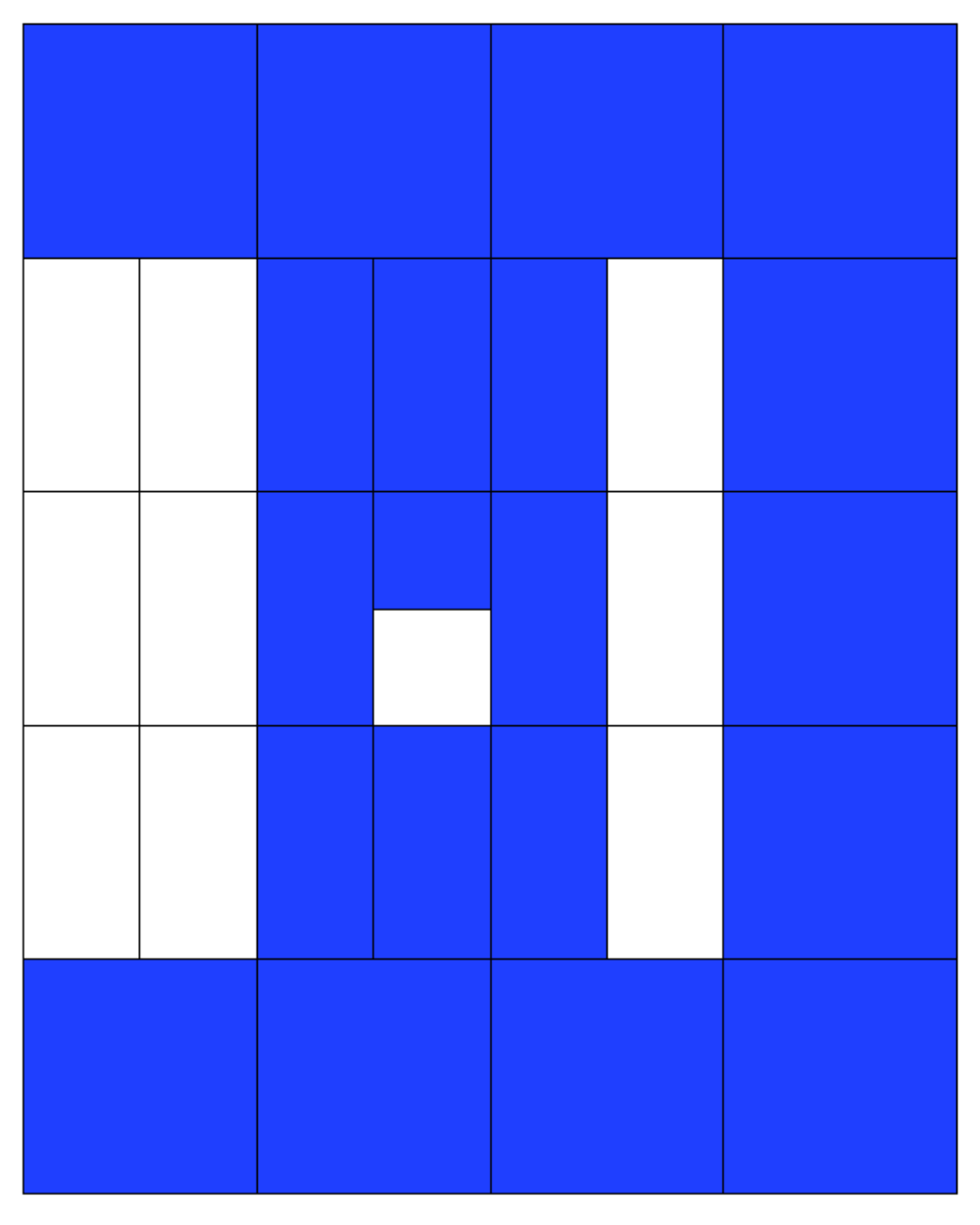}\raisebox{.10\textwidth}{\enspace$\rightarrow$\enspace}
\includegraphics[width=.17\textwidth]{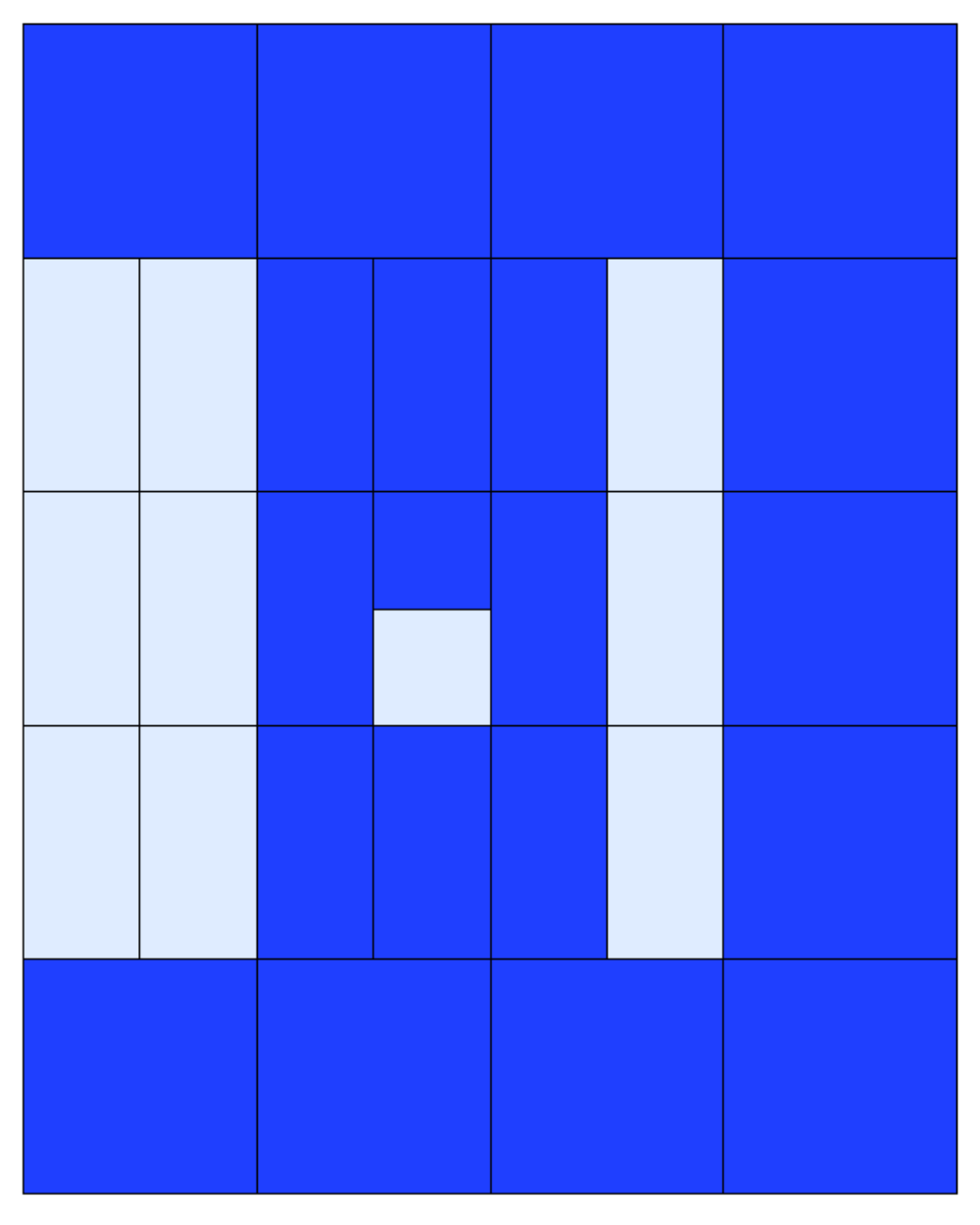}\raisebox{.10\textwidth}{\enspace$\rightarrow$\enspace}
\includegraphics[width=.17\textwidth]{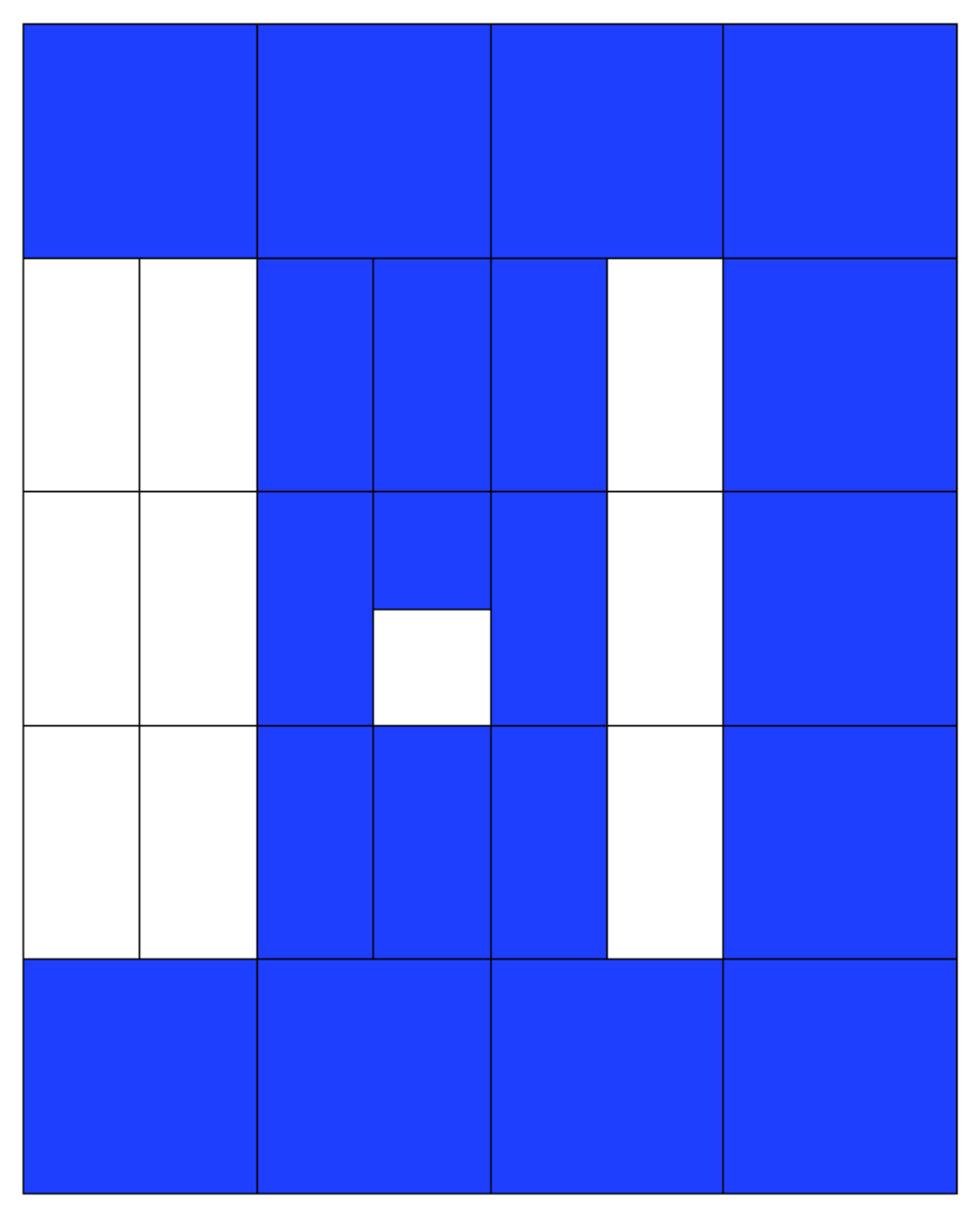}\raisebox{.10\textwidth}{\enspace$\rightarrow$\enspace}
\includegraphics[width=.17\textwidth]{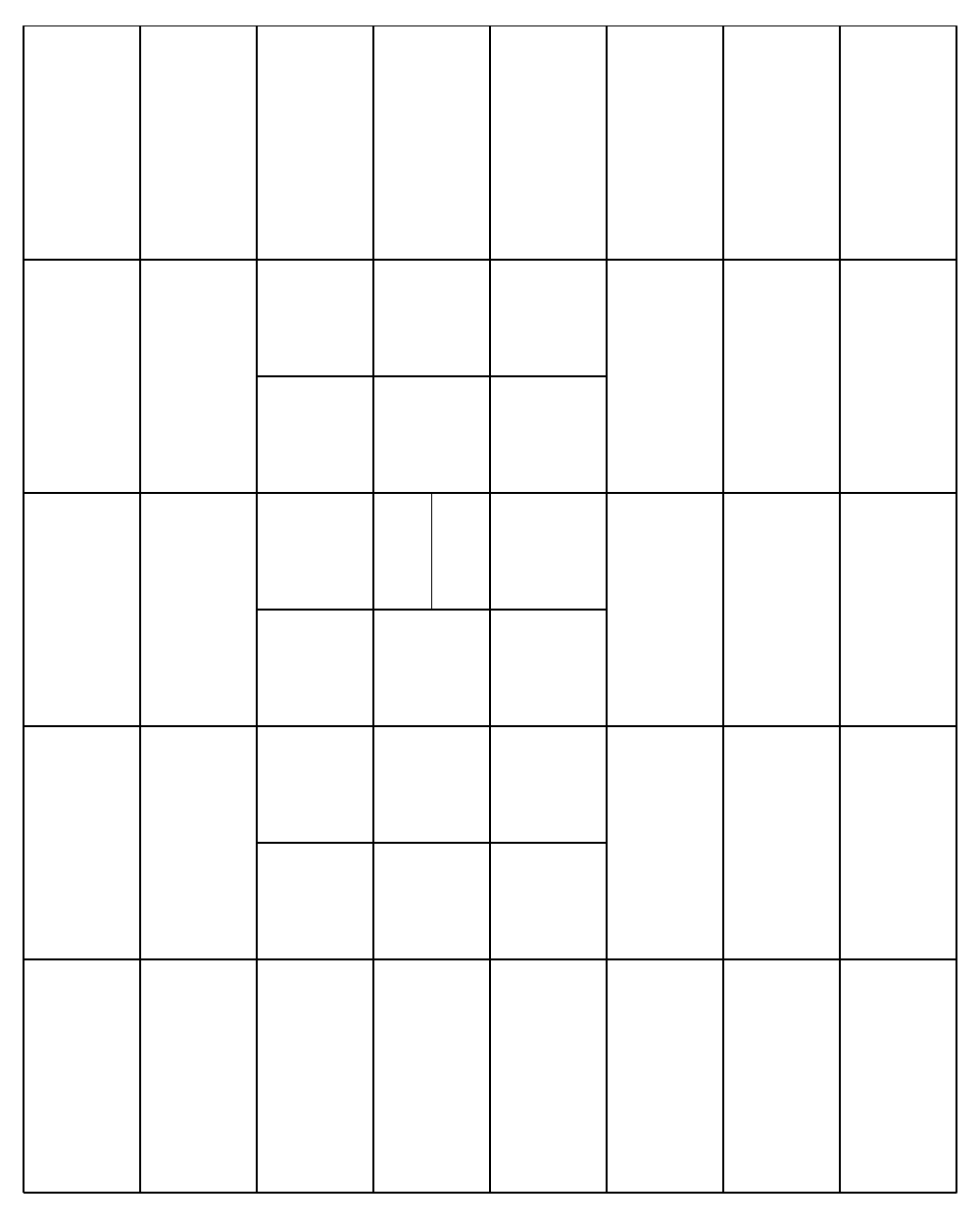}
\caption{Third refinement example. As in Figure~\ref{fig: refinement example 2}, Algorithm~\ref{alg: closure} marks coarser elements in the patch of the initially marked $K$. In this case, the computation of $\clos_\G(\{K\})$ involves three iterations of the algorithm.}
\label{fig: refinement example 3}
\end{figure}
In the subsequent definitions, we introduce a class of admissible meshes. We will then prove that Algorithm~\ref{alg: refinement} preserves admissibility.

\begin{df}[$(p,q)$-admissible bisections]\label{df: adm. bisection}%
Given a mesh $\G$ and an element $K\in\G$, the bisection of $K$ is called \emph{$(p,q)$-admissible} if all $K'\in\pq\G K$ satisfy $\ell(K')\ge\ell(K)$.

In the case of several elements $\M=\{K_1,\dots,K_J\}\subseteq\G$, the bisection $\bisect(\G,\M)$ is $(p,q)$-admissible if there is an order $(\sigma(1),\dots,\sigma(J))$ (this is, if there is a permutation $\sigma$ of $\{1,\dots,J\}$) such that \[\bisect(\G,\M)=\bisect(\bisect(\dots\bisect(\G,K_{\sigma(1)}),\dots),K_{\sigma(J)})\] is a concatenation of $(p,q)$-admissible bisections.
\end{df}
\begin{df}[Admissible mesh]\label{df: admissible mesh}
A refinement $\G$ of $\G_0$ is \emph{$(p,q)$-admissible} if there is a sequence of meshes $\G_1,\dots,\G_J=\G$ and markings $\M_j\subseteq\G_j$ for $j=0,\dots,J-1$, such that $\G_{j+1}=\bisect(\G_j,\M_j)$ is an $(p,q)$-admissible bisection for all $j=0,\dots,J-1$. The set of all $(p,q)$-admissible meshes, which is the initial mesh and its $(p,q)$-admissible refinements, is denoted by $\A$. For the sake of legibility, we write `admissible' instead of `$(p,q)$-admissible' throughout the rest of this paper.
\end{df}
\begin{rem}
This definition refers to the understanding of `admissible meshes' in FE analysis. It does \emph{not} match the definitions of admissible meshes from \cite{AS-Tsplines-of-arb-degree,ASTS-characterization}.
\end{rem}

\begin{prp}\label{prp: ref works}
Any admissible mesh $\G$ and any set of marked elements $\M\subseteq\G$ satisfy $\refine(\G,\M)\in\A$.
\end{prp}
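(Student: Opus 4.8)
The plan is to reduce the statement to a claim about a single, already ``closed'' marking, and then argue by induction. Call a set $T\subseteq\G$ \emph{$\G$-closed} if $K'\in T$ for every $K\in T$ and every $K'\in\pq\G K$ with $\ell(K')<\ell(K)$. Algorithm~\ref{alg: closure} enlarges its candidate set $\Mtilde$ monotonically inside the finite mesh $\G$, so it terminates, and its stopping criterion is exactly that the output $\clos_\G(\M)$ is $\G$-closed. Hence it suffices to prove: for every admissible $\G$ and every $\G$-closed $T\subseteq\G$, the bisection $\bisect(\G,T)$ can be obtained as a concatenation of admissible bisections (so that $\bisect(\G,T)\in\A$). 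Applying this with $T=\clos_\G(\M)$ then yields $\refine(\G,\M)=\bisect(\G,\clos_\G(\M))\in\A$.

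I would prove the reduced claim by induction on $|T|$. The case $T=\emptyset$ is trivial. Otherwise choose $K\in T$ of minimal level. If some $K'\in\pq\G K$ had $\ell(K')<\ell(K)$, then $\G$-closedness would force $K'\in T$, contradicting minimality of $\ell(K)$; hence all elements of $\pq\G K$ have level at least $\ell(K)$, so bisecting $K$ in $\G$ is an admissible bisection and $\G'\sei\bisect(\G,K)\in\A$. Since the result of several bisections is independent of their order, $\bisect(\G,T)=\bisect(\G',T\setminus\{K\})$, so it remains to check that $T\setminus\{K\}$ is $\G'$-closed; the induction hypothesis then exhibits $\bisect(\G',T\setminus\{K\})$ as a concatenation of admissible bisections, and prepending the admissible bisection of $K$ finishes the induction.

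The crux — and the step I expect to be the main obstacle — is verifying that $T\setminus\{K\}$ is $\G'$-closed. The mesh $\G'$ arises from $\G$ by replacing $K$ with its two children, of level $\ell(K)+1$. Fix $\tilde K\in T\setminus\{K\}$ and $K'\in\pq{\G'}{\tilde K}$ with $\ell(K')<\ell(\tilde K)$. If $K'$ is not a child of $K$, then $K'\in\G$, hence $K'\in\pq\G{\tilde K}$, hence $K'\in T$ by $\G$-closedness; and $K'\ne K$ because $K\notin\G'$, so $K'\in T\setminus\{K\}$ as required. It remains to exclude that $K'$ is a child of $K$. In that case $\ell(K')=\ell(K)+1$, so $\ell(\tilde K)\ge\ell(K)+2$. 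Since $K'\subseteq K$, the midpoint of $K'$ differs from that of $K$ by a quarter of a side length of $K$ in one coordinate and by $0$ in the other, and using $p,q\ge2$ together with the explicit form of $\D$ one checks that this displacement is bounded componentwise by $\tfrac12\D(\ell(K))$, while $\D(\ell(\tilde K))\le\D(\ell(K)+2)=\tfrac12\D(\ell(K))$. Combining these with $\Dist(K',\tilde K)\le\D(\ell(\tilde K))$ and the (componentwise) triangle inequality gives $\Dist(K,\tilde K)\le\D(\ell(K))$, i.e.\ $K\in\pq\G{\tilde K}$: so $\tilde K$ would have a strictly coarser element in its $(p,q)$-patch.

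To contradict this I would use admissibility of $\G$. Write $\G$ as the end of a sequence of admissible bisections of $\G_0$ and consider the step at which $\tilde K$ was produced by bisecting its parent $P$ (of level $\ell(\tilde K)-1$): admissibility of that bisection means every element of $\pq{}{P}$ had level at least $\ell(\tilde K)-1$, and by monotonicity of patches under inclusion ($\tilde K\subseteq P$, cf.\ the Remark) together with the fact that later bisections only refine, every element of $\pq{}{\tilde K}$ keeps level at least $\ell(\tilde K)-1$ thereafter; since $K$ is present in $\G$ and hence throughout this tail of the history, $K\in\pq\G{\tilde K}$ would force $\ell(K)\ge\ell(\tilde K)-1$, contradicting $\ell(\tilde K)\ge\ell(K)+2$. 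The genuinely delicate point is the claim that ``every element of $\pq{}{\tilde K}$ keeps level $\ge\ell(\tilde K)-1$ under subsequent admissible bisections'': it requires showing that the children of a much coarser element that gets bisected cannot fall into the small patch of $\tilde K$, which is precisely the geometric estimate (again relying on the precise shape of $\D$ and on $p,q\ge2$) that makes the whole induction go through — and if no direct argument for it is available, this balancedness property of admissible meshes and the admissibility of $\refine(\G,\M)$ are most safely established simultaneously, by a single induction on the length of the admissible refinement history.
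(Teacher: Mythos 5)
Your skeleton is essentially the paper's: replace $\M$ by its closure, observe that the closure is exactly a ``closed'' set, bisect its elements in ascending order of level (the paper batches by level where you peel off one minimal element at a time, which is a cosmetic difference), and reduce the admissibility of each individual bisection to a local quasi-uniformity property of admissible meshes. The problem is that the two load-bearing facts your induction rests on are precisely the paper's Lemma~\ref{lma: levels change slowly} and Corollary~\ref{crl: old magic patch}, and neither is actually established in your proposal. The balancedness property you invoke --- every element of $\pq\G{\tilde K}$ has level at least $\ell(\tilde K)-1$ --- is Lemma~\ref{lma: levels change slowly}; you correctly flag that proving it requires excluding that descendants of a far-away coarse element drift into the small patch of $\tilde K$, but you leave this open. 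The paper closes it with the parent trick: if $\hat K$ is the parent of $\tilde K$ and $d=\Dist(\tilde K,\hat K)$, then $\D(\ell(\tilde K)-1)-d>\D(\ell(\tilde K))$, so the patch of $\tilde K$ (even enlarged to the whole region $\U{\tilde K}$) sits inside the patch that was checked when $\hat K$ was bisected, and levels only increase afterwards. Without this quantitative inequality the induction has nothing to stand on; ``establish it simultaneously by induction on the history'' is a reasonable plan, but it is the entire content of the lemma, not a detail.

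Second, your triangle-inequality step proves the wrong patch membership. From $\Dist(K',\tilde K)\le\D(\ell(\tilde K))\le\tfrac12\D(\ell(K))$ and $\Dist(K,K')\le\tfrac12\D(\ell(K))$ you get $\Dist(K,\tilde K)\le\D(\ell(K))$, which says $\tilde K\in\pq\G K$ --- harmless, since $\tilde K$ is \emph{finer} than $K$ and quasi-uniformity gives no contradiction in that direction. What your argument needs is $K\in\pq\G{\tilde K}$, i.e.\ $\Dist(K,\tilde K)\le\D(\ell(\tilde K))$, a radius smaller by a factor of at least two; and this can genuinely fail under the raw midpoint-distance definition, because the midpoint of $K$ is offset from that of its child $K'$ by a quarter side length of $K$ in the bisection direction, which can push $\midp(K)$ outside the box $\U{\tilde K}$ even when $\midp(K')$ lies inside it. The statement $K\in\pq\G{\tilde K}$ is nevertheless true for admissible $\G$, but only via Corollary~\ref{crl: old magic patch}: the patch of $\tilde K$ equals the set of elements overlapping $\U{\tilde K}$ with positive measure, so $K\supseteq K'$ and $\midp(K')\in\U{\tilde K}$ suffice. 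That corollary is itself the strengthened form of Lemma~\ref{lma: levels change slowly}, so both gaps collapse into the same missing ingredient. I recommend you state and prove the local quasi-uniformity lemma (with its strong, enlarged-patch version) first, and then run your induction using the overlap characterization of the patch rather than the midpoint triangle inequality.
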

The proof of Proposition~\ref{prp: ref works} given at the end of this section relies on the subsequent results.

\begin{lma}[local quasi-uniformity]\label{lma: levels change slowly}
Given  $K\in\G\in\A$, 
any $K'\in\pq\G K$ satisfies $\ell(K')\ge\ell(K)-1$.
\end{lma}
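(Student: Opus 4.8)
The plan is to prove the assertion by induction on the number of elements of $\G$. The base case $\G=\G_0$ is trivial, since all its elements have level $0$. For the induction step, Definition~\ref{df: adm. bisection} lets us write an arbitrary $\G\in\A\setminus\{\G_0\}$ as $\G=\bisect(\bar\G,\hat K)$, where $\bar\G\in\A$ has one element fewer than $\G$ and the bisection of $\hat K\in\bar\G$ is admissible, i.e. every $K''\in\pq{\bar\G}{\hat K}$ satisfies $\ell(K'')\ge\ell(\hat K)$; the induction hypothesis is then available for $\bar\G$ and for every admissible mesh with fewer elements than $\G$. Write $K_1,K_2$ for the children of $\hat K$, so $\ell(K_1)=\ell(K_2)=\ell(\hat K)+1$, and fix $K\in\G$ and $K'\in\pq{\G}{K}$; we must show $\ell(K')\ge\ell(K)-1$.

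I would distinguish three cases. (i) If neither $K$ nor $K'$ is a child of $\hat K$, then $K,K'\in\bar\G$, the quantities $\Dist(K',K)$ and $\D(\ell(K))$ are unchanged by the bisection, hence $K'\in\pq{\bar\G}{K}$, and the induction hypothesis gives $\ell(K')\ge\ell(K)-1$. (ii) If $K\in\{K_1,K_2\}$, then $K\subseteq\hat K$ gives $\pq{\G}{K}\subseteq\pq{\G}{\hat K}$ by the nestedness of patches in the remark after Definition~\ref{df: magic patch}; if $K'\in\{K_1,K_2\}$ then $\ell(K')=\ell(\hat K)+1=\ell(K)$, and otherwise $K'\in\bar\G$ and $\Dist(K',\hat K)\le\D(\ell(\hat K))$, so that admissibility of the bisection of $\hat K$ yields $\ell(K')\ge\ell(\hat K)=\ell(K)-1$. (iii) The remaining and only substantial case is $K\in\bar\G$ (not a child of $\hat K$) and, say, $K'=K_1\subseteq\hat K$; since then $\ell(K')=\ell(\hat K)+1$, the claim reduces to showing $\ell(\hat K)\ge\ell(K)-2$.

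To handle case (iii), I would argue by contradiction, assuming $\ell(K)\ge\ell(\hat K)+3$, and trace back the creation of $K$. Let $P\supseteq K$ with $\ell(P)=\ell(K)-1$ be the parent of $K$, and let $\tilde\G\in\A$ be the mesh at the moment $P$ is bisected along a refinement history of $\bar\G$ consisting of single admissible bisections (which exists by Definition~\ref{df: adm. bisection}); then $\tilde\G$ has fewer elements than $\G$, and every element of $\pq{\tilde\G}{P}$ has level at least $\ell(P)=\ell(K)-1$. Let $L\in\tilde\G$ be the ancestor of $\hat K$, so $\ell(L)\le\ell(\hat K)\le\ell(K)-3<\ell(P)$, whence $L\notin\pq{\tilde\G}{P}$, i.e. $\Dist(L,P)\not\le\D(\ell(P))$. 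It then remains to combine this with the inclusions $K\subseteq P$ and $K_1\subseteq\hat K\subseteq L$, the hypothesis $\Dist(K_1,K)\le\D(\ell(K))$, and the explicit formula for $\D$, applying the induction hypothesis to $\tilde\G$ in the subcase where $L$ is much coarser than $P$ — so that the elements of $\pq{\tilde\G}{P}$ separating $L$ from $P$, all of level $\ge\ell(K)-1$, push the region occupied by $L$ far enough from the region occupied by $P$ — in order to conclude $\Dist(K_1,K)\not\le\D(\ell(K))$, contradicting $K_1\in\pq{\G}{K}$.

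I expect this concluding estimate to be the main obstacle. The configuration to exclude is that the child of a coarse element pokes into the patch of a much finer element; the reason it cannot occur is that the admissible bisections that created $K$ and its ancestors have already forced a sufficiently fine neighbourhood around $K$, and the precise constants in $\D(k)$ — together with the parity distinction between even and odd levels, which governs both the direction of the bisection and the aspect ratio of the patch — are exactly calibrated so that this neighbourhood is wide enough. The real work therefore lies in carrying the arithmetic of these constants correctly through the even/odd cases and through the telescoping over the intermediate meshes.
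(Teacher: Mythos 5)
Your induction on the last admissible bisection $\G=\bisect(\bar\G,\hat K)$ is sound in structure, and your cases (i) and (ii) are complete. The problem is case (iii), which is the only place where the content of the lemma actually lives, and there you stop exactly at the decisive step: the ``concluding estimate'' that you yourself flag as ``the main obstacle'' and ``the real work'' is never carried out, so the statement is not proved. Moreover, the route you sketch has a concrete unresolved difficulty. From $L\notin\pq{\tilde\G}{P}$ you obtain only $\Dist(L,P)\not\le\D(\ell(P))$, which is a statement about the \emph{midpoint} of the coarse ancestor $L$; a coarse element can have its midpoint far from $P$ while still covering a large part of the neighbourhood of $P$, and conversely $\Dist(L,K_1)$ can be as large as half the diameter of $L$, so the triangle inequality $\Dist(L,P)\le\Dist(L,K_1)+\Dist(K_1,K)+\Dist(K,P)$ yields nothing when $\ell(L)$ is small. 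To convert ``$L$ is not in the patch of $P$'' into genuine geometric separation of $K_1$ from $K$ you need the content of Corollary~\ref{crl: old magic patch} (the patch of $P$ coincides with the set of elements overlapping $\U P$) for the earlier mesh $\tilde\G$; but that corollary is derived from the \emph{strong} form \eqref{eq: levels change slooowly} of the very lemma you are proving, so your induction hypothesis would have to be strengthened accordingly — and even then you would still owe the componentwise, parity-dependent verification of $\D(\ell(P))-\Dist(K,P)>\D(\ell(K))$.

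For comparison, the paper avoids the case split entirely by centering the argument on the parent $\hat K$ of the patch's \emph{center} $K$ rather than on the last bisection: admissibility at the moment this parent was bisected forces every element near it to have level at least $\ell(K)-1$, levels never decrease under refinement, and the single computation $\D(\ell(K)-1)-\Dist(K,\hat K)>\D(\ell(K))$ shows that the patch of $K$ is contained in the patch of its parent. Your case (iii), once pushed through, reconstructs exactly this argument (trace back to the bisection of the parent $P$ of $K$, plus the same inequality), so the outer induction buys nothing. I recommend either completing case (iii) with the strengthened induction hypothesis and the explicit constant check, or switching to the parent-of-$K$ argument directly.
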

\begin{proof}
For $\ell(K)=0$, the assertion is always true. For $\ell(K)>0$, consider the parent $\hat K$ of $K$ (i.e., the unique element $\hat K\in\tcup\A$ with $K\in\child(\hat K)$). Since $K$ results from the bisection of $\hat K$, we also have that
\begin{align*}
d(K)\sei\Dist(K,\hat K)&=\begin{cases}(2^{-(\ell(\hat K)+4)/2},0)&\text{if $\ell(\hat K)$ is even,}\\(0,2^{-(\ell(\hat K)+3)/2})&\text{if $\ell(\hat K)$ is odd.}\end{cases}\\
&=\begin{cases}(0,2^{-(\ell(K)+2)/2})&\text{if $\ell(K)$ even,}\\(2^{-(\ell(K)+3)/2},0)&\text{if $\ell(K)$ odd.}\end{cases}
\end{align*}
Since $\G$ is admissible, there are admissible meshes $\G_0,\dots,\G_J=\G$ and some $j\in\{0,\dots,J-1\}$ such that $K\in\G_{j+1}=\bisect(\G_j,\{\hat K\})$.
The admissibility $\G_{j+1}\in\A$ implies that any $K'\in\pq{\G_j}{\hat K}$ satisfies $\ell(K')\ge\ell(\hat K)=\ell(K)-1$.
Since levels do not decrease during refinement, we get
\begin{alignat}{2}
\notag \ell(K)-1&\le\min\bigl\{\ell(K')\mid K'\in\G_j&&\text{ and }\Dist(\hat K,K')\le\D(\ell(\hat K))\bigr\}\\
\notag &\le\min\bigl\{\ell(K')\mid K'\in\G&&\text{ and }\Dist(\hat K,K')\le\D(\ell(\hat K))\bigr\}\\
\notag &=\min\bigl\{\ell(K')\mid K'\in\G&&\text{ and }\Dist(\hat K,K')\le\D(\ell(K)-1)\bigr\}\\
\label{eq: levels change slooowly} &\le\min\bigl\{\ell(K')\mid K'\in\G&&\text{ and }\Dist(K,K')+d(K)\le\D(\ell(K)-1)\bigr\}.\end{alignat}
One easily computes $\D(\ell(K)-1)-d(K)>\D(\ell(K))$, which concludes the proof.
\end{proof}

\begin{crl}\label{crl: old magic patch}
Let $K\in\G\in\A$ and
\begin{align*}
\U K&\sei\{x\in\barOmega\mid\Dist(K,x)\le\D(\ell(K))\},
\shortintertext{then}
\pq\G K &= \{K'\in\G\mid\lvert K'\cap\U K\rvert>0\}.
\end{align*}
\end{crl}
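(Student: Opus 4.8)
The plan is to prove the two inclusions separately. Write $B\sei\{x\in\mathbb R^2\mid\Dist(K,x)\le\D(\ell(K))\}$ for the unclipped box, so that $\U K=B\cap\barOmega$. Because $p,q\ge2$ makes both entries of $\D(\ell(K))$ strictly positive and $\midp(K)\in\barOmega$, the set $\U K$ is a non-degenerate axis-parallel rectangle, equal to the closure of its non-empty interior; a glance at Definition~\ref{df: magic patch} (using $\lfloor p/2\rfloor,\lceil p/2\rceil,\lfloor q/2\rfloor,\lceil q/2\rceil\ge1$) additionally gives $K\subseteq\U K$. For the inclusion "$\subseteq$", let $K'\in\pq\G K$; then $\Dist(K',K)\le\D(\ell(K))$ says exactly $\midp(K')\in B$, and since $\midp(K')\in K'\subseteq\barOmega$ we get $\midp(K')\in\U K=\overline{\operatorname{int}\U K}$. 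As $\midp(K')$ is an interior point of $K'$ and $\operatorname{int}K'$ is open, it follows that $\operatorname{int}K'\cap\operatorname{int}\U K\ne\emptyset$, a set of positive measure; hence $\lvert K'\cap\U K\rvert>0$.

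For the inclusion "$\supseteq$", let $K'\in\G$ with $\lvert K'\cap\U K\rvert>0$. Then $K'\cap\U K$ is convex of positive measure, so $\operatorname{int}(K'\cap\U K)=\operatorname{int}K'\cap\operatorname{int}\U K\ne\emptyset$, and a fortiori $\operatorname{int}K'\cap\operatorname{int}B\ne\emptyset$; it remains to deduce $\midp(K')\in B$. The structural observation is that each of the four lines forming $\partial B$ has its $x$-coordinate, respectively $y$-coordinate, equal to an integer multiple of the $x$-side length, respectively $y$-side length, of a level-$\ell(K)$ element — the elementary computation behind Definition~\ref{df: magic patch}, in which the half-integer grid coordinates of $\midp(K)$ are pushed onto the integer grid by the $+\tfrac12$ terms in $\D$. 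Combining this with the bound $\ell(K')\ge\ell(K)-1$ — valid for \emph{every} element meeting $\operatorname{int}B$, by the local quasi-uniformity of Lemma~\ref{lma: levels change slowly} — one gets that each corner of $K'$ has, in each coordinate, a value on a grid at least as fine as the level-$\ell(K)$ grid, with the sole exception of the one direction in which a level-$(\ell(K)-1)$ element is twice as coarse as $K$; and in that exceptional direction a one-line computation shows that if $K'$ crosses the relevant line of $\partial B$ then $\midp(K')$ lies exactly on it. Thus no line of $\partial B$ meets $\operatorname{int}K'$ without containing $\midp(K')$; since $\operatorname{int}K'$ is connected and meets $\operatorname{int}B$, either $K'\subseteq B$ or $\midp(K')\in\partial B$, and in both cases $\midp(K')\in B$, i.e.\ $\Dist(K',K)\le\D(\ell(K))$, i.e.\ $K'\in\pq\G K$. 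With "$\subseteq$", this is the asserted equality.

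The step I expect to be the real obstacle is the level bound $\ell(K')\ge\ell(K)-1$ for elements $K'$ that merely \emph{meet} $\operatorname{int}B$, as opposed to being centred in $B$, which is all Lemma~\ref{lma: levels change slowly} literally states. A coarse $K'$ reaching into $B$ must be large, so $\midp(K')$ can sit far outside $B$, and the naive estimate obtained by passing to the parent $\hat K$ of $K$ and imitating the proof of Lemma~\ref{lma: levels change slowly} loses a term of the order of half a side length of $K'$ and fails to close. The cleanest remedy is to prove the entire Corollary by induction on the number of admissible bisections producing $\G$: for the initial mesh $\G_0$ the grid-alignment argument applies verbatim, since every cell and all of $\partial B$ lie on the integer grid; and in the inductive step one invokes the admissibility of the single bisection just performed — the only $(p,q)$-patches, equivalently the only $\U$-boxes, that shrink are those of the freshly created children, whose patches are fine by admissibility, and this together with the dyadic alignment excludes any coarse element from poking into the new, smaller boxes. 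Carrying out this bookkeeping is the technical core.
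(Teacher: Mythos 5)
Your proposal is correct in outline but takes a genuinely different route from the paper, whose entire proof is the single sentence that the corollary ``is a consequence of Lemma~\ref{lma: levels change slowly} in the strong version \eqref{eq: levels change slooowly} that involves a bigger patch of $K$''. Your ``$\subseteq$'' direction is complete, and your two structural observations check out against Definition~\ref{df: magic patch}: the four lines bounding $\U K$ lie on the grid of level-$\ell(K)$ elements, and a level-$(\ell(K)-1)$ element, coarser by a factor of two in exactly one coordinate, can be crossed by such a line only through its midpoint. You are also right to distrust the one-line argument: the slack $\D(\ell(K)-1)-d(K)-\D(\ell(K))$ vanishes in the $x$-component whenever $p$ is even (and in the $y$-component, e.g., for $q=3$), so the enlarged patch of \eqref{eq: levels change slooowly} leaves no room to absorb half a side length of a coarse element reaching into $\U K$ from outside; the level bound $\ell(K')\ge\ell(K)-1$ for \emph{all} $K'$ with $\lvert K'\cap\U K\rvert>0$ genuinely requires an additional argument. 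Your induction over admissible bisections is the natural fix and is of the same kind the paper itself uses for Theorem~\ref{thm: AS} and Proposition~\ref{prp: overlays are admissible}. What your route buys is a self-contained, verifiable proof; what it costs is the inductive bookkeeping, which you explicitly leave out. That is the one remaining gap, and it does close: for an old element $K$ of $\G'=\bisect(\G,K_0)$ only the children of $K_0$ are new, and if one of them meets $\U K$ then so does $K_0$, whence $\ell(K_0)\ge\ell(K)-1$ by Lemma~\ref{lma: levels change slowly}, the children have level $\ge\ell(K)$, and your grid observation makes them either contained in or interior-disjoint from $\U K$; for a new child $c$, the admissibility of the bisection together with the induction hypothesis applied to $K_0$ shows that every element meeting $\U{c}\subseteq\U{K_0}$ has level $\ge\ell(K_0)=\ell(c)-1$, after which your midpoint-on-the-boundary dichotomy finishes the argument. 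Write that half page out; as submitted, the ``$\supseteq$'' direction is a correct plan rather than a proof.
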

\begin{proof}
This is a consequence of Lemma~\ref{lma: levels change slowly} in the strong version \eqref{eq: levels change slooowly} that involves a bigger patch of $K$.
\end{proof}

\begin{proof}[Proof of Proposition~\ref{prp: ref works}]
Given the mesh $\G\in\A$ and marked elements $\M\subseteq\G$ to be bisected, we have to show that there is a sequence of meshes that are subsequent admissible bisections, with $\G$ being the first and $\refine(\G,\M)$ the last mesh in that sequence.
Set $\Mtilde\sei\clos_\G(\M)$ and
\begin{alignat}{2}
\notag\overline L&\sei\max\ell(\Mtilde),\quad\underline L\sei\min\ell(\Mtilde)&&\\
\notag\M_j&\sei\bigl\{K\in\Mtilde\mid\ell(K)=j\bigr\}&&\text{for}\enspace j=\underline L,\dots,\overline L\\
\label{eq: ref works 1}\G_{\underline L}&\sei\G,\quad \G_{j+1}\sei\bisect(\G_j,\M_j)&\enspace&\text{for}\enspace j=\underline L,\dots,\overline L.
\end{alignat}
It follows that $\refine(\G,\M)=\G_{\overline L+1}$. We will show by induction over $j$ that all bisections in  \eqref{eq: ref works 1} are admissible. 

For the first step $j=\underline L$, we know $\{K'\in\Mtilde\mid\ell(K')<\underline L\}=\emptyset$, and by construction of $\Mtilde$ that for each $K\in\Mtilde_{\underline L}$ holds  
$\{K'\in\pq\G K\mid\ell(K')<\ell(K)\}\subseteq\Mtilde$.
Together with $\ell(K)=\underline L$ follows for any $K\in\Mtilde_{\underline L}$ that  there is no $K'\in\pq\G K$ with $\ell(K')<\ell(K)$. This is, the bisections of all $K\in\Mtilde_{\underline L}$ are admissible independently of their order and hence $\bisect(\G_{\underline L},\Mtilde_{\underline L})$ is admissible.

Consider an arbitrary step $j\in\{\underline L,\dots,\overline L\}$ and assume that $\G_{\underline L},\dots,\G_j$ are admissible meshes. 
Assume for contradiction that there is $K\in\M_j$ of which the bisection is not admissible, i.e., there exists $K'\in\smash{\pq{\G_j}K}$ with $\ell(K')<\ell(K)$ and consequently $K'\notin\Mtilde$, because $K'$ has not been bisected yet. It follows from the closure Algorithm~\ref{alg: closure} that $K'\notin\G$. Hence,  there is $\hat K\in\G$ such that $K'\subset\hat K$. 
We have $\ell(\hat K)<\ell(K')<\ell(K)$, which implies $\ell(\hat K)<\ell(K)-1$. Note that $K\in\G$ because $\M_j\subseteq\Mtilde\subseteq\G$.
Moreover, from $K'\subset\hat K$ and $K'\in\pq{\G_j}K$ it follows with Corollary~\ref{crl: old magic patch} that $\hat K\in\pq\G K$.
Together with $\ell(\hat K)<\ell(K)-1$, Lemma~\ref{lma: levels change slowly} implies that $\G$ is not admissible, which contradicts the assumption.
\end{proof}

\section{Analysis-Suitability}\label{sec: AS}
In this section, we give a brief review on the concept of Analysis-Suitability, using the notation from \cite{AS-Tsplines-of-arb-degree}. We prove that all admissible meshes (in the sense of Definition~\ref{df: admissible mesh}) are analysis-suitable and hence provide linearly independent T-spline blending functions. In this paper, we omit the definition of the T-spline blending functions and details on their linear independence. We refer the reader to \cite{lin-ind-Tsplines,AS-Tsplines-are-DC} and, in particular for the case of non-cubic T-splines, \cite{AS-Tsplines-of-arb-degree}.

\begin{df}[Active nodes]
Consider an admissible mesh $\G\in\A$. The set of vertices (\emph{nodes}) of $\G$ is denoted by $\N$.
We define the \emph{active region} \[\AR\sei\bigl[\ceilfrac p2,M-\ceilfrac p2\bigr]\times\bigl[\ceilfrac q2,N-\ceilfrac q2\bigr]\]
and the set of \emph{active nodes} $\N_A\sei\N\cap\AR$. 
\end{df}
To each active node $T$, we associate local index vectors $\xx(T)$ and $\yy(T)$ that are defined below, depending on the mesh in the neighbourhood of $T$. These local index vectors are used to construct a tensor-product B-spline $B_T$, referred to as \emph{T-spline blending function}.

\begin{df}[Skeleton]
We denote  by $\hsk$ (resp.\ $\vsk$) the horizontal (resp.\ vertical) skeleton, which is the union of all horizontal (resp.\ vertical) edges.
Note that $\hsk\cap\vsk=\N$.
\end{df}

\begin{df}[Global index sets]\label{df: global indices}%
For any $y$ in the closed interval $\bigl[\ceilfrac q2,N-\ceilfrac q2\bigr]$, we set
\begin{alignat*}{2}
\XX(y) &\sei \bigl\{z\in[0,&M]&\mid(z,y)\in\vsk\bigr\},\\[-.5em]
\intertext{and for any $x\in\bigl[\ceilfrac p2,M-\ceilfrac p2\bigr]$,}\\[-2em]
\YY(x) &\sei \bigl\{z\in[0,&N]&\mid(x,z)\in\hsk\bigr\}.
\end{alignat*}
Note that in an admissible mesh, the entries $\bigl\{ 0,\dots,\ceilfrac p2-1,\enspace M-\ceilfrac p2+1,\dots,M\bigr\}$ are always included in $\XX(y)$ (and analogously for $\YY(x)$).
\end{df}
\begin{df}[{T-junction extension \cite[Section~2.1]{AS-Tsplines-of-arb-degree}}]\label{df: TJ-extensions}%
We denote by $\mathcal T\subset\N_A$ the set of all active nodes with valence three (i.e., active nodes that are endpoints of exactly three edges) and refer to them as \emph{T-junctions}.
Following the literature \cite{lin-ind-Tsplines,AS-Tsplines-are-DC}, we adopt the notation ${\bot},{\top},{\vdash},{\dashv}$ to indicate the four possible orientations of the T-junctions. T-junctions of type ${\dashv}$ and ${\vdash}$ (${\bot},{\top}$, respectively) and their extensions are called horizontal (vertical, resp.). For the sake of simplicity, let us consider a T-junction $T=(t_1,t_2)\in\mathcal T$ of type ${\dashv}$. Clearly, $t_1$ is one of the entries of $\XX(t_2)$. We extract from $\XX(t_2)$ the $p + 1$ consecutive indices $i_{-\lfloor p/2\rfloor}, \dots, i_{\lceil p/2\rceil}$ such that $i_0=t_1$. We denote
\begin{gather*}
\ext_e(T)\sei\bigl[i_{-\lfloor p/2\rfloor},i_0\bigr]\times \{t_2\},\quad \ext_f(T)\sei\bigl]i_0,i_{\lceil p/2\rceil}\bigr]\times\{t_2\},\\ \ext(T)\sei\ext_f(T)\cup\ext_e(T),
\end{gather*}
where $\ext_e(T)$ is denoted \emph{edge-extension}, $\ext_f(T)$ is denoted \emph{face-extension} and $\ext(T)$ is just the \emph{extension} of the T-junction $T$.
\end{df}
\begin{df}[{Analysis-Suitability \cite[Definition~2.5]{AS-Tsplines-of-arb-degree}}]
A mesh is \emph{analysis-suitable} if horizontal T-junction extensions do not intersect vertical T-junction extensions.
\end{df}
The main result of this section is the following theorem.
\begin{thm}\label{thm: AS}%
All admissible meshes (in the sense of Definition~\ref{df: admissible mesh}) are analysis-suitable.
\end{thm}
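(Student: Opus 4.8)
The plan is to show that no horizontal T-junction extension can intersect a vertical one, by exploiting the local quasi-uniformity of admissible meshes (Lemma~\ref{lma: levels change slowly}) together with the fact that the $(p,q)$-patch $\D(\ell(K))$ was deliberately calibrated so that $\tcup\pq\G K$ contains exactly a face-extension length in one coordinate direction and an edge-extension length in the other (cf.\ the Remark after Definition~\ref{df: magic patch}). So the first step is to record, for a T-junction $T$ of each of the four types, precisely how long $\ext_e(T)$ and $\ext_f(T)$ are in terms of the level of the adjacent elements, and to relate these lengths to the components of $\D(\,\cdot\,)$. Because the mesh is obtained by level-dependent bisection (horizontal cuts on odd levels, vertical cuts on even levels, Definition~\ref{df: bisection}), a T-junction $T$ of type ${\dashv}$ or ${\vdash}$ has its stem pointing in the $x$-direction and is necessarily created at an even level, while ${\bot},{\top}$ T-junctions are created at an odd level; this parity bookkeeping is what ties the extension lengths to the two cases in the definition of $\D$.

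The second step is to argue that the extension $\ext(T)$ of a T-junction $T$ is contained in the union of $(p,q)$-patches of the small elements sitting along the stem of $T$. Concretely, for a ${\dashv}$-junction $T=(t_1,t_2)$, let $K$ be one of the two smallest elements of $\G$ having $T$ on its boundary (these exist since $T$ has valence three); then $\ell(\ext_e(T))$ and $\ell(\ext_f(T))$ are governed by $\ell(K)$, and by the Remark the horizontal reach of $\tcup\pq\G K$ is exactly an edge-extension length to each side, while its vertical reach is a face-extension length above and below. Using Corollary~\ref{crl: old magic patch} (so that $\pq\G K$ is characterised by overlap with the box $\U K$) and Lemma~\ref{lma: levels change slowly} to control how much coarser neighbouring elements can be, one gets that $\ext(T)$ lies within the closed box $\U K$, hence is ``seen'' only by elements of level $\ge\ell(K)-1$; crucially the horizontal extension of a horizontal T-junction stays within a strip of vertical width governed by the \emph{edge} component of $\D$, which is strictly smaller than the \emph{face} component used for vertical T-junctions at the complementary parity.

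The third step is the actual contradiction. Suppose a horizontal extension $E_h$ (from a ${\dashv}$ or ${\vdash}$ junction $T$) meets a vertical extension $E_v$ (from a ${\bot}$ or ${\top}$ junction $S$) at a point $z$. Both extensions are unions of mesh edges (the edge-part) together with one extra half-open segment of length a half-cell (the face-part), so the intersection point $z$ either is a mesh node lying in the interior of an edge of the other skeleton — impossible in any mesh since skeleton edges meet only at nodes and $z$ would then be a valence-four-or-more node, contradicting that $z$ lies on the open face-extension — or $z$ lies on the face-extension of one of them, say on $\ext_f(T)$, which by Definition~\ref{df: TJ-extensions} is a region \emph{without} a mesh edge of $\hsk$ through it. Either way I derive that at $z$ there must be two elements of very different levels adjacent across $z$: on the side of $T$ the relevant element has level $\ell(K)$, but the presence of $E_v$ through $z$ forces an element of level at least $\ell(K)+1$ (because $E_v$ is itself made of edges finer than the face-extension of $T$ allows), and this finer element lies in $\pq\G K$ by the patch-reach computation — contradicting admissibility via Definition~\ref{df: adm. bisection}, since $K$ was bisected at a stage where its patch should already have been fine, and the offending coarser/finer mismatch violates $\ell(K')\ge\ell(K)$. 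Formally one runs the argument on the mesh $\G_j$ at the step where the deeper of the two T-junctions was created and invokes Lemma~\ref{lma: levels change slowly}.

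\textbf{Main obstacle.} The delicate point is the bookkeeping in the second and third steps: the exact lengths of $\ext_e$ and $\ext_f$ depend on the parity of the level and on the floors/ceilings $\lfloor p/2\rfloor,\lceil p/2\rceil$ (and symmetrically for $q$), and one must verify the strict inequality between the ``edge reach'' and the ``face reach'' of $\D$ across the two parities — this is exactly the computation $\D(\ell(K)-1)-d(K)>\D(\ell(K))$-type estimate already used in Lemma~\ref{lma: levels change slowly}, but now comparing an $x$-component of $\D$ at one parity against a $y$-component at the other. Getting these constants to line up for all $p,q\ge 2$, in all four T-junction orientations, and handling the half-open face-extension endpoint cleanly, is where the real work lies; the topological ``extensions are unions of edges'' part is routine.
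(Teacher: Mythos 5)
Your overall strategy --- a direct, ``static'' argument that in an arbitrary admissible mesh no horizontal extension can meet a vertical one, using only Lemma~\ref{lma: levels change slowly} and the calibration of $\D$ --- differs from the paper's, which is an induction over single admissible bisections \emph{reordered so that elements are bisected in ascending order of level}. That reordering is the load-bearing idea your plan is missing: it guarantees that the last bisected element $K$ has maximal level in the whole mesh, whence inside $\U K$ the horizontal skeleton coincides exactly with that of the uniform mesh $\Guni{\ell(K)}$ (cf.\ \eqref{eq: ASproof: 1}--\eqref{eq: ASproof: 3}). Only the finitely many \emph{new} vertical T-junctions created by this one bisection need to be checked; their extensions are pinned to an explicit vertical segment, and every horizontal T-junction is forced to sit outside the interior of $\U K$, so its extension demonstrably stops short of the midline of $K$. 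Without this ordering you must control the geometric reach of \emph{both} extensions of an arbitrary pair $(T,S)$ of T-junctions that may be far apart --- so far that neither lies in the $(p,q)$-patch of any element adjacent to the other --- and local quasi-uniformity at one of them then says nothing about the mesh near the other.

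Concretely, your third step does not close. The inference ``the presence of $E_v$ through $z$ forces an element of level at least $\ell(K)+1$ \dots{} and this finer element lies in $\pq\G K$'' is not justified: the edge part of $\ext(S)$ consists of existing mesh edges near $S$, whose levels are constrained by admissibility \emph{at} $S$, not at $z$ or at $K$, and the face part of $\ext(S)$ crosses a coarse element by design. Your dismissal of the case $z\in\ext_e(T)\cap\ext_e(S)$ as ``impossible in any mesh'' is also wrong: there $z\in\hsk\cap\vsk=\N$ is simply a node, and a horizontal edge-extension crossing a vertical edge-extension at a common node is precisely the kind of violation that does occur in non-analysis-suitable meshes; it must be excluded by the patch condition, not by topology. (A minor point: your parity bookkeeping is reversed --- $\bisect_\mathrm y$, applied at odd levels, inserts horizontal edges and hence creates the horizontal T-junctions ${\dashv},{\vdash}$, while $\bisect_\mathrm x$ at even levels creates ${\bot},{\top}$.) I recommend restructuring the argument as an induction over level-ordered bisections, as in the proof of Proposition~\ref{prp: overlays are admissible}; this collapses your ``main obstacle'' to a single explicit computation in a locally uniform patch.
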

\begin{proof}
We prove the theorem by induction over admissible bisections. We know that the initial mesh $\G_0$ is analysis-suitable because it is a tensor-product mesh without any T-junctions. Consider a sequence $\G_0,\dots,\G_J$ of successive admissible bisections such that $\G_0,\dots,\G_{J-1}$ are analysis-suitable. 
Without loss of generality we shall assume that elements are refined in ascending order with respect to their level, i.e., for $\G_{j+1}=\bisect(\G_j,K_j)$, we assume that $0=\ell(K_0)\le\dots\le\ell(K_{J-1})$. There is such a sequence for any admissible mesh; see the proof of Proposition~\ref{prp: overlays are admissible}.
We have to show that $\G_J$ is analysis-suitable as well.

We denote $K\sei K_{J-1}=[\mu,\mu+\tilde\mu]\times[\nu,\nu+\tilde\nu]\in\G_{J-1}$, and we assume without loss of generality that $\ell(K)$ is even.
The assumption that elements are refined in ascending order with respect to their level implies that no element finer than $K$ has been bisected yet, i.e., 
\begin{equation}\label{eq: ASproof: max ell G}
\max\ell(\G_J)=\ell(K)+1.
\end{equation}
Denote by 
\begin{equation}\label{eq: df Guni}
\Guni k\sei\left\{K'\in\tcup\A\mid\ell(K')=k\right\}\ \in\A
\end{equation}
the $k$-th uniform refinement of $\G_0$. Then $\Guni{\ell(K)+1}$ is a refinement of $\G_J$, in particular 
\begin{equation}\label{eq: ASproof: 1}
\hsk(\G_J)\ \subseteq\ \hsk(\Guni{\ell(K)+1})\ =\ \hsk(\Guni{\ell(K)}),
\end{equation}
since $\ell(K)$ is even. Since $\G_J$ is admissible, all elements in $\pq{\G_J} K$ are at least of level $\ell(K)$ and hence
\begin{equation}\label{eq: ASproof: 2}
\hsk(\G_J)\cap\U K\ \supseteq\ \hsk(\Guni{\ell(K)})\cap\U K.
\end{equation}
and 
\begin{equation}\label{eq: ASproof: 2.5}
\forall\ \tilde K\in\pq{\G_J}K:\quad\size(\ell(\tilde K))\leq\size(\ell(K)) 
\end{equation}
with the level-dependent size
\begin{equation}\label{eq: df size}
\size(\ell(K))\sei(\tilde\mu, \tilde\nu) =\begin{cases}(2^{-\ell(K)/2},2^{-\ell(K)/2})&\text{if $\ell(K)$ even,}\\(2^{-(\ell(K)+1)/2},2^{-(\ell(K)-1)/2})&\text{if $\ell(K)$ odd.}\end{cases}
\end{equation}
Together, \eqref{eq: ASproof: 1} and \eqref{eq: ASproof: 2} read
\begin{equation}\label{eq: ASproof: 3}
\hsk(\G_J)\cap\U K\ =\ \hsk(\Guni{\ell(K)})\cap\U K.
\end{equation}
Consider a T-junction $T\in\mathcal T_J\setminus\mathcal T_{J-1}$ that is generated by the bisection of $K$. Then $T$ is a vertical T-junction on the boundary of $K$, and with \eqref{eq: ASproof: 2.5} follows 
\[\ext(T)\subseteq\bigl\{\mu+\tilde\mu/2\bigr\}\times\bigl[\nu-2^{-\ell(K)/2}\ceilfrac q2,\ \nu+\tilde\nu+2^{-\ell(K)/2}\ceilfrac q2\bigr].\]
Consider an arbitrary horizontal T-junction $\tilde T=(t_1,t_2)\in\mathcal T$. 
We will prove that $\ext(\tilde T)$ does not intersect $\ext(T)$.
From \eqref{eq: ASproof: 1} we conclude that 
$\ext(\tilde T)\subseteq\hsk(\Guni{\ell(K)})$, and \eqref{eq: ASproof: 3} implies that the vertex $\tilde T$ is not in the interior of the $(p,q)$-patch of $K$ and not on its top or bottom boundary, i.e.
\[\tilde T\notin\bigl]\mu-2^{-\ell(K)/2}\floorfrac p2,\ \mu+\tilde\mu+2^{-\ell(K)/2}\floorfrac p2\bigr[\ \times\ \bigl[\nu-2^{-\ell(K)/2}\ceilfrac q2,\ \nu+\tilde\nu+2^{-\ell(K)/2}\ceilfrac q2\bigr].\]
See Figure~\ref{fig: patch without side boundaries} for a sketch.
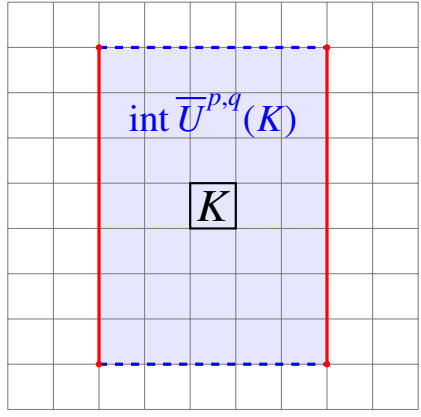
\begin{figure}[t]
\centering
\begin{tikzpicture}[scale=.6]
  \fill[blue!10] (-2,-3) rectangle (3,4);
  \draw[gray] (-4,-4) grid (5,5);
  \node at (.5,.5) {\Large $K$};
  \draw[thick] (0,0) rectangle (1,1);
  \draw[very thick,dashed,blue] (-2,4) coordinate (OL) -- (3,4) coordinate (OR)
                               (-2,-3) coordinate (UL) -- (3,-3) coordinate (UR);
  \draw[very thick,red] (OL)--(UL)  (OR)--(UR);
  \foreach \a in {OR,OL,UR,UL} \fill[red] (\a) circle (2pt);
  \node[blue] at (.5,2.5) {\large $\operatorname{int}\U K$};
\end{tikzpicture}
\caption{Example of the $(p,q)$-patch in a uniform mesh for $p=q=5$. The horizontal T-junction $\tilde T$ may be on a solid red line or outside of $\U K$, but not in the interior of $\U K$ (shaded area) or on the dashed blue lines, which are open at their endpoints.}
\label{fig: patch without side boundaries}
\end{figure}
Assume without loss of generality that $\tilde T$ is on the left side of $K$, this is,
\begin{equation}\label{eq: ASproof: 4}
t_1\leq\mu-2^{-\ell(K)/2}\floorfrac p2. 
\end{equation}
If $\operatorname{type}(\tilde T)={\vdash}$, then the edge-extension $\ext_e(\tilde T)$ points towards $K$ in the sense that 
\begin{align*}
\forall\ (x,t_2)\in\ext(\tilde T)&:\enspace x-t_1\le2^{-\ell(K)/2}\floorfrac p2\stackrel{\eqref{eq: ASproof: 4}}\le \mu-t_1\\
\Leftrightarrow\quad\forall\ (x,t_2)\in\ext(\tilde T)&:\enspace x\le\mu<\mu+\tilde\mu/2.
\end{align*}
This means that $\ext(\tilde T)$ does not intersect $\ext(T)$. See Figure~\ref{fig: extensions do not intersect 1} for an illustration.
\begin{figure}[t]
\centering
\begin{subfigure}[b]{.4\textwidth}
\centering
\begin{tikzpicture}
\fill[blue!10] (0,0) rectangle (3,4);
\draw[ultra thick, red] (-3,1)--(2,1);
\draw (0,0) grid (3,4);
\draw (0,0)--(-3,0)--(-3,4)--(0,4) (-3,2)--(0,2) (-2,0)--(-2,4) (-1,0)--(-1,4) (-3,3)--(-1,3);
\node at (2.5,.5) {\Large $K$};
\draw[thick] (2,0) rectangle (3,1);
\draw[blue, ultra thick, dotted] (2.5,0)--(2.5,4);
\fill (0,1) circle (2pt);
\end{tikzpicture}
\caption{}\label{fig: extensions do not intersect 1}
\end{subfigure}
\qquad
\begin{subfigure}[b]{.4\textwidth}
\centering
\begin{tikzpicture}
\fill[blue!10] (0,0) rectangle (3,4);
\draw[ultra thick, red] (-3,3)--(2,3);
\draw (0,0) grid (3,4);
\draw (0,0)--(-3,0)--(-3,4)--(0,4) (-3,2)--(0,2) (-2,0)--(-2,4) (-1,0)--(-1,4) (-3,3)--(-1,3);
\node at (2.5,.5) {\Large $K$};
\draw[thick] (2,0) rectangle (3,1);
\draw[blue, ultra thick, dotted] (2.5,0)--(2.5,4);
\fill (-1,3) circle (2pt);
\end{tikzpicture}
\caption{}\label{fig: extensions do not intersect 2}
\end{subfigure}
\caption{In both cases, the T-junction extension $\ext(\tilde T)$ (thick red line) does not intersect the set
$\bigl\{\mu+\tilde\mu/2\bigr\}\times\bigl[\nu-2^{-\ell(K)/2}\ceilfrac q2,\ \nu+\tilde\nu+2^{-\ell(K)/2}\ceilfrac q2\bigr]$
(dotted blue line), which includes $\ext(T)$. The patch $\pq{\G_J}K$ is shaded in light blue.}
\end{figure}

If $\operatorname{type}(\tilde T)={\dashv}$, then there is an odd-level element $K'$ on the right side of $\tilde T$, and two finer even-level elements on the left side. Since there are no elements in $\G_J$ with a level higher than $\ell(K)+1$, which is odd, the two elements on the left side of $\tilde T$ have at most level $\ell(K)$, and hence $\ell(K')\le\ell(K)-1$. Consequently, $K'\notin\pq{\G_J}K$, and the length of the intersection of the face extension $\ext_f(\tilde T)$ with the $(p,q)$-patch of $K$ is at most $2^{-\ell(K)/2}\bigl(\ceilfrac p2-1\bigr)\le2^{-\ell(K)/2}\floorfrac p2$. This leads to the same result as the previous case and is illustrated in Figure~\ref{fig: extensions do not intersect 2}.
Since $\tilde T$ was chosen arbitrary, $\G_J$ is analysis-suitable. This concludes the proof.
\end{proof}

\begin{crl}
All admissible meshes provide T-spline blending functions that are non-negative, linearly indepent, and form a partition of unity \cite{AS-Tsplines-of-arb-degree,isogeom-methods}.
Moreover, on each element $K\in\G\in\A$, there are not more than $2(p+1)(q+1)$ T-spline basis functions that have support on $K$ \cite[Proposition~7.6]{isogeom-methods}.
\end{crl}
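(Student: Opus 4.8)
The first three properties are immediate once Theorem~\ref{thm: AS} is available, and require nothing new. By Theorem~\ref{thm: AS} every $\G\in\A$ is analysis-suitable, which by dual compatibility gives linear independence of the blending functions \cite{AS-Tsplines-are-DC,AS-Tsplines-of-arb-degree}; non-negativity and the partition-of-unity property are structural properties of any T-spline space assembled from tensor-product B-splines whose local index vectors reduce, near the boundary of the active region, to the repeated (open) knots recorded in Definition~\ref{df: global indices}, see \cite{AS-Tsplines-of-arb-degree,isogeom-methods}. So for this part I would simply invoke Theorem~\ref{thm: AS} together with the corresponding statements from those references.

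For the support bound, fix $K\in\G\in\A$ and, without loss of generality, assume $\ell(K)$ is even. Since the support of each blending function is a union of closed elements of $\G$, a function $B_T$ has support on $K$ exactly when $K$ is contained in the closed coordinate box $R_T$ spanned by the local index vectors $\xx(T)$ and $\yy(T)$; hence the task is to count active nodes $T$ with $K\subseteq R_T$. I would first localize. If $K\subseteq R_T$, then $R_T$ is at least as large as $K$ in each coordinate direction and is built from skeleton lines of $\G$ meeting a neighbourhood of $K$, so by Lemma~\ref{lma: levels change slowly} and Corollary~\ref{crl: old magic patch} those lines bound elements whose level differs from $\ell(K)$ by at most a constant; in particular $R_T$ cannot be spanned by index lines that are arbitrarily fine near $K$. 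On the other hand $R_T$ reaches only a bounded number (at most $\ceilfrac p2+1$ resp.\ $\ceilfrac q2+1$) of index lines beyond $T$ in each coordinate, so together with the previous observation $T$ is confined to a fixed dilate of the $(p,q)$-patch $\pq\G K$, which by local quasi-uniformity contains only a number of nodes depending on $p$ and $q$. This already yields a $(p,q)$-dependent bound.

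To reach exactly $2(p+1)(q+1)$ I would compare with the two nested uniform meshes $\Guni{\ell(K)}$ and $\Guni{\ell(K)+1}$ of Section~\ref{sec: AS}. Using admissibility in the form of \eqref{eq: ASproof: 1}--\eqref{eq: ASproof: 3}, but applied to the enlarged neighbourhood of $K$ that contains every $R_T$ with $K\subseteq R_T$ rather than to $\U K$, the horizontal and vertical skeletons of $\G$ restricted to that neighbourhood coincide with those of $\Guni{\ell(K)}$ or of $\Guni{\ell(K)+1}$. Consequently every $B_T$ supported on $K$ is one of the tensor-product B-splines of one of these two grids, and on a tensor-product grid of bi-degree $(p,q)$ exactly $(p+1)(q+1)$ B-splines are supported on any single element; the two grids therefore contribute at most $2(p+1)(q+1)$ functions. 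This is also the content of \cite[Proposition~7.6]{isogeom-methods}, whose hypotheses are met thanks to Theorem~\ref{thm: AS}.

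The delicate point — and the only one that is not a direct citation — is the last step: showing that for a node $T$ with $K\subseteq R_T$ the local index vectors $\xx(T),\yy(T)$ really are contiguous stretches of the uniform index set, i.e.\ that no coarser skeleton line of $\G$ intrudes into $R_T$. This is exactly where admissibility (no coarse element in $\pq\G K$, Definition~\ref{df: adm. bisection}), the explicit extension lengths encoded in $\D(\ell(K))$, and the even/odd bookkeeping of Definition~\ref{df: magic patch} have to be combined carefully, parallel to the argument already carried out in the proof of Theorem~\ref{thm: AS}.
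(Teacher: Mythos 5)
Your proposal matches the paper's treatment: the paper gives no proof of this corollary at all, since both parts follow immediately from Theorem~\ref{thm: AS} (admissible implies analysis-suitable) combined with the cited results of \cite{AS-Tsplines-of-arb-degree,isogeom-methods}, in particular \cite[Proposition~7.6]{isogeom-methods} for the $2(p+1)(q+1)$ support bound. Your additional sketch of a direct counting argument via the uniform meshes $\Guni{\ell(K)}$ and $\Guni{\ell(K)+1}$ goes beyond what the paper does and is not needed, since you correctly fall back on the citation in the end.
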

This means that on each element, each T-Spline function communicates only with a finite number of other T-spline functions, independent of the total number of functions. This is an important requirement for sparsity of the linear system to be solved in Finite Element Analysis, in the sense that every row and every column of a corresponding stiffness or mass matrix is a sparse vector.

\section{Overlay}\label{sec: overlay}
This section discusses the coarsest common refinement of two meshes 
$\G_1, \G_2 \in \A$, called \emph{overlay}  and denoted by $\G_1 \otimes \G_2$. 
We prove that the overlay of two admissible meshes is also admissible and has bounded cardinality in terms of the involved meshes. This is a classical result in the context of adaptive simplicial meshes and will be crucial for further analysis of adaptive algorithms (cf.\ Assumption (2.10) in \cite{Axioms-of-Adaptivity}).

\begin{df}[Overlay]
We define the operator $\minset$ which yields all minimal elements of a set that is partially ordered by ``$\subseteq$'', \[\minset(\M) \sei  \bigl\{K\in \M \mid \forall K' \in \M: K' \subseteq K \Rightarrow K' = K\bigr\}.\]
The \emph{overlay} of $\G_1,\G_2 \in \A$ is defined by
\begin{align*}
\G_1 \otimes \G_2 \sei  \minset\bigl(\, \G_1 \cup \G_2 \,\bigr).
\end{align*}
\end{df}

\begin{prp}
$\G_1\otimes\G_2$ is the coarsest refinement 
of $\G_1$ and $\G_2$ in the sense that for any $\hat\G$ being a
refinement of $\G_1$ and $\G_2$, and $\G_1\otimes\G_2$ being a refinement 
of $\hat\G$, it follows that $\hat\G=\G_1\otimes\G_2$.
\end{prp}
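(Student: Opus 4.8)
The plan is to argue in two stages. First I would record that $\G_1\otimes\G_2$ really is a refinement of both $\G_1$ and $\G_2$, so that calling it the \emph{coarsest} one is meaningful; then I would establish the stated minimality, which turns out to be a short consequence of the definition of $\minset$ together with the elementary fact that a refinement sends each of its elements into an element of the coarser mesh.

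The structural input for the first stage is that every mesh in $\A$ is built by dyadic bisection, which yields a tree structure: for any $K_1\in\G_1$ and $K_2\in\G_2$, either $\operatorname{int}K_1\cap\operatorname{int}K_2=\emptyset$ or one of $K_1,K_2$ is contained in the other. (At each level all elements of $\tcup\A$ are translates of one fixed rectangle and tile $\barOmega$, since they form the uniform refinement $\Guni k\in\A$; and a descendant of an element has its interior inside that element's interior. Hence, of two overlapping elements, the finer one is nested in the unique same-level ancestor of the coarser one, which must be the coarser element itself.) Using this dichotomy I would check that $\G_1\otimes\G_2=\minset(\G_1\cup\G_2)$ is a mesh: distinct elements have disjoint interiors directly from minimality plus the dichotomy, and $\tcup(\G_1\otimes\G_2)=\barOmega$ because off the (measure-zero) union of the edges of $\G_1$ and $\G_2$ the unique $\G_1$- and $\G_2$-cells through a point are nested, with the smaller one easily seen to lie in $\minset(\G_1\cup\G_2)$; so $\G_1\otimes\G_2$ has full measure and, being a finite union of closed squares, equals $\barOmega$. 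Finally every $K\in\G_1\otimes\G_2$ belongs to $\G_1$ or to $\G_2$; in the first case $K\subseteq K\in\G_1$, and in the second case $K$ is nested in the $\G_1$-cell it meets and cannot properly contain it (by minimality of $K$ in $\G_1\cup\G_2$), so $K$ is contained in an element of $\G_1$. Thus $\G_1\otimes\G_2$ refines $\G_1$, and symmetrically $\G_2$.

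For the minimality, let $\hat\G$ be any common refinement of $\G_1$ and $\G_2$ such that $\G_1\otimes\G_2$ refines $\hat\G$, and suppose $\hat\G\neq\G_1\otimes\G_2$. Since $\G_1\otimes\G_2$ and $\hat\G$ both partition $\barOmega$, the inclusion $\G_1\otimes\G_2\subseteq\hat\G$ would force equality; hence there is $K\in\G_1\otimes\G_2$ with $K\subsetneq\hat K$ for some $\hat K\in\hat\G$. Now $K\in\G_1\cup\G_2$; say $K\in\G_1$ (the other case is symmetric). Because $\hat\G$ refines $\G_1$, there is $K_1'\in\G_1$ with $\hat K\subseteq K_1'$, and therefore $K\subsetneq\hat K\subseteq K_1'$, so $K\subsetneq K_1'$ with $K,K_1'\in\G_1$ — contradicting that distinct elements of a mesh have disjoint interiors. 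Hence $\hat\G=\G_1\otimes\G_2$.

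The genuinely delicate part is the first stage — verifying that $\G_1\otimes\G_2$ is a well-defined mesh refining each factor — and all of it rests on the nestedness dichotomy for dyadic bisection meshes; I expect this to be quoted as folklore or dispatched quickly via $\Guni k$. The minimality statement, which is the actual content of the proposition, is then essentially immediate.
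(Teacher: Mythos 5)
Your argument is correct, but it takes a genuinely different route from the paper's. The paper does not argue element by element at all: it first records the equivalence that $\hat\G$ refines $\G$ if and only if $\G\otimes\hat\G=\hat\G$, and then derives $\G_1\otimes\G_2=\hat\G$ by a purely algebraic chain of identities for $\minset$ --- chiefly the absorption law $\minset(\minset(A)\cup B)=\minset(A\cup B)$ --- applied to the three hypotheses $\G_1\otimes\hat\G=\hat\G$, $\G_2\otimes\hat\G=\hat\G$ and $(\G_1\otimes\G_2)\otimes\hat\G=\G_1\otimes\G_2$. Your proof instead locates a hypothetical $K\in\G_1\otimes\G_2$ with $K\subsetneq\hat K$ for some $\hat K\in\hat\G$, pushes $\hat K$ into an element of $\G_1$ (or $\G_2$), and contradicts the disjoint-interior property of a partition; this is more elementary and makes the mechanism visible, at the price of invoking the geometry of meshes explicitly where the paper stays at the level of posets. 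You also devote most of your effort to verifying that $\G_1\otimes\G_2$ is a well-defined common refinement via the dyadic nestedness dichotomy; the paper leaves this implicit in the present proposition (it is essentially subsumed by Proposition~\ref{prp: overlays are admissible}), but your verification is sound and does no harm. Both arguments establish the claimed minimality: the paper's is shorter and purely order-theoretic but relies on unproved (though easily checked for finite families) $\minset$ identities, while yours is self-contained and geometric.
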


\begin{proof}
$\G_1$ is a refinement of $\G_2$ if and only if for each $K_1\in\G_1$, there is $K_2\in\G_2$ with $K_1\subseteq K_2$, which is equivalent to $\G_1=\G_1\otimes\G_2$. Given that $\G_1\otimes\hat\G = \hat\G = \G_2\otimes\hat\G$ and $\G_1\otimes\G_2 = (\G_1\otimes\G_2)\otimes\hat\G$, we have
\begin{align*}
\G_1\otimes\G_2
&= (\G_1\otimes\G_2)\otimes\hat\G
= \minset(\G_1\otimes\G_2\cup\hat\G)
\\&= \minset(\minset(\G_1\cup\G_2)\cup\hat\G)
= \minset(\G_1\cup\G_2\cup\hat\G)
\\&= \minset(\G_1\cup\minset(\G_2\cup\hat\G))
= \minset(\G_1\cup\G_2\otimes\hat\G)
\\&= \minset(\G_1\cup\hat\G)
= \G_1\otimes\hat\G
= \hat\G.
\end{align*}\raiseqed
\end{proof}

\begin{prp}\label{prp: overlays are admissible} For any admissible meshes $\G_1,\G_2 \in \A$, the overlay $\G_1\otimes\G_2$ is also admissible.
\end{prp}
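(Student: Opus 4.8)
The goal is to exhibit a sequence of admissible bisections that leads from $\G_0$ to $\G_1\otimes\G_2$. The natural strategy is to work level by level, exploiting the level-dependent bisection rule: since an element of level $k$ is bisected in the $\mathrm x$- or $\mathrm y$-direction depending only on the parity of $k$, any two admissible meshes agree on "how" each dyadic box is split, so $\G_1\otimes\G_2$ is again a box mesh of the same dyadic type. First I would observe that for each $K\in\G_1\otimes\G_2$ there is $i\in\{1,2\}$ with $K\in\G_i$; thus $\G_1\otimes\G_2$ is a common refinement, and (by the previous proposition) the coarsest one. The real content is admissibility.

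\textbf{Key steps.} I would proceed by induction on $\overline L:=\max\ell(\G_1\otimes\G_2)$, or more conveniently on the total number of bisections needed. Take any element $K\in\G_1\otimes\G_2$ of \emph{minimal} level among those not already in $\G_0$ that still need to be created; equivalently, re-order the bisections producing $\G_1\otimes\G_2$ in ascending order of level, exactly as invoked in the proof of Theorem~\ref{thm: AS}. The crucial claim is: if $K\in\G_1\otimes\G_2$ has a parent $\hat K$ that we are about to bisect, then that bisection is admissible in the current intermediate mesh, i.e.\ every $K'$ in the $(p,q)$-patch of $\hat K$ has level $\ge\ell(\hat K)$. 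To see this, note $K$ lies in $\G_i$ for some $i$, hence $\hat K$ was bisected during an admissible refinement sequence producing $\G_i$; at that moment every element of $\pq{}{\hat K}$ had level $\ge\ell(\hat K)$, and since levels only increase under refinement, the same holds in $\G_i$ and therefore — using Corollary~\ref{crl: old magic patch} to phrase the patch via the geometric region $\U{\hat K}$, which is insensitive to which finer mesh we look at — in $\G_1\otimes\G_2$ and in every intermediate mesh built so far (which refines $\G_0$ but is coarser than $\G_1\otimes\G_2$, hence sits between them in the refinement order exactly on the region $\U{\hat K}$). Repeating this for all elements in ascending level order yields the desired admissible sequence $\G_0=\tilde\G_0,\tilde\G_1,\dots,\tilde\G_J=\G_1\otimes\G_2$.

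\textbf{Main obstacle.} The delicate point is the bookkeeping that guarantees, at the moment we bisect $\hat K$, the patch $\pq{\tilde\G_m}{\hat K}$ in the \emph{intermediate} mesh $\tilde\G_m$ is still fine enough — a priori $\tilde\G_m$ could be coarser than $\G_i$ on the relevant region and thus violate $\ell(K')\ge\ell(\hat K)$. This is resolved precisely by the ascending-level ordering: when it is $\hat K$'s turn, all elements of level $<\ell(\hat K)$ that will ever be refined on the way to $\G_1\otimes\G_2$ have already been refined, so $\tilde\G_m$ restricted to $\U{\hat K}$ already coincides with $\G_1\otimes\G_2$ restricted there (all elements there have level $\ge\ell(\hat K)$, since $\G_1\otimes\G_2$ does — as it refines the admissible mesh $\G_i$ containing $K$, whose own patch of $\hat K$ was fine), and Corollary~\ref{crl: old magic patch} lets us read off $\pq{\tilde\G_m}{\hat K}$ from that region alone. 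Making this "already coincides on $\U{\hat K}$" statement precise — including checking that distinct elements $\hat K$ of the same level are bisected in an order that is irrelevant, just as in Definition~\ref{df: adm. bisection} — is the one genuinely careful argument; the rest is routine.
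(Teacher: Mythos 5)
Your proposal is essentially the paper's proof: the paper bisects the set of all strict ancestors of overlay elements in ascending level order, disposes of your deferred bookkeeping (its Claim~1, that each such ancestor is still present in the intermediate mesh when its level is processed) by a short induction, and establishes admissibility of each level batch by exactly your contradiction argument — a surviving coarse element of the patch would have to survive into the overlay, yet the overlay refines the $\G_i$ in which the patch of $\hat K$ is fine, so via Corollary~\ref{crl: old magic patch} no such element exists. The only slip is your claim that the intermediate mesh already \emph{coincides} with $\G_1\otimes\G_2$ on $\U{\hat K}$ (finer bisections there may still be pending), but the weaker statement you actually use — that all elements there have level $\ge\ell(\hat K)$ — is the correct one and is what the paper proves.
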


\begin{proof}
Consider the set of admissible elements which are coarser than elements of the overlay,
\[\M \sei \bigl\{ K \in {\textstyle\bigcup} \A \mid \exists K' \in \G_1\otimes\G_2: K' \subsetneqq K\bigr\}.\]
Then $\G_1\otimes\G_2$ is the coarsest partition of $\overline\Omega$ into elements from $\tcup\A$ that refines all elements occuring in $\M$.
Note also that $\M$ satisfies
\begin{equation}\label{eq: M is a filter}
\forall\ K,K'\in\tcup\A:\enspace K\in\M\wedge K\subseteq K'\Rightarrow K'\in\M.
\end{equation}
For $j=0,\dots,J=\max\ell(\M )$ and $\bar\G_0\sei\G_0$, set 
\begin{align}
\M_j&\coloneqq\{K\in\M \mid \ell(K)=j\}\notag \\
\text{and}\quad\bar\G_{j+1}&\coloneqq\bisect(\bar\G_j,\M_j).\label{eq: overlay-1}
\end{align}

\noindent\emph{Claim 1. For all $j\in\{0,\dots,J\}$ holds $\M_j\subseteq\bar\G_j$.}
This is shown by induction over $j$. For $j=0$, the claim is true because all admissible elements with zero level are in $\G_0$. Assume the claim to be true for $0,\dots,j-1$ and assume for contradiction that there exists $K\in\M_j\setminus\bar\G_j$. 

Since $K$ has not been bisected yet, $\bar\G_j$ does not contain any $K'$ with $K'\subset K$. Consequently, there exists $K'\in\bar\G_j$ with $K\subset K'$ and hence $\ell(K')<\ell(K)=j$. 
From \eqref{eq: M is a filter} follows $K'\in\M_{\ell(K')}\in\M$, and $\ell(K')<j$ implies that $K'$ has been refined in a previous step. This yields $K'\notin\bar\G_j$, which is the desired contradiction.\bigskip

\noindent\emph{Claim 2. For all $j\in\{0,\dots,J\}$, the bisection \eqref{eq: overlay-1} is admissible.}
Consider $K\in\M_j$ for an arbitrary $j$. By definition of $\M$, there exists $K'\in\G_1\otimes\G_2\subseteq\G_1\cup\G_2$ with $K'\subsetneqq K$. Without loss of generality, we assume $K'\in\G_1$. Since $\G_1\in\A$, there is a sequence of admissible meshes $\G_0=\G_{1\mid 0},\G_{1\mid 1},\dots,\G_{1\mid \mathcal I}=\G_1$ and $i\in\{0,\dots,\mathcal I-1\}$ such that $\G_{1\mid i+1}=\bisect(\G_{1\mid i},\{K\})$. The fact that $\G_{1\mid i+1}\in\A$ (and that levels do not decrease during refinement) implies 
\begin{equation}\label{eq: overlay: pqG1 is fine}
\min\ell(\pq{\G_1}K)\ge\min\ell(\pq{\G_{1\mid i}}K)\ge\ell(K)=j.
\end{equation}
Assume for contradiction that there is $\tilde K\in\pq{\G_j}K$ with $\ell(\tilde K)<\ell(K)=j$. This implies $\tilde K\notin\M$ (otherwise $\tilde K$ would have been bisected in a previous step). Moreover, \eqref{eq: overlay: pqG1 is fine} and Corollary~\ref{crl: old magic patch} yield that there is $\tilde K'\in\pq{\G_1}K$ with $\tilde K'\subset \tilde K$ and hence $\tilde K\in\M$ in contradiction to $\tilde K\notin\M$ from before. This proves Claim~2.

The proven claims show $\M_j=\bar\G_j\setminus\bar\G_{j+1}$ for all $j=0,\dots,J$ and hence 
for the admissible mesh $\bar\G_{J+1}$ that there is no coarser partition of $\overline\Omega$ into elements from $\tcup\A$ that refines all elements in $\M $. This property defines a unique partition and hence 
\[\G_1\otimes\G_2=\bar\G_{J+1}\in\A .\]
\raiseqed
\end{proof}

\begin{lma}\label{lma: bounded overlay}
For all $\G_1,\G_2\in\A$ holds \[\#\left(\G_1\otimes\G_2\right)+\#\G_0\le\#\G_1+\#\G_2\ .\]
\end{lma}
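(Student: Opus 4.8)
The plan is to exploit the fact, established in the proof of Proposition~\ref{prp: overlays are admissible}, that $\G_1\otimes\G_2$ is itself obtained from $\G_0$ by a sequence of admissible bisections, and that every bisection increases the element count by exactly one. More precisely, for any admissible mesh $\G$ reached from $\G_0$ by $k$ single-element bisections one has $\#\G=\#\G_0+k$; equivalently $k=\#\G-\#\G_0$ counts the number of bisections, and this number is independent of the particular admissible bisection sequence chosen (it is determined by the multiset of elements ever bisected, or simply by cardinality). So it suffices to produce a bisection sequence from $\G_0$ to $\G_1\otimes\G_2$ whose length is at most $(\#\G_1-\#\G_0)+(\#\G_2-\#\G_0)$, i.e.\ the sum of the numbers of bisections used to produce $\G_1$ and $\G_2$ individually.

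First I would introduce, for an admissible mesh $\G$, the \emph{bisection count} $b(\G)\sei\#\G-\#\G_0$, and record that $b(\G_0)=0$, that $b(\bisect(\G,K))=b(\G)+1$, and hence that $b(\G)$ equals the length of any admissible bisection sequence from $\G_0$ to $\G$. The inequality to be proven is then simply $b(\G_1\otimes\G_2)\le b(\G_1)+b(\G_2)$. Next I would recall from the proof of Proposition~\ref{prp: overlays are admissible} the set $\M=\{K\in\tcup\A\mid\exists K'\in\G_1\otimes\G_2:K'\subsetneqq K\}$ of all admissible elements strictly refined by the overlay; that proof shows $\G_1\otimes\G_2$ is obtained from $\G_0$ by bisecting exactly the elements of $\M$ (once each, in order of ascending level), so $b(\G_1\otimes\G_2)=\#\M$. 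Similarly, writing $\M_i\sei\{K\in\tcup\A\mid\exists K'\in\G_i:K'\subsetneqq K\}$ for $i=1,2$ — the elements strictly refined in passing from $\G_0$ to $\G_i$ — one gets $b(\G_i)=\#\M_i$, because $\G_i$ is likewise the coarsest partition refining all of $\M_i$ and is reached by bisecting each element of $\M_i$ once. Thus the claim reduces to the purely combinatorial inclusion
\[\M\ \subseteq\ \M_1\cup\M_2,\]
which would in fact give the stronger $\#\M\le\#(\M_1\cup\M_2)\le\#\M_1+\#\M_2$.

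To prove $\M\subseteq\M_1\cup\M_2$, take $K\in\M$, so there is $K'\in\G_1\otimes\G_2=\minset(\G_1\cup\G_2)$ with $K'\subsetneqq K$. That $K'$ lies in $\G_1$ or in $\G_2$; say $K'\in\G_1$. Then $K'\subsetneqq K$ with $K'\in\G_1$ and $K\in\tcup\A$ is exactly the statement $K\in\M_1$. Hence $K\in\M_1\cup\M_2$, as desired. Putting the pieces together, $b(\G_1\otimes\G_2)=\#\M\le\#\M_1+\#\M_2=b(\G_1)+b(\G_2)$, which unwinds to $\#(\G_1\otimes\G_2)-\#\G_0\le(\#\G_1-\#\G_0)+(\#\G_2-\#\G_0)$, i.e.\ $\#(\G_1\otimes\G_2)+\#\G_0\le\#\G_1+\#\G_2$.

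The one point needing care — the main obstacle — is the bookkeeping identity $b(\G)=\#(\text{elements strictly refined in producing }\G)$, i.e.\ that each element of $\M$ (resp.\ $\M_i$) is bisected exactly once along the way and that these are the only bisections performed. For $\G_1\otimes\G_2$ this is precisely what Claims~1 and~2 in the proof of Proposition~\ref{prp: overlays are admissible} establish (they show $\M_j=\bar\G_j\setminus\bar\G_{j+1}$, so the bisected elements at level $j$ are exactly $\M_j$, each once). For $\G_i$ itself the same argument applies verbatim with $\M$ replaced by $\M_i$: the set $\M_i$ is a level-filter in the sense of \eqref{eq: M is a filter}, $\G_i$ is the coarsest admissible partition refining all of $\M_i$, and the level-ascending bisection of $\M_i$ produces it. If one prefers to avoid re-running that argument, an alternative is to note directly that in \emph{any} admissible bisection sequence from $\G_0$ to $\G_i$ the set of bisected elements is exactly $\M_i$: an element $K$ is bisected somewhere in the sequence iff some strict descendant of $K$ lies in $\G_i$, iff $K\in\M_i$; and no element is bisected twice since levels strictly increase under $\child$. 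Either way the rest is immediate.
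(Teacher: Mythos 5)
Your proof is correct, but it takes a genuinely different route from the paper. The paper argues locally: for each initial element $K\in\G_0$ it restricts all three meshes to $K$ via $\G(K)\sei\{K'\in\G\mid K'\subseteq K\}$ and shows $\#(\G_1\otimes\G_2)(K)+1\le\#\G_1(K)+\#\G_2(K)$ by a two-case argument (either $\G_1(K)$ already coincides with the overlay restricted to $K$, in which case $\#\G_2(K)\ge1$ supplies the $+1$; or some $K'\in\G_1(K)$ is absent from the overlay and can be discarded from the covering union $\G_1\cup\G_2$). You instead count bisections: since each single bisection raises the cardinality by exactly one, $\#\G-\#\G_0$ equals the number of elements ever bisected, which by the forest structure is exactly $\#\M(\G)$ with $\M(\G)=\{K\in\tcup\A\mid\exists K'\in\G:K'\subsetneqq K\}$, and the lemma reduces to the one-line inclusion $\M(\G_1\otimes\G_2)\subseteq\M(\G_1)\cup\M(\G_2)$, which follows from $\G_1\otimes\G_2\subseteq\G_1\cup\G_2$. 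Your identification of geometric strict containment with strict ancestry in the bisection tree is valid here because the level-alternating bisection pattern is fixed, so every admissible element has a unique ancestor at each coarser level; it would be worth one sentence to make that explicit. The trade-off: the paper's proof is entirely self-contained and needs nothing about how the meshes were generated, whereas yours leans on the bisection-forest bookkeeping (essentially the argument familiar from newest-vertex bisection) but is conceptually cleaner and extends immediately to overlays of $n$ meshes, giving $\#(\G_1\otimes\dots\otimes\G_n)+(n-1)\#\G_0\le\sum_{i=1}^n\#\G_i$.
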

\begin{proof}
By definition, the overlay is a subset of the union of the two involved meshes, i.e., 
\begin{equation}\label{eq: overlay subset of union}
\G_1\otimes\G_2\ =\ \minset(\G_1\cup \G_2)\ \subseteq\ \G_1\cup\G_2\ .
\end{equation}
Define the shorthand notation $\G(K)\sei\{K'\in\G\mid K'\subseteq K\}$.
To prove the lemma, it suffices to show \[\forall\ K\in\G_0,\quad \#(\G_1\otimes\G_2)(K)+1\le \#\G_1(K)+\#\G_2(K)\ .\]
\emph{Case 1.} $\G_1(K)\subseteq(\G_1\otimes\G_2)(K)$. This implies equality and hence
\[\#(\G_1\otimes\G_2)(K)+1 = \#\G_1(K)+1 \le \#\G_1(K)+\#\G_2(K)\ .\]
\emph{Case 2.} There exists $K'\in\G_1(K)\setminus(\G_1\otimes\G_2)(K)$. Then $(\G_1\otimes\G_2)(K)=(\G_1\otimes\G_2)(K)\setminus\{K'\}$ and hence
\begin{align*}
\#(\G_1\otimes\G_2)(K)\ &=\ \#\left((\G_1\otimes\G_2)(K)\setminus\{K'\}\right)
\ \stackrel{\eqref{eq: overlay subset of union}}\le\ \#\left((\G_1\cup\G_2)(K)\setminus\{K'\}\right)\\[.3em]
\ &\le\ \#(\G_1\setminus\{K\})+\#\G_2(K)
\ =\ \#\G_1(K)-1+\#\G_2(K).
\end{align*}
\end{proof}

\section{Nestedness}\label{sec: nestedness}
This section investigates the nesting behavior of the T-spline spaces corresponding to admissible meshes.
In order to prove that nested admissible meshes induce nested spline spaces, we make use of Theorem~6.1 from \cite{ASTS-characterization}. Before presenting the Theorem, we briefly introduce necessary notations.

\begin{df}[Refinement relation]
For any partitions $\G_1,\G_2$ of $\barOmega$, we introduce the refinement relation ``$\preceq$'', which is defined using the overlay (see Section~\ref{sec: overlay}),
\[\G_1\preceq\G_2 \enspace\Leftrightarrow\enspace \G_1\otimes\G_2=\G_2.\]
\end{df}

\begin{crl}\label{crl: refine skeleton}%
Denote the \emph{skeleton} of a mesh $\G$ by $\sk(\G)\sei\hsk(\G)\cup\vsk(\G)$. Then for rectangular partitions $\G_1,\G_2$ of $\barOmega$ holds the equivalence
\[\G_1\preceq\G_2 \enspace\Leftrightarrow\enspace \sk(\G_1)\subseteq\sk(\G_2).\]
\end{crl}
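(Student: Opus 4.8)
The plan is to prove the two implications of the equivalence $\G_1\preceq\G_2 \Leftrightarrow \sk(\G_1)\subseteq\sk(\G_2)$ separately, using the characterization of $\preceq$ via the overlay and the elementary observation that a rectangular mesh is determined by its skeleton.

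First I would establish the easy direction. If $\G_1\preceq\G_2$, then by definition $\G_1\otimes\G_2=\minset(\G_1\cup\G_2)=\G_2$. Each $K_2\in\G_2$ therefore lies in $\minset(\G_1\cup\G_2)$, so $K_2$ is either an element of $\G_1$ or is contained in some element of $\G_1$ (it cannot strictly contain an element of $\G_1\cup\G_2$, as that would contradict minimality). In either case, every element of $\G_2$ is a subset of some element of $\G_1$, i.e.\ $\G_2$ is a refinement of $\G_1$ in the usual sense. Since refining a rectangular mesh only adds edges and never removes any, this gives $\sk(\G_1)\subseteq\sk(\G_2)$; concretely, each edge of $\G_1$ is a union of edges of the elements of $\G_2$ contained in the two elements of $\G_1$ adjacent to it, hence lies in $\sk(\G_2)$.

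For the converse, assume $\sk(\G_1)\subseteq\sk(\G_2)$. I would argue that $\G_2$ refines $\G_1$: take $K_2\in\G_2$ and let $K_1$ be an element of $\G_1$ whose interior meets the interior of $K_2$. If $K_2\not\subseteq K_1$, then part of an edge of $K_1$ passes through the interior of $K_2$; but that edge lies in $\sk(\G_1)\subseteq\sk(\G_2)$, contradicting the fact that $K_2$ is an element of $\G_2$ and hence has no edge of $\G_2$ in its interior. So $K_2\subseteq K_1$, and $\G_2$ is a refinement of $\G_1$. By the first proposition of Section~\ref{sec: overlay} (characterizing the overlay as the coarsest common refinement), or directly from the observation in that proof that $\G_1$ is a refinement of $\G_2$ iff $\G_1=\G_1\otimes\G_2$, it follows that $\G_1\otimes\G_2=\G_2$, that is, $\G_1\preceq\G_2$.

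The main obstacle is the interface between the combinatorial notion ``$\G_2$ is a refinement of $\G_1$'' (every element of $\G_2$ sits inside an element of $\G_1$) and the geometric notion ``$\sk(\G_1)\subseteq\sk(\G_2)$''; this is where one must use that the meshes are genuine partitions of $\barOmega$ into axis-aligned rectangles, so that the skeleton both determines the partition and is compatible with containment. Everything else is bookkeeping: unwinding the definition of $\preceq$ and invoking the already-proven equivalence $\G_1\text{ refines }\G_2\Leftrightarrow\G_1=\G_1\otimes\G_2$ from the proof of the first proposition in Section~\ref{sec: overlay}.
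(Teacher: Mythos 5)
Your argument is correct, and it is essentially the intended one: the paper states this corollary without proof, treating it as an immediate consequence of the fact (recorded in the proof of the first proposition of Section~\ref{sec: overlay}) that $\G_1\preceq\G_2$ is equivalent to every element of $\G_2$ being contained in an element of $\G_1$, combined with the observation that for rectangular partitions of $\barOmega$ this elementwise containment is equivalent to inclusion of skeletons. One small caveat: in the forward direction, your parenthetical ``it cannot strictly contain an element of $\G_1\cup\G_2$, as that would contradict minimality'' does not by itself yield that $K_2$ is contained in an element of $\G_1$ (two rectangles from different partitions can overlap without either containing the other); the containment should instead be taken directly from the quoted equivalence $\G_1\otimes\G_2=\G_2\Leftrightarrow\G_2$ refines $\G_1$, which you do invoke, so the overall proof stands.
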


\begin{df}[extended mesh]
Given a rectangular partition $\G$ of $\barOmega$, denote by $\ext(\G)$ the union of all T-junction extensions in the mesh $\G$. 
Then the \emph{extended mesh} $\G^{\ext[]}$ is defined as the unique rectangular partition of $\barOmega$ such that
\[\sk(\G^{\ext[]}) = \sk(\G)\cup\ext(\G).\]
\end{df}

\begin{df}[mesh perturbation]
Given a partition $\G$ of $\barOmega$ into axis-aligned rectangles, we define by $\ptb(\G)$ the set of all continuous and invertible mappings $\delta: \barOmega\to\barOmega$ such that the corners $(0,0)$, $(M,0)$, $(M,N)$, $(0,N)$ are fixed points of $\delta$ and
\[\delta(\G)=\bigl\{\delta(K)\mid K\in\G\bigr\}\]
is also a partition of $\barOmega$ into axis-aligned rectangles. 
\end{df}
This definition differs from the definition of pertubations given in \cite{ASTS-characterization}, which we found difficult to reproduce in a formal manner. The subsequent Proposition~\ref{prp: worst perturbation} shows that our definition includes the understanding of perturbations from \cite{ASTS-characterization}.

\begin{rem}
For $\delta\in\ptb(\G)$, the perturbed mesh $\delta(\G)$ has the skeleton $\sk(\delta(\G))=\delta(\sk(\G))$. Hence, global index vectors can be defined according to Definition~\ref{df: global indices}, and since all T-junctions in $\delta(\G)$ are of axis-parallel types ($\vdash,\bot,\dashv,$ or $\top$), we can also apply Definition~\ref{df: TJ-extensions} for T-junction extensions in the perturbed mesh. Note in particular that the perturbation $\delta$ does not in general map T-junction extensions to the corresponding extensions in the perturbed mesh, i.e., if $T$ is a T-junction in $\G$, then 
\[\ext_{\delta(\G)}(\delta(T))\ \neq\ \delta(\ext_{\G}(T)).\]
\end{rem}

\begin{prp}\label{prp: worst perturbation}%
For any rectangular partition $\G$ of $\barOmega$, there is some $\delta^*\in\ptb(\G)$ such that any two T-junction face extensions in $\delta^*(\G)$ are disjoint.
\end{prp}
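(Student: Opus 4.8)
\textbf{Proof proposal for Proposition~\ref{prp: worst perturbation}.}

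The plan is to construct $\delta^*$ explicitly, as a product of a horizontal and a vertical perturbation, by rescaling the spacing of the vertical and horizontal grid lines so that they become incommensurable. First I would enumerate the distinct $x$-coordinates $0 = x_0 < x_1 < \dots < x_r = M$ occurring as vertical edges of $\G$, and the distinct $y$-coordinates $0 = y_0 < y_1 < \dots < y_s = N$ of horizontal edges. I would pick a piecewise-affine increasing bijection $\phi\colon[0,M]\to[0,M]$ fixing $0$ and $M$ that sends $x_i$ to a new value $x_i^*$, and similarly an increasing bijection $\psi\colon[0,N]\to[0,N]$ fixing the endpoints and sending $y_j$ to $y_j^*$, and set $\delta^*(a,b) \sei (\phi(a),\psi(b))$. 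This is continuous and invertible, fixes the four corners, and maps $\G$ to another axis-aligned rectangular partition because $\phi$ and $\psi$ act coordinatewise on the grid lines; hence $\delta^* \in \ptb(\G)$.

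The heart of the matter is to choose the target coordinates $x_i^*$ and $y_j^*$ so that, in $\delta^*(\G)$, no two face extensions of the same orientation overlap on a segment of positive length. Recall from Definition~\ref{df: TJ-extensions} that a horizontal face extension emanating from a T-junction at $(x_i^*, y_j^*)$ of type $\dashv$ or $\vdash$ runs horizontally along $\{y = y_j^*\}$ and terminates at the $\lceil p/2\rceil$-th vertical grid line encountered in the appropriate direction; two such extensions can overlap only if they lie on the \emph{same} horizontal line $\{y = y_j^*\}$. So it suffices to arrange, for each fixed $j$, that the horizontal face extensions living on the line $y = y_j^*$ are pairwise disjoint; and symmetrically for vertical face extensions on each line $x = x_i^*$. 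The key observation is that whether two face extensions on a common horizontal line overlap is determined entirely by the combinatorics of which vertical grid lines are ``active'' (i.e.\ actually cross that horizontal line) near the two T-junctions — data that is invariant under \emph{any} choice of increasing $\phi$ — \emph{except} in the degenerate situation where a face extension ends exactly where another begins or where endpoints coincide because of an accidental equality of coordinates. I would therefore choose $\phi$ to be a generic small perturbation of the identity: concretely, pick $x_i^* = x_i + \epsilon_i$ with $\epsilon_0 = \epsilon_r = 0$ and the remaining $\epsilon_i$ chosen so that the finitely many ``bad'' linear equations among the $x_i^*$ (equations of the form $x_a^* = x_b^*$ for the finitely many pairs that could make two horizontal extensions abut or a T-junction land on a foreign extension's endpoint) are all violated; this is possible since each bad equation excludes only a measure-zero set of $(\epsilon_i)$ and there are finitely many of them. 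The same for $\psi$ and the $y_j^*$.

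The main obstacle I anticipate is the bookkeeping that shows a \emph{generic} choice actually destroys \emph{all} overlaps, not merely the coincidences of endpoints: one must argue that two distinct horizontal face extensions on the same line, once their endpoints are in ``general position'' relative to each other, cannot share an interval — i.e.\ that overlap of positive length forces one of the finitely many forbidden coordinate equalities. Here I would use that each face extension $\ext_f(T)$ is a half-open interval $\bigl]x_{i_0}^*, x_{i_{\lceil p/2\rceil}}^*\bigr] \times \{y_j^*\}$ whose two endpoints are among the perturbed grid coordinates $\{x_0^*, \dots, x_r^*\}$; two such half-open intervals on the same horizontal line either are nested, abut, or are disjoint, and the nested/abutting cases translate into equalities among the $x_i^*$ — but a strict nesting $\bigl]x_a^*,x_b^*\bigr]\subsetneq\bigl]x_c^*,x_d^*\bigr]$ would mean two T-junctions of the \emph{same} horizontal type with overlapping index windows share the line $y=y_j^*$, and one checks from the definition of the extension (which takes $\lceil p/2\rceil$ \emph{consecutive} entries of $\XX(y_j^*)$) that this can happen only if the two T-junctions have the same first coordinate, i.e.\ $x_a^* = x_c^*$, again a forbidden equality. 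Hence after the generic perturbation no positive-length overlap survives, and the analogous argument in the $y$-direction completes the proof. I would close by remarking that $\phi,\psi$ can be taken arbitrarily close to the identity, so $\delta^*$ is a small perturbation, matching the informal notion in \cite{ASTS-characterization}.
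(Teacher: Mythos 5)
There is a genuine gap here, and it is not a repairable bookkeeping issue: a coordinatewise product map $\delta^*(a,b)=(\phi(a),\psi(b))$ with $\phi,\psi$ increasing bijections can \emph{never} separate two intersecting face extensions. The reason is that the extension of a T-junction (Definition~\ref{df: TJ-extensions}) depends only on the order structure of the global index sets $\XX(y)$, $\YY(x)$, and a monotone coordinatewise bijection preserves this structure exactly: the line $y=y_j$ maps to the line $y=\psi(y_j)$, the set $\XX$ of that line maps to $\phi(\XX(y_j))$ with the same ordering, the selected $p+1$ consecutive indices map to the images of the originally selected ones, and hence $\ext_{\delta^*(\G)}(\delta^*(T))=\delta^*\bigl(\ext_\G(T)\bigr)$ for every T-junction $T$. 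Since $\delta^*$ is a bijection of $\barOmega$, two face extensions intersect in $\delta^*(\G)$ if and only if they intersected in $\G$; your $\delta^*$ accomplishes nothing. Concretely, two vertical T-junctions $T_1=(t_0,t_1)$, $T_2=(t_0,t_2)$ with overlapping face extensions lie on the \emph{same} vertical grid line, and your map sends both to the line $x=\phi(t_0)$; the overlap you need to destroy is not caused by an ``accidental equality'' between two \emph{distinct} grid coordinates (which $\phi$, being injective, could neither create nor remove -- so your genericity argument over the $\epsilon_i$ is vacuous), but by two T-junctions sharing one and the same grid line, a combinatorial fact no product map can change. This is precisely why the paper inserts the Remark that for useful perturbations $\ext_{\delta(\G)}(\delta(T))\neq\delta(\ext_\G(T))$.

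The fix requires a genuinely two-dimensional, shear-like map, which is what the paper constructs: given $T_1=(t_0,t_1)$ and $T_2=(t_0,t_2)$ with intersecting vertical face extensions, it picks a subinterval $(t_1,t_{1.5})$ of $\{x=t_0\}$ free of vertical skeleton and defines $\delta_{T_1T_2}$ to move the line $x=t_0$ to $x=t_0-\varepsilon$ below $t_1$ and to $x=t_0+\varepsilon$ above $t_{1.5}$, interpolating in between; after this, $T_1$ and $T_2$ have \emph{different} $x$-coordinates, so their face extensions lie on different vertical lines and cannot meet. Finitely many such local perturbations are then composed. (As a secondary point, your combinatorial claim that two half-open face extensions on a common line ``either are nested, abut, or are disjoint'' and that nesting forces equal first coordinates is also false -- two $\dashv$-type T-junctions at nearby distinct $x$-positions on the same horizontal line can have properly overlapping, non-nested extensions -- but this is moot given the structural problem above.)
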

In the context of \cite{ASTS-characterization}, this means that $\delta^*(\G)$ has no crossing vertices and no overlap vertices.

\begin{proof}
If all T-junction extensions in $\G$ are pairwise disjoint, then $\delta^*$ is the identity map. If there exist T-junctions $T_1,T_2$ in $\G$ with intersecting face extensions, then $T_1$ and $T_2$ are either both vertical or both horizontal T-junctions.
Assume w.l.o.g.\ that $T_1$ and $T_2$ are vertical T-junctions. Since their (vertical) face extensions overlap, both T-junctions have the same $x$-coordinate $t_0$. Let $T_1=(t_0,t_1)$ and $T_2=(t_0,t_2)$, and assume $t_1<t_2$. There exists $t_{1.5}$ with $t_1\le t_{1.5}\le t_2$ such that at least one of the open segments $\{t_0\}\times(t_1,t_{1.5})$ and $\{t_0\}\times(t_{1.5},t_2)$ does not intersect with the vertical skeleton $\vsk(\G)$. Assume that $\{t_0\}\times(t_1,t_{1.5})\cap\vsk(\G)=\emptyset$ and define 
\begin{align*}
\barOmega_{x=t_0}&\sei\left\{(x,y)\in\barOmega\mid x=t_0\right\}\\
\text{and}\enspace\G_{x=t_0}&\sei\left\{K\in\G\mid K\cap \barOmega_{x=t_0}\neq\emptyset\right\}.
\end{align*}
Let $h$ be the length of the shortest edge in $\G$, and set $\varepsilon\sei h/2$. 
We define $\delta_{T_1T_2}$ by
\[\delta_{T_1T_2}(x,y) = 
\begin{cases}
(x,y) &\text{if }(x,y)\in \bigcup(\G\setminus\G_{x=t_0})\\
(x-\varepsilon,y) & \text{if }x=t_0 \text{ and } y<t_1 \\
(x+\varepsilon,y) & \text{if }x=t_0 \text{ and } y>t_{1.5}\\
\bigl(x+\tfrac{\varepsilon\,(2y-t_1-t_{1.5})}{t_{1.5}-t_1},y\bigr) & \text{if }x=t_0 \text{ and } t_1\le y\le t_{1.5}
\end{cases}
\]
and elsewhere by horizontal linear interpolation, which is illustrated in Figure~\ref{fig: perturbation example}.
The map $\delta_{T_1T_2}$ then satisfies the following properties.
\begin{enumerate}
\item $\delta_{T_1T_2}$ is in $\ptb(\G)$.
\item The T-junction extensions of $\delta_{T_1T_2}(T_1)$ and $\delta_{T_1T_2}(T_2)$ do not intersect.
\item $\delta_{T_1T_2}$ does not lead to intersecting of T-junction extensions that did not intersect in the unperturbed mesh $\G$.
\end{enumerate}
\begin{figure}[ht]
\centering
\begin{subfigure}[b]{.4\textwidth}
\centering
\begin{tikzpicture}[scale=1.25]
\coordinate (0) at (1.5,0);
\coordinate (1) at (1.5,1);
\coordinate (2) at (1.5,2);
\coordinate (3) at (1.5,3);
\foreach \a in {.25,.5,.75,1}
{
  \draw[red!50] ($\a*(0) + (1,0)-\a*(1,0)$)--($\a*(1) + (1,1)-\a*(1,1)$)--($\a*(2) + (1,2)-\a*(1,2)$)--($\a*(3) + (1,3)-\a*(1,3)$);
  \draw[red!50] ($\a*(0) + (2,0)-\a*(2,0)$)--($\a*(1) + (2,1)-\a*(2,1)$)--($\a*(2) + (2,2)-\a*(2,2)$)--($\a*(3) + (2,3)-\a*(2,3)$);
}
\foreach \a in {1,...,24}
 \draw[red!50] (1,.125*\a)--(2,.125*\a);
\fill[gray!40] (0,0) rectangle (1,3) (2,0) rectangle (3,3);
\draw[thick] (0,0) grid (3,3)  (.5,1)--(.5,2) (0)--(1) (2)--(3) (2.5,0)--(2.5,2);
\end{tikzpicture}
\caption{The unperturbed mesh $\G$.}
\end{subfigure}
\begin{subfigure}[b]{.4\textwidth}
\centering
\begin{tikzpicture}[scale=1.25]
\coordinate (0) at (1.25,0);
\coordinate (1) at (1.25,1);
\coordinate (2) at (1.75,2);
\coordinate (3) at (1.75,3);
\foreach \a in {.2,.4,.6,.8,1}
{
  \draw[red!50] ($\a*(0) + (1,0)-\a*(1,0)$)--($\a*(1) + (1,1)-\a*(1,1)$)--($\a*(2) + (1,2)-\a*(1,2)$)--($\a*(3) + (1,3)-\a*(1,3)$);
  \draw[red!50] ($\a*(0) + (2,0)-\a*(2,0)$)--($\a*(1) + (2,1)-\a*(2,1)$)--($\a*(2) + (2,2)-\a*(2,2)$)--($\a*(3) + (2,3)-\a*(2,3)$);
}
\foreach \a in {1,...,30}
 \draw[red!50] (1,.1*\a)--(2,.1*\a);
\fill[gray!40] (0,0) rectangle (1,3) (2,0) rectangle (3,3);
\draw[thick] (0,0) grid (3,3)  (.5,1)--(.5,2) (0)--(1) (2)--(3) (2.5,0)--(2.5,2);
\end{tikzpicture}
\caption{The perturbed mesh $\delta_{T_1T_2}(\G)$.}
\end{subfigure}
\caption{Example for a perturbation $\delta_{T_1T_2}$. In the shaded area, $\delta_{T_1T_2}$ equals the identity map. In the non-shaded region, we underlaid a red grid to illustrate the behavior of $\delta_{T_1T_2}$.}
\label{fig: perturbation example}
\end{figure}
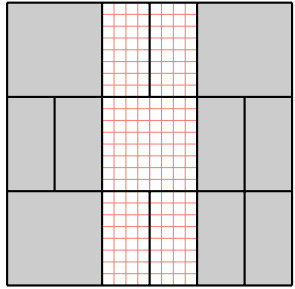
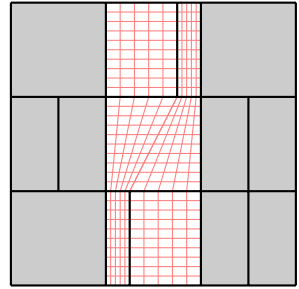
A straight-forward proof shows that perturbations can be concatenated in the sense that 
\[\delta_1\in\ptb(\G),\ \delta_2\in\ptb(\delta_1(\G))\enspace\Rightarrow\enspace
\delta_2\circ\delta_1\in\ptb(\G). \]
This allows for the subsequent conclusion of the proof.
Given the mesh $\G_0\sei\G$ choose an arbitrary pair $(T_0,T_0')$ of T-junctions in $\G$ such that their face extensions intersect, and set $\G_1\sei\delta_{T_0T_0'}(\G_0)$. Then choose $(T_1,T_1')$ such that $T_1$ and $T_1'$ are T-junctions with intersecting face extensions in $\G_1$, construct $\delta_{T_1T_1'}$ as above, accounting that $h$ and $\varepsilon$ may have changed. Set $\G_2\sei\delta_{T_1T_1'}(\G_1)$. Repeat this until in a mesh $\G_n$, there are no intersecting T-junction face extensions. Then $\delta^*\sei\delta_{T_{n-1}T_{n-1}'}\circ\dots\circ\delta_{T_0T_0'}$
is in $\ptb(\G)$ and satisfies that all T-junction face extensions in $\delta^*(\G)$ are pairwise disjoint.
\end{proof}

\begin{thm}[{\cite[Theorem~6.1]{ASTS-characterization}}]\label{thm: ASTS-characterization}%
Given two analysis-suitable meshes $\G_1$ and $\G_2$, if for all $\delta\in\ptb(\G_2)$ holds 
\[\bigl(\delta(\G_1)\bigr)^{\ext[]} \preceq \bigl(\delta(\G_2)\bigr)^{\ext[]},\]
then the T-spline spaces corresponding to $\G_1$ and $\G_2$ are nested.
\end{thm}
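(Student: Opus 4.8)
The plan is to reduce nestedness to a membership statement for individual blending functions and then to exploit univariate knot insertion. Recall that each T-spline blending function has the product form $B_T = B_{\xx(T)}\otimes B_{\yy(T)}$, where $B_\Xi$ denotes the univariate B-spline on a local knot vector $\Xi$ and, crucially, $\xx(T)$ and $\yy(T)$ are read off from the crossings recorded in $\sk(\G)\cup\ext(\G)=\sk(\G^{\ext[]})$; in particular they are determined by the extended mesh, not by $\G$ alone. Nestedness of the spaces is equivalent to: every $B_T$ with $T$ active in $\G_1$ lies in the span of $\{B_{T'}\mid T'\text{ active in }\G_2\}$. Since a univariate B-spline is a non-negative linear combination of B-splines on any refinement of its knot vector, the first step is to use Corollary~\ref{crl: refine skeleton} to translate the hypothesis $\sk((\delta\G_1)^{\ext[]})\subseteq\sk((\delta\G_2)^{\ext[]})$ into the statement that each $\G_1$-local knot vector refines to a knot vector contained in the horizontal, resp.\ vertical, trace of $\sk(\G_2^{\ext[]})$.

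The heart of the matter is that the local knot vector $\xx(T)$ is not the full trace of $\sk(\G^{\ext[]})$ on the horizontal line through $T$, but the window of $p+1$ consecutive crossings centred at $T$. One therefore has to show that, when this window is refined by the extra crossings coming from $\sk(\G_2^{\ext[]})$, the refined univariate B-spline decomposes exactly into the univariate $x$-factors of genuine $\G_2$-blending functions, i.e.\ into $B_{\xx_{\G_2}(T')}$ for active nodes $T'$ of $\G_2$ on that line, and that the analogous decomposition in the $y$-direction pairs up with it so that the resulting tensor products $B_{\xx_{\G_2}(T')}\otimes B_{\yy_{\G_2}(T'')}$ are exactly the $B_{T'''}$ of $\G_2$ rather than spurious products. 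This is where analysis-suitability is used, through the dual-compatibility characterisation of \cite{AS-Tsplines-are-DC,AS-Tsplines-of-arb-degree}: AS forces the windowed knot vectors of $\G_2$ to tile each refined line consistently and pairwise-compatibly, which is precisely what lets the separate $x$- and $y$-knot insertions recombine into admissible blending functions.

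The third ingredient is the quantifier over all $\delta\in\ptb(\G_2)$. The assignment $\G\mapsto\G^{\ext[]}$ is not robust: a face extension of $\G_1$ may land accidentally on an edge of $\G_2$, or two extensions may overlap, so that the skeleton containment holds for combinatorially wrong reasons or a windowed knot vector degenerates. Using Proposition~\ref{prp: worst perturbation} we pass to a perturbation $\delta^*$ of $\G_2$ (and the induced perturbation of $\G_1$) in which all face extensions are pairwise disjoint; the hypothesis, assumed for every $\delta$, still yields $(\delta^*\G_1)^{\ext[]}\preceq(\delta^*\G_2)^{\ext[]}$, and in this generic mesh the windowed extraction of the previous paragraph is stable and the index bookkeeping is unambiguous. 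Since a perturbation is a continuous invertible reparametrisation of $\barOmega$ fixing its corners, it does not change the spline spaces (blending functions transform covariantly), so nestedness for $\delta^*\G_1,\delta^*\G_2$ transfers back to $\G_1,\G_2$.

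I expect the main obstacle to be the second step: controlling the length-$(p+1)$ windowed extraction under refinement. One must rule out that a newly inserted crossing pushes a knot of $\xx_{\G_1}(T)$ outside the window used to define the $\G_2$-knot vectors — which would break the decomposition of $B_{\xx_{\G_1}(T)}$ into $\G_2$-factors — and the extended-mesh hypothesis together with AS is exactly what prevents this; making this airtight, including the padded index vectors near $\partial\barOmega$ and the boundary of the active region, is the delicate part of the argument.
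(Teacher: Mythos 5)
This theorem is not proven in the paper at all: it is quoted as Theorem~6.1 of \cite{ASTS-characterization} (Li and Scott) and used only as a black box in combination with Theorem~\ref{thm: nesting}. So there is no in-paper proof to compare against; what can be judged is whether your sketch would stand on its own, and it does not yet. The decisive step --- showing that after inserting the knots coming from $\sk\bigl(\G_2^{\ext[]}\bigr)$ the refined univariate factors of $B_{\xx(T)}\otimes B_{\yy(T)}$ recombine into genuine blending functions of $\G_2$ and not into spurious tensor products --- is essentially the entire content of the theorem, and your proposal states it as a goal (``one therefore has to show\dots'', ``making this airtight\dots is the delicate part'') rather than establishing it. Invoking dual-compatibility is the right instinct, but the argument requires proving that every length-$(p+1)$ window of the refined knot line is the local index vector of an actual active node of $\G_2$ (including the padded vectors near $\partial\barOmega$) and that the $x$- and $y$-windows pair up into anchors of $\G_2$; none of this is carried out, so the proposal is an outline, not a proof.

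There is also a concrete error in your final transfer step. You assert that a perturbation does not change the spline spaces because ``blending functions transform covariantly,'' and you use this to pull nestedness of $\delta^*(\G_1),\delta^*(\G_2)$ back to $\G_1,\G_2$. This is contradicted by the paper's own remark after the definition of mesh perturbation: in general $\ext_{\delta(\G)}(\delta(T))\neq\delta(\ext_{\G}(T))$, so the combinatorial data from which local knot vectors and extended meshes are built is \emph{not} preserved under $\delta$, and B-splines are certainly not invariant under an arbitrary continuous reparametrisation of $\barOmega$. In the intended argument the universal quantifier over $\delta\in\ptb(\G_2)$ serves to exclude accidental coincidences and overlaps of extensions, so that the skeleton containment holds for robust combinatorial reasons, while the nestedness conclusion is drawn for the original, unperturbed meshes; your plan of proving nestedness for the perturbed pair and transporting it back would require a separate invariance statement that is false in this generality.
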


The main result of this section is the following.

\begin{thm}\label{thm: nesting}%
Any two meshes $\G_1,\G_2\in\A$ that are nested in the sense $\G_1\preceq\G_2$ satisfy for all $\delta\in\ptb(\G_2)$
\[\bigl(\delta(\G_1)\bigr)^{\ext[]} \preceq \bigl(\delta(\G_2)\bigr)^{\ext[]}.\]
\end{thm}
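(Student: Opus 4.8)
The goal is to show $\bigl(\delta(\G_1)\bigr)^{\ext[]} \preceq \bigl(\delta(\G_2)\bigr)^{\ext[]}$ for nested admissible meshes $\G_1 \preceq \G_2$ and arbitrary $\delta \in \ptb(\G_2)$. By Corollary~\ref{crl: refine skeleton}, this amounts to proving the skeleton inclusion
\[
\sk\bigl((\delta(\G_1))^{\ext[]}\bigr) \;=\; \sk(\delta(\G_1)) \cup \ext(\delta(\G_1)) \;\subseteq\; \sk(\delta(\G_2)) \cup \ext(\delta(\G_2)) \;=\; \sk\bigl((\delta(\G_2))^{\ext[]}\bigr).
\]
The plan is to split the left-hand side into its two pieces. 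The inclusion $\sk(\delta(\G_1)) \subseteq \sk(\delta(\G_2))$ is immediate: from $\G_1 \preceq \G_2$ and Corollary~\ref{crl: refine skeleton} we have $\sk(\G_1)\subseteq\sk(\G_2)$, and applying the homeomorphism $\delta$ preserves this inclusion since $\sk(\delta(\G_i)) = \delta(\sk(\G_i))$ (as noted in the Remark following the definition of mesh perturbation). So the real work is to show
\[
\ext(\delta(\G_1)) \;\subseteq\; \sk(\delta(\G_2)) \cup \ext(\delta(\G_2)).
\]

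I would prove this one T-junction extension at a time. Let $T$ be a T-junction of $\delta(\G_1)$; it corresponds to a T-junction $T_0$ of $\G_1$ (a valence-three node), say of type $\dashv$ with a horizontal edge-extension and face-extension. The extension $\ext_{\delta(\G_1)}(T)$ is a horizontal segment through $T$ of combinatorial ``radius'' $\lceil p/2\rceil$ nodes on one side and $\lfloor p/2\rfloor$ on the other, measured along the row of vertical-skeleton crossings at height $T_2$. Now I would distinguish two cases according to whether $T$ is still a T-junction in $\delta(\G_2)$, i.e.\ whether the node $T_0$ has valence exactly three in $\G_2$. If $T_0$ has valence three in $\G_2$ as well, then $T$ is a T-junction of $\delta(\G_2)$ of the same type. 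Here one must check that $\ext_{\delta(\G_2)}(T) \supseteq \ext_{\delta(\G_1)}(T)$: the extension of $T$ in the finer mesh $\G_2$ spans $p+1$ consecutive entries of $\XX_{\G_2}(T_2) \supseteq \XX_{\G_1}(T_2)$ centered at $T_1$, and since a superset of crossing nodes can only make the $i_{-\lfloor p/2\rfloor},\dots,i_{\lceil p/2\rceil}$ window shorter in parameter length, the finer extension is contained in the coarser one — wait, this is the wrong direction; rather, the extension in $\G_2$ is the \emph{shorter} segment, so instead I would argue that the part of $\ext_{\delta(\G_1)}(T)$ not covered by $\ext_{\delta(\G_2)}(T)$ must lie on $\sk(\delta(\G_2))$. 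Concretely, $\ext_{\delta(\G_1)}(T)$ runs from the $\lfloor p/2\rfloor$-th crossing of $\vsk(\G_1)$ to the left of $T_1$ to the $\lceil p/2\rceil$-th crossing to the right; every node of $\vsk(\G_1)$ on this segment lies on $\vsk(\G_2)$, and these together with the segments between them (which lie in $\hsk(\G_2)$ precisely because they are short enough to be free of vertical crossings of $\G_2$ — here one uses $\sk(\G_1)\subseteq\sk(\G_2)$ and that $\ext_{\delta(\G_1)}(T)$ already avoids $\vsk(\G_1)$ between crossings) give coverage. If $T_0$ has valence four in $\G_2$ (the refinement has filled in the missing edge), then $T$ is a crossing vertex of $\delta(\G_2)$, hence $T \in \sk(\delta(\G_2))$; moreover the entire extension segment $\ext_{\delta(\G_1)}(T)$ — being a horizontal segment whose endpoints are nodes of $\vsk(\G_1)\subseteq\vsk(\G_2)$ and which meets no vertical crossing of $\G_1$ strictly between — lies in $\hsk(\G_2)$, because in $\G_2$ the row at height $T_2$ is at least as refined. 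I would make this last claim precise by observing that $\ext_{\delta(\G_1)}(T) \subseteq \delta(\hsk(\G_1) \cup \{\text{edge through }T_0\})$, wait — more carefully: the edge-extension part lies along existing edges, and the face-extension part is a segment that in $\G_1$ might cross element interiors, but its length is controlled by the crossing nodes of $\vsk(\G_1)$, all of which persist in $\vsk(\G_2)$. The cleanest formulation: I would show $\ext_{\delta(\G_1)}(T) \subseteq \sk(\delta(\G_2))\cup\ext_{\delta(\G_2)}(T')$ where $T'$ is either $T$ itself (if still a T-junction) or absent (if $T$ became a crossing), by a direct comparison of the index windows in $\XX_{\G_1}(T_2)\subseteq\XX_{\G_2}(T_2)$.

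The main obstacle I anticipate is the bookkeeping around the perturbation $\delta$: as the Remark warns, $\delta$ does \emph{not} commute with taking extensions, so I cannot simply write $\ext_{\delta(\G_i)}(\delta(T_0)) = \delta(\ext_{\G_i}(T_0))$ and reduce everything to the unperturbed meshes. However, $\delta$ \emph{is} a homeomorphism that maps the vertical (resp.\ horizontal) skeleton of $\G_i$ onto that of $\delta(\G_i)$ and preserves the \emph{combinatorial} structure — the sequence of crossing nodes along any skeleton line, hence the index sets $\XX$ and $\YY$ up to relabeling their entries. Since the definition of $\ext$ depends only on this combinatorial data (which $p+1$ consecutive entries of $\XX(t_2)$ one picks), the inclusion of index windows is a combinatorial statement about $\G_1$ versus $\G_2$ that is insensitive to $\delta$, and the geometric segment $\ext_{\delta(\G_i)}(T)$ is just the $\delta$-image of the corresponding combinatorial segment. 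So I would phrase the whole argument combinatorially on $\G_1\preceq\G_2$ first — proving $\ext(\G_1)\subseteq\sk(\G_2)\cup\ext(\G_2)$, i.e.\ $\G_1^{\ext[]}\preceq\G_2^{\ext[]}$ for the \emph{unperturbed} meshes using analysis-suitability of both (Theorem~\ref{thm: AS}) only insofar as it is needed to know the extensions are axis-parallel segments, which in the perturbed setting is automatic — and then observe that applying $\delta$ to both sides of this skeleton inclusion, using $\sk(\delta(\G_i)) = \delta(\sk(\G_i))$ and $\ext(\delta(\G_i)) = \delta\bigl(\text{combinatorial extensions of }\G_i\bigr)$, yields the perturbed statement. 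The one subtlety to nail down is that $\ext(\delta(\G_i))$ really equals the $\delta$-image of the combinatorial-extension set of $\G_i$; this follows because $\delta$ is affine on each mesh line between consecutive crossings (or at least monotone, order-preserving along each line), so it sends the $i_k$-indexed crossing of $\G_i$ on a given line to the $i_k$-indexed crossing of $\delta(\G_i)$, and sends the interval between index $i_{-\lfloor p/2\rfloor}$ and $i_{\lceil p/2\rceil}$ to the interval between the corresponding images.
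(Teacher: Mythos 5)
Your reduction via Corollary~\ref{crl: refine skeleton}, the easy inclusion $\sk(\delta(\G_1))\subseteq\sk(\delta(\G_2))$, and the case split according to whether a T-junction of $\G_1$ survives in $\G_2$ all match the paper. But the core of your argument has two genuine gaps. First, for a surviving horizontal T-junction whose extension window shrinks in $\G_2$, you claim the uncovered tail of $\ext_{\G_1}(T)$ lies in $\hsk(\G_2)$ because the segments between consecutive crossings of the vertical skeleton are ``free of vertical crossings.'' That inference is false: a face extension passes through the \emph{interiors} of elements, and a segment in the interior of an element of $\G_2$ is free of vertical crossings yet lies on no edge and, in general, on no extension of $\G_2$. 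Indeed, for arbitrary nested rectangular partitions the statement $\ext(\G_1)\subseteq\sk(\G_2)\cup\ext(\G_2)$ is simply false (this failure is precisely why local AS refinement is delicate), so any proof must use admissibility in an essential way --- not merely, as you suggest, to know that extensions are axis-parallel, which holds by definition. The paper's route is to induct over single admissible bisections $\G_2=\bisect(\G_1,K)$ and to show, using the geometric estimates from the proof of Theorem~\ref{thm: AS}, that the new edge $E_K$ lies in $K\ho$ while every existing horizontal extension avoids $K\ho$; hence surviving extensions are not shortened at all, and there is no tail to cover.

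Second, your reduction of the perturbed statement to unperturbed combinatorics effectively assumes $\ext(\delta(\G_i))=\delta(\ext(\G_i))$, which the paper's Remark explicitly warns is false. A general $\delta\in\ptb(\G_2)$ need not preserve $y$-coordinates along the horizontal line through a T-junction, so the set of vertical edges crossed at the height of $T^\delta$ --- hence $\XX_{\delta(\G_1)}(T^\delta_2)$ and the resulting extension --- can differ combinatorially from the unperturbed picture; this is exactly the mechanism exploited in Proposition~\ref{prp: worst perturbation} to separate overlapping face extensions. Your appeal to $\delta$ being ``affine or order-preserving on each mesh line'' is not part of the definition of $\ptb(\G)$. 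The paper circumvents this by enclosing the extension in the environment $\U[]T$, a union of mesh elements, which does transform predictably under $\delta$, and by running the contradiction argument on that environment rather than on the segment itself.
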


\begin{proof}
According to Corollary~\ref{crl: refine skeleton}, we have to show that
\[\ext\bigl(\delta(\G_1)\bigr)\cup\sk\bigl(\delta(\G_1)\bigr)\ \subseteq\ \ext\bigl(\delta(\G_2)\bigr)\cup\sk\bigl(\delta(\G_2)\bigr).\]
We prove this for $\G_2$ being an admissible bisection of $\G_1$. The claim then follows inductively for all admissible  refinements of $\G_1$.
Let $K\in\G_1\in\A$ and $\G_2\sei\bisect(\G_1,K)\in\A$. Since ``$\preceq$'' denotes an elementwise subset relation, it is preserved under the mapping $\delta$. Thus, from $\G_1\preceq\G_2$ follows $\delta(\G_1)\preceq\delta(\G_2)$ and consequently  $\sk\bigl(\delta(\G_1)\bigr) \subseteq \sk\bigl(\delta(\G_2)\bigr)$.
It remains to prove that
\[\ext\bigl(\delta(\G_1)\bigr)\ \subseteq\ \ext\bigl(\delta(\G_2)\bigr)\cup\sk\bigl(\delta(\G_2)\bigr).\]
Denote by $\mathcal T_1$ and $\mathcal T_2$ the set of T-junctions in $\G_1$ and $\G_2$, respectively. 
Assume w.l.o.g.\ that $\ell(K)$ is even, and consider an arbitrary T-junction $T^\delta$ in the mesh $\delta(\G_1)$.
Since $\delta$ is continuous and invertible, there is a one-to-one correspondence between the T-junctions in $\G_1$ and $\delta(\G_1)$, i.e., there is $T\in\mathcal T_1$ with $\delta(T)=T^\delta$, and $T$ and $T^\delta$ are of the same type ($\vdash,\bot,\dashv,$ or $\top$).

\emph{Case 1.} $T\notin K$. Then $T$ is still a T-junction after bisecting $K$, i.e., $T\in\mathcal T_2$. Consequently, $T^\delta$ is also a T-junction in $\delta(\G_2)$.

\emph{Case 1a.} $T$ is a vertical T-junction. Since $\ell(K)$ is assumed to be even, its bisection does not affect the horizontal skeleton, i.e., $\hsk(\G_1)=\hsk(\G_2)$ and hence $\hsk(\delta(\G_1))=\hsk(\delta(\G_2))$. Consquently, the T-junction extensions of $T$ and $T^\delta$ are preserved, 
\[\ext_{\G_1}(T)=\ext_{\G_2}(T)\enspace\text{and}\enspace\ext_{\delta(\G_1)}(T^\delta)=\ext_{\delta(\G_2)}(T^\delta)\subseteq\ext\big(\delta(\G_2)\big).\]

\emph{Case 1b.} $T$ is a horizontal T-junction. We will show that the corresponding T-junction extension in the pertubed mesh is preserved, i.e., \[\ext_{\delta(\G_1)}(T^\delta)=\ext_{\delta(\G_2)}(T^\delta).\]
Assume for contradiction that $\ext_{\delta(\G_1)}(T^\delta)\neq\ext_{\delta(\G_2)}(T^\delta)$. The bisection of $K$ generates a vertical edge $E_K\supseteq\vsk(\G_2)\setminus\vsk(\G_1)$, and we denote 
\[E_K^\delta\sei\delta(E_K)\supseteq\vsk(\delta(\G_2))\setminus\vsk(\delta(\G_1)).\]
Obviously, $E_K^\delta$ intersects with $\ext_{\delta(\G_1)}(T^\delta)$, otherwise the T-junction extension would be the same in $\delta(\G_2)$. Given $K=[\mu,\mu+\tilde\mu]\times[\nu,\nu+\tilde\nu]$, we define the half-open domain $K\ho\sei\left]\mu,\mu+\tilde \mu\right[\times[\nu,\nu+\tilde\nu]$, which is the rectangle $K$ without its vertical edges. Then $E_K\subset K\ho$ and hence $E_K^\delta\subset K\ho^\delta\sei\delta(K\ho)$. Together, we have that $\ext_{\delta(\G_1)}(T^\delta)$ intersects with $K\ho^\delta$.
Since the bisection of $K$ is admissible, we know from the proof of Theorem~\ref{thm: AS} that $\ext_{\G_1}(T)$ does not intersect with $K\ho$ in the unperturbed mesh $\G_1$. Define the $T$-environment \[\U[]T\sei\mbigcup[\substack{K'\in\G_1\\K'\ho\cap\ext(T)\neq\emptyset}]K',\]
as the union of all $K'\in\G_1$ such that $\ext(T)$ intersects the corresponding half-open $K'\ho$.
Then $\U[]T$ is a rectangular domain that does not intersect with $K\ho$. Since for each $K'\subseteq\U[]T$, the image $\delta(K')$ is a rectangle and since $\delta$ is continuous, $\delta\big(\U[]T\big)$ is a rectangular domain that does not intersect with $K^\delta\ho$.
Moreover, since all edges and vertices in $\U[]T$ are continuously mapped into $\delta\big(\U[]T\big)$, we have 
$\U[]{T^\delta}\subseteq\delta\big(\U[]T\big)$.
Together, we get that $\U[]{T^\delta}$ does not intersect with $K^\delta\ho$, hence $\ext_{\delta(\G_1)}(T^\delta)$ does not intersect with $K^\delta\ho$, which is the desired contradiction.

\emph{Case 2.} $T\in K$. In Section~\ref{sec: refinement}, we assumed that $p,q\ge2$. This implies that all neighbors of $K$ are in $\pq{\G_1}K$ and that $K$ is in the patch of all those neighbors as well. Since $\G_1$ is admissible, the level of a neighbor of $K$ is either $\ell(K)$ or $\ell(K)+1$. Since $\ell(K)$ is even, $T$ must be a vertical T-junction, and $T^\delta$ is a vertical T-junction as well. Since $T$ is on the boundary of $K$, and the bisection of $K$ generates a vertical edge, $T$ is not a T-junction anymore in $\G_2$. Hence $T^\delta$ is a vertex, but not a T-junction in $\delta(\G_2)$. The T-junction extension $\ext(T^\delta)$ hence only exists in $\delta(\G_1)$. Consider the edge extension of $T^\delta$.

\emph{Case 2a.} $\ext_e(T^\delta)\subseteq\vsk(\delta(\G_2))$. There is no problem with that.

\emph{Case 2b.} $\ext_e(T^\delta)\nsubseteq\vsk(\delta(\G_2))$. Then there exists some $\tilde T^\delta\in\ext_e(T^\delta)$ which is a T-junction in $\delta(\G_2)$, such that
\[\ext_e(T^\delta)\subset\ext_{\delta(\G_2)}(\tilde T^\delta)\subseteq\ext(\delta(\G_2)).\]

The Cases 2a and 2b hold analogously for the face extension $\ext_f(T^\delta)$. Together, we have
\[\ext(T^\delta)\subseteq\ext(\delta(\G_2))\cup\vsk(\delta(\G_2)),\]
which concludes the proof.
\end{proof}

The combination of Theorem \ref{thm: ASTS-characterization} and \ref{thm: nesting} reads as follows.
\begin{crl}
For any two meshes $\G_1,\G_2\in\A$ that are nested in the sense $\G_1\preceq\G_2$, the corresponding T-spline spaces are also nested.
\end{crl}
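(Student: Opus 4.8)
The plan is to obtain the statement by chaining the three main results already established in the paper, exactly along the lines announced in the sentence preceding the corollary. Since $\G_1,\G_2\in\A$, Theorem~\ref{thm: AS} shows that both meshes are analysis-suitable, so the pair $(\G_1,\G_2)$ is an admissible input for Theorem~\ref{thm: ASTS-characterization}.

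Next, I would feed the hypothesis $\G_1\preceq\G_2$, together with $\G_1,\G_2\in\A$, into Theorem~\ref{thm: nesting}, which yields
\[\bigl(\delta(\G_1)\bigr)^{\ext[]} \preceq \bigl(\delta(\G_2)\bigr)^{\ext[]}\qquad\text{for all }\delta\in\ptb(\G_2).\]
This is precisely the remaining hypothesis of Theorem~\ref{thm: ASTS-characterization}, whose conclusion is that the T-spline spaces associated with $\G_1$ and $\G_2$ are nested; that is the claim.

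As the argument is a literal composition of Theorems~\ref{thm: AS}, \ref{thm: nesting} and~\ref{thm: ASTS-characterization}, I do not expect a genuine obstacle. The only point worth verifying is that the hypotheses line up verbatim: that ``analysis-suitable'' denotes the same property in Theorems~\ref{thm: AS} and~\ref{thm: ASTS-characterization}, and that the perturbation class $\ptb(\G_2)$ appearing in Theorem~\ref{thm: nesting} is exactly the one quantified over in Theorem~\ref{thm: ASTS-characterization}. Both hold by construction, so the corollary follows at once.
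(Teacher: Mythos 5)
Your argument is exactly the paper's: the corollary is stated as ``the combination of Theorem~\ref{thm: ASTS-characterization} and \ref{thm: nesting}'', with Theorem~\ref{thm: AS} supplying the analysis-suitability hypothesis, and your chaining of the three results reproduces that reasoning verbatim. The proposal is correct.
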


\section{Linear Complexity}\label{sec: complexity}
This section is devoted to a complexity estimate in the style of a famous estimate for the  Newest Vertex Bisection on triangular meshes given by Binev, Dahmen and DeVore \cite{BDV} and, in an alternative version, by Stevenson~\cite{Stevenson}. The estimate reads as follows.

\begin{thm}\label{thm: complexity}
Any sequence of admissible meshes $\G_0,\G_1,\dots,\G_J$ with \[\G_j=\refine(\G_{j-1},\M_{j-1}),\quad\M_{j-1}\subseteq\G_{j-1}\quad\text{for}\enspace j\in\{1,\dots,J\}\] satisfies
\[\left|\G_J\setminus\G_0\right|\ \le\ C_{p,q}\sum_{j=0}^{J-1}|\M_j|\ ,\]
with $C_{p,q}=(3+\sqr2)(4d_p+1)(4d_q+\sqr2)$ and $d_p,d_q$ from Lemma~\ref{lma: K1 in refMS => K2 in S} below.
\end{thm}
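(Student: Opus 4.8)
\textbf{Proof plan for Theorem~\ref{thm: complexity}.}

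The plan is to adapt the Binev--Dahmen--DeVore argument to this setting by charging each element generated during the refinement process to a marked element via a well-chosen potential function. First I would set up a charging scheme: to each element $K$ ever generated in the sequence, assign a ``responsible'' marked element $m(K)\in\bigcup_j\M_j$ and a weight, in such a way that the total weight charged to any single marked element $K^*\in\M_j$ is bounded by the constant $C_{p,q}$. The natural candidate for the responsible element is, roughly, the marked element whose closure caused $K$ to be bisected (tracing back through Algorithm~\ref{alg: closure}): when $K'$ is added to $\Mtilde$ because $K'\in\pq\G K$ with $\ell(K')<\ell(K)$, then the descendants of $K'$ produced in this step inherit responsibility from (the ultimate marked ancestor of) $K$. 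I would then want a geometric/combinatorial lemma, presumably the ``Lemma~\ref{lma: K1 in refMS => K2 in S}'' referenced in the statement, saying that if $K_2$ lies in the $(p,q)$-patch of some $K_1$ across many levels, the spatial location of $K_2$ is confined to a box of bounded size (measured in units of $\size(\ell)$) around $K_1$; the constants $d_p,d_q$ control the number of half-open unit cells in each direction. The total $|\G_J\setminus\G_0|$ is then at most a sum over marked elements of the number of elements charged to each, and the patch-size bound together with the geometric decay of element sizes across levels makes this a convergent geometric series in each coordinate direction, yielding the factor $(3+\sqr2)(4d_p+1)(4d_q+\sqr2)$.

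Concretely, the key steps in order: (1) reduce to counting, over all refinement steps $j$ and all $K^*\in\M_j$, the number of elements that the closure of $\{K^*\}$ forces to be bisected, plus $K^*$ itself --- using Lemma~\ref{lma: bounded overlay}-style subadditivity or a direct induction to pass from a single marking to the whole sequence; (2) analyze one closure computation: starting from a marked $K$, the set $\clos_\G(\{K\})$ is built by repeatedly pulling in coarser elements in patches, and by local quasi-uniformity (Lemma~\ref{lma: levels change slowly}) each such element is at most one level coarser than the element that recruited it, so the recruitment tree has depth $O(\ell(K))$ and at each level the recruited elements lie in a bounded patch; (3) bound, for a fixed marked $K^*$ of level $L$, the number of elements at level $L-1, L-2, \dots$ that get bisected because of $K^*$ over the entire history --- this is where the geometric lemma enters, confining level-$k$ contributors to an axis-aligned region of side $\lesssim d_p\cdot 2^{-k/2}$ by $\lesssim d_q\cdot 2^{-k/2}$, hence $O(d_p d_q)$ elements per level; (4) sum the geometric series $\sum_k$ of these contributions (the $\sqr2$ factors and the ``$+1$'', ``$+\sqr2$'' come from summing $\sum 2^{-k/2}$ separately over even and odd $k$, which behave differently because $\D(k)$ and $\size$ alternate), and collect constants into $C_{p,q}$.

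I expect the main obstacle to be step~(3): making the charging scheme genuinely injective-with-bounded-multiplicity \emph{across all $J$ refinement steps simultaneously}, not just within a single call to $\refine$. The subtlety is that an element $K^*$ marked at step $j$ may already sit in a mesh that was partly refined because of earlier markings, and conversely an element bisected at step $j$ may ``morally'' be caused by a marking from a much later step (through the cumulative effect of closures); the BDV-type argument handles this by charging each \emph{generated} element once, to the marking in whose closure it \emph{first} appears, and then arguing that a fixed marked element $K^*$ can only ever be the first cause of elements located within its bounded $(p,q)$-patch region across \emph{all} scales finer than $\ell(K^*)$ --- elements coarser than $K^*$ in $K^*$'s patch are already present (admissibility), so they are never newly generated on $K^*$'s account. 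Verifying that this ``first appearance'' assignment is well-defined and that the per-marked-element count is exactly the claimed product of geometric sums --- in particular getting the constants $d_p,d_q$ and the prefactor $(3+\sqr2)$ sharp rather than merely $O(d_p d_q)$ --- is the delicate bookkeeping the proof must carry out, and is presumably the content of Lemma~\ref{lma: K1 in refMS => K2 in S} and the counting that follows it.
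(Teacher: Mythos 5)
There is a genuine gap, and it sits exactly where you predicted the difficulty would be: your charging scheme assigns each generated element to a \emph{single} responsible marked element (``the marking in whose closure it first appears''), and that scheme provably cannot work. The paper's own Example~\ref{ex: local complexity} exhibits meshes $\G_k\in\A$ and single elements $K_k$ with $\#(\refine(\G_k,\{K_k\})\setminus\G_k)\ge k$; under a first-appearance assignment all of these elements are charged to $K_k$, so no uniform per-mark bound $C_{p,q}$ exists. Your claim that ``elements coarser than $K^*$ in $K^*$'s patch are already present (admissibility), so they are never newly generated on $K^*$'s account'' is also backwards: Algorithm~\ref{alg: closure} recruits precisely the \emph{coarser} elements in the patches, recursively, so the closure cascade produces children at every level $1,\dots,\ell(K^*)+1$, spreading over a region whose size is measured in units of the \emph{coarsest} recruited level, not in units of $\ell(K^*)$. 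Consequently your step (3)--(4) accounting is off: the number of recruited elements per level is $\approx(4d_p+1)(4d_q+\sqr2)$, a \emph{constant} per level (region of area $\sim d_pd_q2^{-k}$ divided by element area $2^{-k}$), so summing over the $O(\ell(K^*))$ levels of the recruitment tree gives $O(d_pd_q\,\ell(K^*))$ --- there is no geometric series in the counts, and the resulting bound would carry an extra factor of the maximal level, which is not the theorem.

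The missing idea is the two-sided \emph{fractional} charging of Binev--Dahmen--DeVore/Stevenson, which the paper implements via
\[\lambda(K,\KM)\sei 2^{(\ell(K)-\ell(\KM))/2}\quad\text{if }\ell(K)\le\ell(\KM)+1\text{ and }\Dist(K,\KM)\le2^{1-\ell(K)/2}(d_p,d_q),\]
and $\lambda(K,\KM)\sei0$ otherwise. Each generated $K$ distributes charge to \emph{many} marked elements $\KM$, and the two complementary estimates are: (i) $\sum_{K}\lambda(K,\KM)\le C_{p,q}$ for every marked $\KM$ --- here the convergent geometric factor $3+\sqr2$ comes from the \emph{weights} $2^{(j-\ell(\KM))/2}$ over levels $j\le\ell(\KM)+1$, multiplying the constant per-level count $(4d_p+1)(4d_q+\sqr2)$; and (ii) $\sum_{\KM}\lambda(K,\KM)\ge1$ for every $K\in\G_J\setminus\G_0$. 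Estimate (ii) is the part your outline has no counterpart for: one iterates Lemma~\ref{lma: K1 in refMS => K2 in S} \emph{across refinement steps} to build a chain $K_1,K_2,\dots$ of marked elements from successively earlier steps, each within the distance window of its predecessor, and shows that by the time the chain terminates (level $0$ reached, level dropping below $\ell(K)-1$, or accumulated distance exceeding $2^{1-\ell(K)/2}(d_p,d_q)$) the collected weights already sum to at least $1$ --- in the last case via a triangle inequality on the distances. This is precisely the mechanism that lets the $k$ elements generated in Example~\ref{ex: local complexity} be paid for by the $k$ \emph{earlier} markings rather than by the single current one. Your Lemma-content guess (geometric confinement with constants $d_p,d_q$) and the per-level counting are correct, but without the weight $\lambda$ and the lower bound (ii) the argument does not close.
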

\begin{rem}
Theorem~\ref{thm: complexity} shows that, with regard to possible mesh gradings, the refinement algorithm is as flexible as successive bisection without the  closure step. 
However, this result is non-trivial. Given a mesh $\G\in\A$ and an element $K\in\G$ to be bisected, there is no uniform bound on the number of generated elements $\#(\refine(\G,\{K\})\setminus\G)$. This is illustrated by the following example.
\end{rem}
\begin{ex}\label{ex: local complexity}
Consider the case $p=q=2$ and the initial mesh $\G_0$ given through $M=3$ and $N=4$. Mark the element in the lower left corner of the mesh and compute the corresponding refinement $\G_1$; repeat this step $k$ times. Then there exists an element $K_k$ in $\G_k$ such that $\#(\refine[1,1](\G_k,K_k)\setminus\G_k)\ge k$. This is illustrated in Figure~\ref{fig: bad complexity example}.
\begin{figure}[ht]
\centering
\includegraphics[width=.3\textwidth]{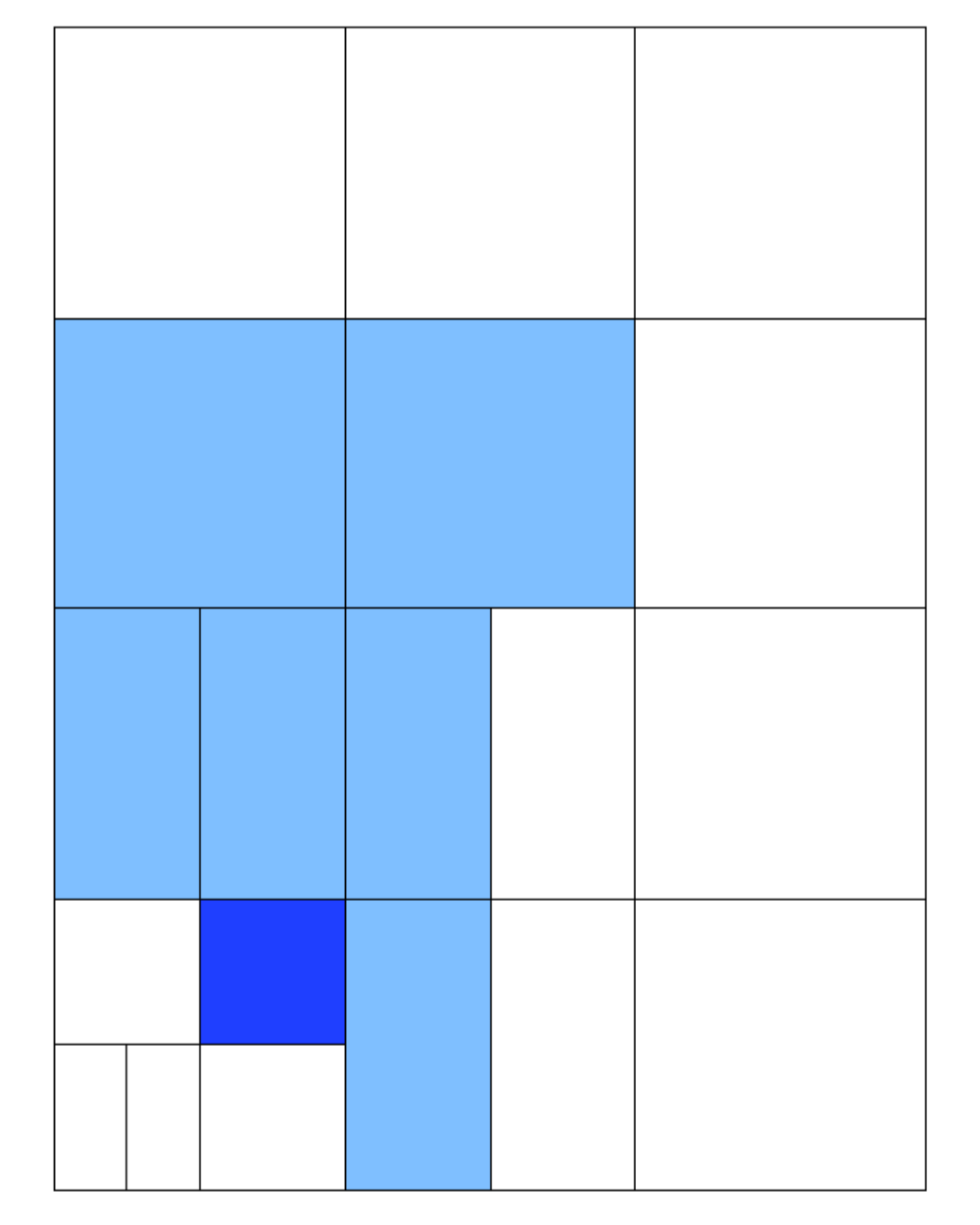}
\hspace{.1\textwidth}
\includegraphics[width=.3\textwidth]{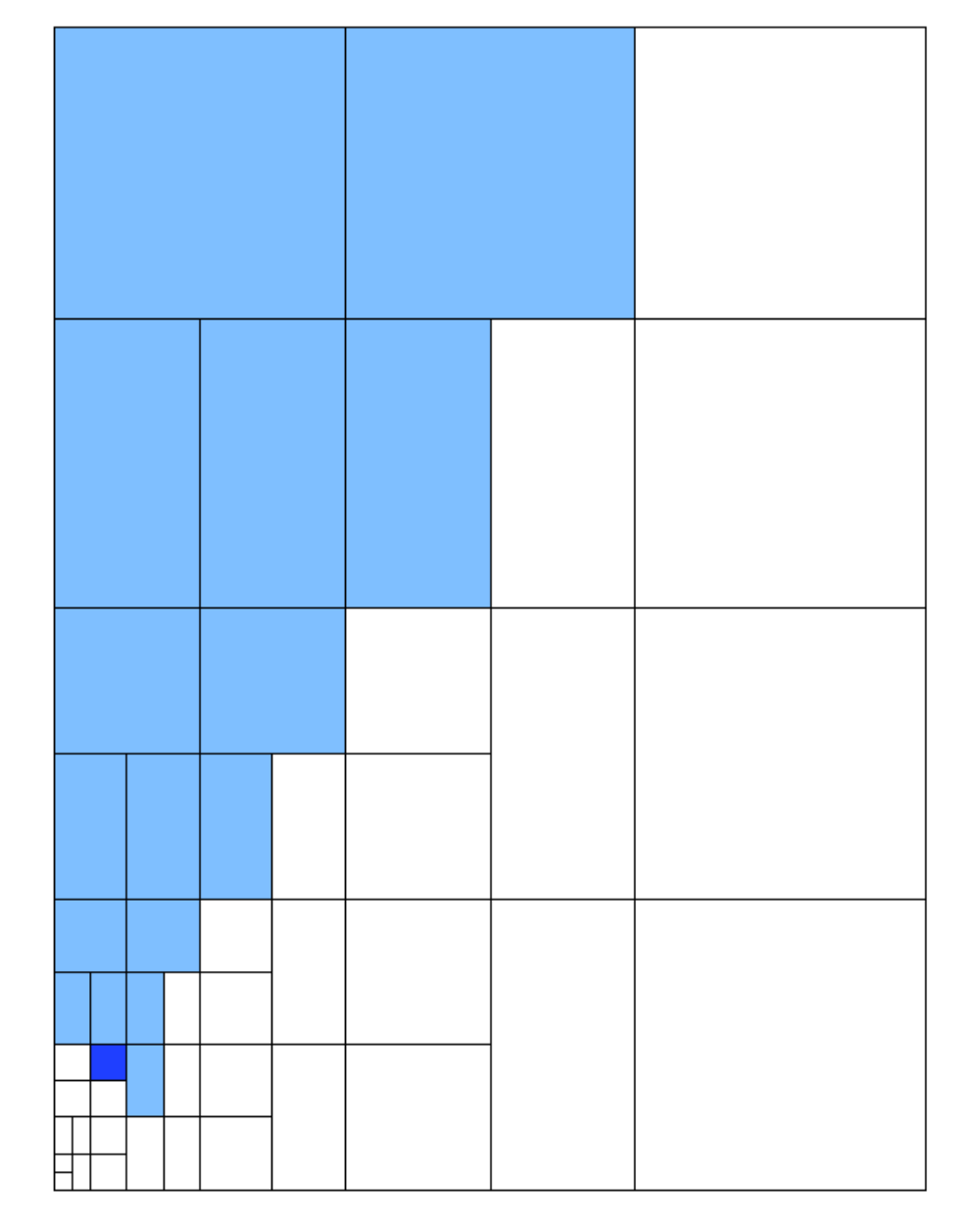}
\caption{The mesh $\G_3$ and the mesh $\G_8$ from Example~\ref{ex: local complexity}. The rectangles $K_3$ and $K_8$ are marked blue. The closures $\clos[1,1](\G_3,\{K_3\})$ and $\clos[1,1](\G_8,\{K_8\})$ are marked in light blue. Since the closure of $K_3$ consists of 7 elements, 14 elements will be generated if $K_3$ is bisected. Analogously, marking $K_8$ would cause the generation of 34 new elements.}
\label{fig: bad complexity example}
\end{figure}
\end{ex}

\begin{ex}
The large constant $C_{p,q}$ is not observed in practise. For $p=q=3$, we constructed for each $J\in\{1,\dots,2000\}$ a sequence $\G_0,\G_1,\dots,\G_J$ with $G_{j+1}=\bisect(G_j,K_j)$ and $K_j\in\G_j$ of uniform random choice. The ratio $\left|\G_J\right|/J$ was below $6$ (see Figure~\ref{fig: nice complexity example}), instead of the theoretical upper bound $C_{3,3}\approx12\,996$ from Theorem~\ref{thm: complexity}. We applied this procedure for $p,q=2,\dots,9$. The results are listed in Figure~\ref{fig: maxfac}.
In Figure~\ref{fig: medfac}, we listed similar results for $J\in\{1,\dots,100\}$, always marking the element in the lower left corner. In that case, the observed ratios are higher, but still orders of magnitude below the corresponding theoretical bounds.
\begin{figure}[ht]
\centering
\includegraphics[width=\textwidth]{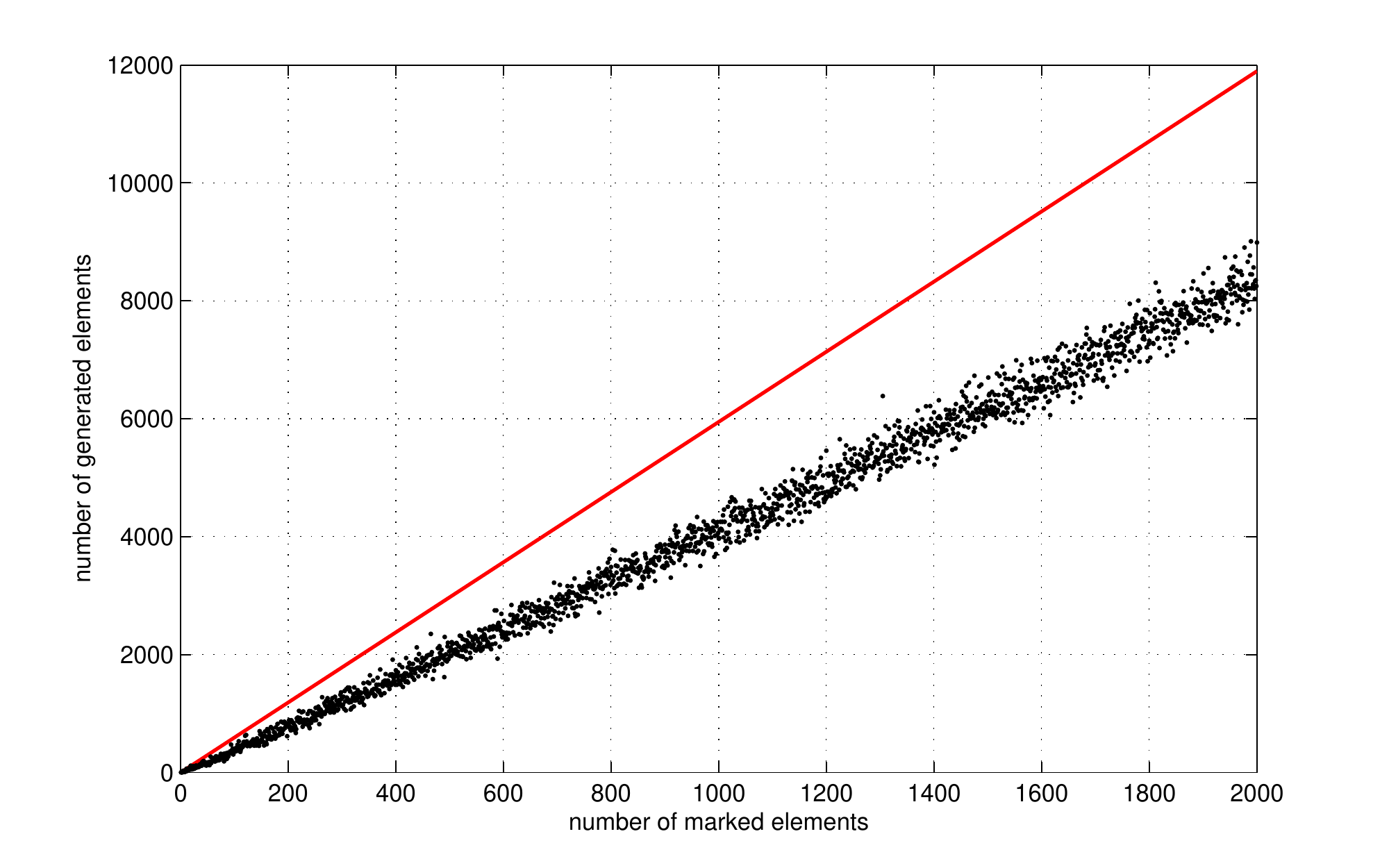}
\caption{Generated and marked elements for randomly refined $(3,3)$-admissible meshes. Each black dot corresponds to a sequence of random admissible refinements. The red line depicts the highest observed ratio ($\approx 5.95$). The median of the observed ratios is $\approx4.09$.}
\label{fig: nice complexity example}
\end{figure}
\end{ex}
\begin{figure}[ht]
\centering
\begin{tabular}{c|*{8}c}
\backslashbox{$p$}{$q$} & 2 & 3 & 4 & 5 & 6 & 7 & 8 & 9 \\\hline
2 &    5 &    5 &    7 &    7 &    7 &    7 &    8 &    8 \\ 
3 &    6 &    6 &    7 &    7 &    8 &    8 &    9 &   11 \\ 
4 &    7 &    8 &    8 &    8 &   11 &   10 &   10 &   12 \\ 
5 &    7 &    7 &    9 &   10 &   10 &   12 &   11 &   13 \\ 
6 &    7 &    8 &   10 &   10 &   11 &   12 &   12 &   16 \\ 
7 &    8 &   11 &   10 &   13 &   12 &   12 &   16 &   14 \\ 
8 &    9 &   10 &   11 &   17 &   13 &   13 &   15 &   15 \\ 
9 &    9 &   11 &   12 &   14 &   14 &   16 &   16 &   23 \\ 
\end{tabular}
\caption{Maximal observed ratios of generated and marked elements for random refinement.}
\label{fig: maxfac}
\end{figure}
\begin{figure}[ht]
\centering
\begin{tabular}{c|*{8}c}
\backslashbox{$p$}{$q$} & 2 & 3 & 4 & 5 & 6 & 7 & 8 & 9\\\hline
2&       24&       33&       46&       56&       69&       78&       91&      100 \\ 
3&       33&       46&       65&       78&       97&      109&      128&      140 \\ 
4&       46&       65&       91&      110&      136&      154&      179&      198 \\ 
5&       56&       78&      110&      132&      163&      186&      216&      238 \\ 
6&       69&       97&      136&      164&      202&      229&      268&      295 \\ 
7&       78&      110&      154&      186&      229&      260&      304&      335 \\ 
8&       91&      128&      180&      217&      268&      304&      355&      391 \\ 
9&      100&      141&      198&      239&      295&      335&      391&      431 \\ 
\end{tabular}
\caption{Maximal observed ratios of generated and marked elements when refining the lower left corner.}
\label{fig: medfac}
\end{figure}
We devote the rest of this section to proving Theorem~\ref{thm: complexity}.
\begin{lma}\label{lma: K1 in refMS => K2 in S}
Given $\M\subseteq\G\in\A$ and $K\in\refine(\G,\M)\setminus\G$, there exists $K'\in \M$ such that $\ell(K)\le\ell(K')+1$ and \[\Dist(K,K')\le2^{-\ell(K)/2}(d_p,\,d_q),\]
with ``$\le$'' understood componentwise and constants 
\[d_p\sei\tfrac12+(1+\sqr2)(p+\sqr2),\quad d_q\sei\tfrac1{\sqr2}+(2+\sqr2)(q+\sqr2).\]
\end{lma}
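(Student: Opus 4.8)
The plan is to trace the newly generated element $K$ back, through the closure Algorithm~\ref{alg: closure}, to a marked element along a chain of $(p,q)$-patches, and then to sum the corresponding vector-valued distances as a geometric series.

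First I would note that $\refine(\G,\M)=\bisect(\G,\clos_\G(\M))$ with $\clos_\G(\M)\subseteq\G$, so every element of $\refine(\G,\M)\setminus\G$ is a child of some $\hat K_0\in\clos_\G(\M)$; hence $\ell(\hat K_0)=\ell(K)-1$, and the bisection geometry recorded in the proof of Lemma~\ref{lma: levels change slowly} gives $\Dist(K,\hat K_0)=d(K)$ with the componentwise bound $d(K)\le 2^{-\ell(K)/2}(\tfrac12,\tfrac12)$. Then I would construct a chain $\hat K_0,\hat K_1,\dots,\hat K_n$ inductively: as long as $\hat K_i\notin\M$, Algorithm~\ref{alg: closure} must have inserted $\hat K_i$ into $\Mtilde$ because $\hat K_i\in\pq\G{\hat K_{i+1}}$ for some $\hat K_{i+1}$ already present in $\Mtilde$ with $\ell(\hat K_i)<\ell(\hat K_{i+1})$. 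Because $\clos_\G(\M)$ is finite and the levels along the chain form a strictly increasing integer sequence, this terminates, necessarily at $K'\sei\hat K_n\in\M$. Strict monotonicity also gives $\ell(\hat K_j)\ge\ell(\hat K_0)+j$ for all $j$, which yields at once $\ell(K)=\ell(\hat K_0)+1\le\ell(\hat K_n)+1=\ell(K')+1$ (the first assertion) and $\ell(\hat K_{i+1})\ge\ell(K)+i$ for $i=0,\dots,n-1$.

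For the distance bound I would use that $\Dist$ obeys the triangle inequality in each coordinate (it is the componentwise absolute difference of midpoints) and that $\hat K_i\in\pq\G{\hat K_{i+1}}$ means $\Dist(\hat K_i,\hat K_{i+1})\le\D(\ell(\hat K_{i+1}))$ by Definition~\ref{df: magic patch}, so that
\[\Dist(K,K')\ \le\ d(K)+\sum_{i=0}^{n-1}\D\bigl(\ell(\hat K_{i+1})\bigr).\]
Checking the even- and odd-level branches of the definition of $\D$ (with their floors/ceilings of $p/2,q/2$ and the extra factors of $2$) I expect the uniform componentwise estimate $\D(k)\le 2^{-k/2}\bigl(\tfrac{p+\sqrt2}{\sqrt2},\ q+\sqrt2\bigr)$ for $p,q\ge2$. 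Substituting $\ell(\hat K_{i+1})\ge\ell(K)+i$ and summing the geometric series $\sum_{i\ge0}2^{-i/2}=2+\sqrt2$, while noting $\tfrac{2+\sqrt2}{\sqrt2}=1+\sqrt2$, turns the sum into $2^{-\ell(K)/2}\bigl((1+\sqrt2)(p+\sqrt2),\ (2+\sqrt2)(q+\sqrt2)\bigr)$; adding the $d(K)$ contribution and using $\tfrac12\le\tfrac1{\sqrt2}$ in the second coordinate gives precisely $\Dist(K,K')\le 2^{-\ell(K)/2}(d_p,d_q)$.

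The conceptual content is short, and I expect the main obstacle to be the constant bookkeeping in the last step: verifying the uniform bound on $\D(k)$ across the even/odd cases of Definition~\ref{df: magic patch} tightly enough that, after multiplying by the geometric-series factor $2+\sqrt2$ and adding the parent offset $d(K)$, the total still fits under the stated $d_p$ and $d_q$. A secondary point to get right is that the closure chain genuinely reaches an element of $\M$ (rather than merely stopping), which is exactly the strict-monotonicity-plus-finiteness argument indicated above.
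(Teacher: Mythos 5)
Your proposal is correct and follows essentially the same route as the paper's proof: trace $K$ back through the closure algorithm along a chain of $(p,q)$-patch relations to a marked element, bound $\D(k)$ uniformly by $2^{-k/2}$ times a constant, and sum the resulting geometric series together with the parent--child offset. The only (harmless) deviation is that you use strict monotonicity of the integer levels along the chain, $\ell(\hat K_{i+1})\ge\ell(K)+i$, where the paper invokes Lemma~\ref{lma: levels change slowly} to get level increments of exactly one; your slightly looser per-step bound on $\D(k)$ still lands exactly on the stated $d_p$ and $d_q$.
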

\begin{proof}
The coefficient $\D(k)$ from Definition~\ref{df: magic patch} is bounded by 
\[\D(k)\le\bigl((p+\sqr2)\,2^{-1-k/2},\ (q+\sqr2)\,2^{-(k+1)/2}\bigr)\quad\text{for all }k\in\mathbb N.\]
Hence for $\tilde K\in\G\in\A$, any $\tilde K'\in\pq\G{\tilde K}$ satisfies 
\begin{equation}\label{eq: magic radius}
\Dist(\tilde K,\tilde K')\le2^{-\ell(\tilde K)/2}\,\Bigl(\tfrac{p+\sqr2}2,\enspace\tfrac q{\sqr2}+1\Bigr).
\end{equation}
The existence of $K\in\refine(\G,\M)\setminus\G$ means that Algorithm~\ref{alg: refinement} bisects $K'=K_J,K_{J-1},\dots,K_0$ such that $K_{j-1}\in\pq\G{K_j}$ and $\ell(K_{j-1})<\ell(K_j)$ for $j=J,\dots,1$, having $K'\in \M$ and $K\in\child(K_0)$, with `$\child$' from Definition~\ref{df: bisection}. Lemma~\ref{lma: levels change slowly} yields $\ell(K_{j-1})=\ell(K_j)-1$ for $j=J,\dots,1$, which allows for the estimate
\begin{align*}
\Dist(K',K_0)\enspace&\le\enspace \sum_{j=1}^J\Dist(K_j,K_{j-1})
\enspace \stackrel{\eqref{eq: magic radius}}\le \enspace \sum_{j=1}^J2^{-\ell(K_j)/2}\,\Bigl(\tfrac{p+\sqr2}2,\enspace\tfrac q{\sqr2}+1\Bigr) \\
&= \sum_{j=1}^J2^{-(\ell(K_0)+j)/2}\,\Bigl(\tfrac{p+\sqr2}2,\enspace\tfrac q{\sqr2}+1\Bigr) \\
&< 2^{-\ell(K_0)/2}\,\Bigl(\tfrac{p+\sqr2}2,\enspace\tfrac q{\sqr2}+1\Bigr)\sum_{j=1}^\infty2^{-j/2} \\
&= (1+\sqr2)\,2^{-\ell(K_0)/2}\,\Bigl(\tfrac{p+\sqr2}2,\enspace\tfrac q{\sqr2}+1\Bigr) \\
&= (2+2\sqr2)\,2^{-\ell(K)/2}\,\Bigl(\tfrac{p+\sqr2}2,\enspace\tfrac q{\sqr2}+1\Bigr).
\end{align*}
The estimate $\Dist(K_0,K)\le 2^{-2-\ell(K_0)/2}\,\bigl(1,\sqr2\bigr)$ and a triangle inequality conclude the proof.
\end{proof}

\begin{proof}[Proof of Theorem~\ref{thm: complexity}]\ 

\pnumpx For $K\in\tcup\A$ and $\KM\in\M\sei\M_0\cup\dots\cup\M_{J-1}$, define $\lambda(K,\KM)$ by \[\lambda(K,\KM)\sei\begin{cases}2^{(\ell(K)-\ell(\KM))/2}&\text{if }\ell(K)\le\ell(\KM)+1\text{ and }\Dist(K,\KM)\le2^{1-\ell(K)/2}(d_p,d_q),\\[.3em]0&\text{otherwise.}\end{cases}\]

\pnumpx[Main idea of the proof.]
\begin{alignat*}{2}
\left|\G_J\setminus\G_0\right| &= \mathbox[2.5em]{\sum_{K\in\G_J\setminus\G_0}}1 &&\Stackrel{\numref{sum_lambda > 1}}\le\sum_{K\in\G_J\setminus\G_0}\sum_{\KM\in\M}\lambda(K,\KM) \\
&\Stackrel{\numref{summe aller lambdas beschraenkt}}\le\sum_{\KM\in\M} C_{p,q} &&=C_{p,q}\,\sum_{j=0}^{J-1} |\M_j|.
\end{alignat*}

\pnumpx[For all $j\in\{0,\dots,J-1\}$ and $\KM\in\M_j$ holds \[\sum_{K\in\G_J\setminus\G_0}\lambda(K,\KM)\ \le\ (3+\sqr2)(4d_p+1)(4d_q+\sqr2)\ =\ C_{p,q}\ .\]]%
\label{summe aller lambdas beschraenkt}%
This is shown as follows. By definition of $\lambda$, we have
\begin{align*}
\mathbox[1cm]{\sum_{K\in\G_J\setminus\G_0}}\lambda(K,\KM)
&\le \mathbox[1cm]{\sum_{K\in\bigcup\A\setminus\G_0}}\lambda(K,\KM)\\
&= \sum_{j=1}^{\ell(\KM)+1}2^{(j-\ell(\KM))/2}\,\#\underbrace{\bigl\{K\in\tcup\A\mid\ell(K)=j\text{ and }\Dist(K,\KM)\le 2^{1-j/2}(d_p,d_q)\bigl\}}_B.
\end{align*}
Since we know by definition of the level that $\ell(K)=j$ implies $|K|=2^{-j}$, we know that $2^j\left|\tcup B\right|$ is an upper bound of $\#B$. The rectangular set $\tcup B$ is the union of all admissible elements of level $j$ having their midpoints inside an rectangle of size \[2^{2-j/2}d_p\,\times\,2^{2-j/2}d_q.\] An admissible element of level $j$ is not bigger than $2^{-j/2}\,\times\,2^{(1-j)/2}$. Together, we have \[\left|\tcup B\right|\le 2^{-j}(4d_p+1)(4d_q+\sqr2),\] and hence $\#B\le (4d_p+1)(4d_q+\sqr2)$. The claim is shown with 
\[\sum_{j=1}^{\ell(\KM)+1}2^{(j-\ell(\KM))/2}=\sum_{j=1-\ell(\KM)}^12^{j/2}<\sqr2+\sum_{j=0}^\infty2^{-j/2}=\tfrac{2\sqr2-1}{\sqr2-1}=3+\sqr2.\]

\pnumpx[Each $K\in\G_J\setminus\G_0$ satisfies \[\sum_{\KM\in\M}\lambda(K,\KM)\ \ge\ 1.\]]%
\label{sum_lambda > 1}%
Consider $K\in\G_J\setminus\G_0$. Set $j_1<J$ such that $K\in\G_{j_1+1}\setminus\G_{j_1}$. Lemma~\ref{lma: K1 in refMS => K2 in S} states the existence of $K_1\in\M_{j_1}$ with $\Dist(K,K_1)\le2^{-\ell(K)/2}(d_p,d_q)$ and $\ell(K)\le\ell(K_1)+1$. Hence $\lambda(K,K_1)=2^{\ell(K)-\ell(K_1)}>0$.
The repeated use of Lemma~\ref{lma: K1 in refMS => K2 in S} yields $j_1>j_2>j_3>\dots$ and $K_2,K_3,\dots$ with $K_{i-1}\in\G_{j_i+1}\setminus\G_{j_i}$ and $K_i\in\M_{j_i}$ such that 
\begin{equation}\label{eq: complexity -last}
\Dist(K_{i-1},K_i)\le2^{-\ell(K_{i-1})/2}(d_p,d_q)\enspace\text{and}\enspace\ell(K_{i-1})\le\ell(K_i)+1.
\end{equation}
We repeat applying Lemma~\ref{lma: K1 in refMS => K2 in S} as $\lambda(K,K_i)>0$ and $\ell(K_i)>0$, and we stop at the first index $L$ with $\lambda(K,K_L)=0$ or $\ell(K_L)=0$. 
If $\ell(K_L)=0$ and $\lambda(K,K_L)>0$, then
\[\sum_{\KM\in\M}\lambda(K,\KM)\ge\lambda(K,K_L)=2^{(\ell(K)-\ell(K_L))/2}\ge\sqr2.\]
If $\lambda(K,K_L)=0$ because $\ell(K)>\ell(K_L)+1$, then \eqref{eq: complexity -last} yields $\ell(K_{L-1})\le\ell(K_L)+1<\ell(K)$ and hence
\[\sum_{\KM\in\M}\lambda(K,\KM)\ge\lambda(K,K_{L-1})=2^{(\ell(K)-\ell(K_{L-1}))/2}\ge\sqr2.\]
If $\lambda(K,K_L)=0$ because $\Dist(K,K_L)>2^{1-\ell(K)/2}(d_p,d_q)$, then a triangle inequality shows
\[
2^{1-\ell(K)/2}(d_p,d_q)\ <\ \Dist(K,K_1)+\sum_{i=1}^{L-1}\Dist(K_i,K_{i+1}) 
\ \le\ 2^{-\ell(K)/2}(d_p,d_q)+\sum_{i=1}^{L-1} 2^{-\ell(K_i)/2}(d_p,d_q),
\]
and hence $\smash{\displaystyle 2^{-\ell(K)/2} \le\sum_{i=1}^{L-1} 2^{-\ell(K_i)/2}}$. The proof is concluded with 
\[ 1\ \le\ \sum_{i=1}^{L-1} 2^{(\ell(K)-\ell(K_i))/2}\ =\ \sum_{i=1}^{L-1} \lambda(K,K_i)\ \le\ \sum_{\KM\in\M}\lambda(K,\KM).\]\raiseqed
\end{proof}

\section{Conclusion}\label{sec: conclusions}
We presented an adaptive refinement algorithm for a subclass of analysis-suitable T-meshes that produces nested T-spline spaces, and we proved theoretical properties that are crucial for the analysis of adaptive schemes driven by a posteriori error estimators. As an example, compare the assumptions (2.9) and (2.10) in \cite{Axioms-of-Adaptivity} to Theorem~\ref{thm: complexity} and Lemma~\ref{lma: bounded overlay}, respectively.
The presented refinement algorithm can be extended to the three-dimensional case, which is our current work. The factor $C_{p,q}$ from the complexity estimate is affine in each of the parameters $p,q$ and increases exponentially with growing dimension.
We aim to apply the proposed algorithm to proof the rate-optimality of an adaptive algorithm for the numerical solution of second-order linear elliptic problems using T-splines as ansatz functions. Similar results have been proven for simple FE discretizations of the Poisson model problem in 2007 by Stevenson \cite{Stevenson}, in 2008 by Cascon, Kreuzer, Nochetto and Siebert \cite{CKNS}, and recently for a wide range of discretizations and model problems by Carstensen, Feischl, Page and Praetorius \cite{Axioms-of-Adaptivity}. 

\section*{Acknowledgements}
The authors gratefully acknowledge support by the Deutsche
Forschungsgemeinschaft in the Priority Program 1748 ``Reliable
simulation techniques in solid mechanics. Development of non-standard 
discretization methods, mechanical and mathematical
analysis'' under the project ``Adaptive isogeometric modeling of 
propagating strong discontinuities in heterogeneous materials''.


\providecommand{\bysame}{\leavevmode\hbox to3em{\hrulefill}\thinspace}
\providecommand{\MR}{\relax\ifhmode\unskip\space\fi MR }
\providecommand{\MRhref}[2]{%
  \href{http://www.ams.org/mathscinet-getitem?mr=#1}{#2}
}
\providecommand{\href}[2]{#2}

\end{document}